\theoremstyle{plain}
\newtheorem{thm}{Theorem}[section]
\newtheorem{defn}[thm]{Definition}
\newtheorem{rem}[thm]{Remark}
\newtheorem{prop}[thm]{Proposition}
\newtheorem{cor}[thm]{Corollary}
\newtheorem{lem}[thm]{Lemma}
\newtheorem{nota}[thm]{Notation}
\numberwithin{equation}{section}
\newcommand{\bigslant}[2]{{\raisebox{.2em}{$#1$}\left/\raisebox{-.2em}{$#2$}\right.}}
\newcommand{\midarrow}{\tikz \draw[-triangle 90] (0,0) -- +(.1,0);}
\newcommand{\backmidarrow}{\tikz \draw[-triangle 90] (0,0) -- +(-.1,0);}
\newcommand{\slantmidarrow}{\tikz \draw[-triangle 90] (0,0) -- +(.1,.1);}
\newcommand{\backslantmidarrow}{\tikz \draw[-triangle 90] (0,0) -- +(-.1,-.1);}
\newcommand{\opslantmidarrow}{\tikz \draw[-triangle 90] (0,0) -- +(.1,-.1);}
\newcommand{\backopslantmidarrow}{\tikz \draw[-triangle 90] (0,0) -- +(-.1,.1);}
\newcommand{\upmidarrow}{\tikz \draw[-triangle 90] (0,0) -- +(0,.1);}
\newcommand{\downmidarrow}{\tikz \draw[-triangle 90] (0,0) -- +(0,-.1);}
\begin{document}
	\begin{abstract}
		We introduce various notions of quantum symmetry in a directed or undirected multigraph with no isolated vertex and explore relations among them. If the multigraph is single edged (that is, a simple graph where loops are allowed), all our notions of quantum symmetry reduce to already existing notions of quantum symmetry provided by Bichon and Banica. Our constructions also show that any multigraph with at least two pairs of vertices with multiple edges among them possesses genuine quantum symmetry.
	\end{abstract}
	\author{Debashish Goswami}
	\thanks{It is to be noted that Debashish Goswami is partially supported by J.C. Bose national fellowship awarded by D.S.T., Government of India.}
	\address{Statistics and Mathematics Unit, Indian Statistical Institute\\ 203, B.T. Road, Kolkata 700108, India\\\textnormal{email: \texttt{debashish\_goswami@yahoo.co.in}}}

	\author{Sk Asfaq Hossain}
	\address{Statistics and Mathematics Unit, Indian Statistical Institute\\ 203, B.T. Road, Kolkata 700108, India\\\textnormal{email: \texttt{asfaq1994@gmail.com}}}

	\title{Quantum Symmetry in multigraphs (part I)}
	\maketitle
\section{Introduction}\label{Intro}
The idea of quantum groups was introduced by Drinfeld and Jimbo (\cite{Drinfelprimed1987}, \cite{Drinfelprimed1989}, \cite{Jimbo1985}). It was done on an algebraic level where quantum groups were viewed as Hopf algebras typically arising as deformations of semisimple Lie algebras. The analytic version of quantum groups was first described by Woronowicz (\cite{Woronowicz1987}, \cite{Woronowicz1998}) who formulated the notion of \textbf{compact quantum group} as a generalization of a compact topological group in the noncommutative realm.  \par 
Groups are often viewed as ``symmetry objects”, in a similar way, quantum groups
correspond to some kind of ``generalized symmetry” of physical systems and mathematical structures. Indeed, the idea of a group acting on a space can be extended to the idea of a quantum group co-acting on a non commutative space (that is, possibly a non commutative C*algebra). The question of defining and finding ``all quantum symmetries” arises naturally in this context. Study of quantum symmetry in analytic setting, that is, in the framework of compact quantum groups was started by Shuzhou Wang. In his seminal work \cite{Wang1998}, Wang introduced  notion of \textbf{quantum permutations} (in the category of compact quantum groups)  of  $n$ objects  and defined \textbf{quantum permutation group} $S^+_n$ as the universal object in the category of all such \textbf{quantum permutations}. The quantum group $S^+_n$ is indeed the compact quantum analogue of the standard permutation group $S_n$ on $n$ elements.
\par 
Following the introduction of quantum automorphisms of a finite set, a logical progression led to an investigation of the concepts of quantum automorphisms of finite graphs and small metric spaces. In (\cite{Bichon2003}), Bichon introduced a the notion of quantum automorphism in a finite directed single edged graph $(V,E)$ which was formulated in terms of simultaneous quantum permutations of both edge set $E$  and vertex set $V$. These permutations were compatible through source and target maps of the directed graph. Here by \textbf{single edged}, we mean a \textbf{simple} graph where loops are allowed. A loop is an edge with a single endpoint vertex. Two years later, in \cite{Banica2005} Banica gave a more general description of quantum symmetry in a single edged graph in terms of its adjacency matrix. Any quantum permutation of vertex set which commutes with the adjacency matrix is a quantum automorphism of the single edged graph in Banica's sense. As there was absolutely no restriction on the entries of the adjacency matrix, this construction was generalised easily to produce quantum automorphisms in the context of  weighted graphs and small metric spaces (\cite{Banica2005a}). For a single edged graph $(V,E)$, the categories of quantum automorphisms described by Bichon and Banica will be denoted as $\mathcal{D}^{Ban}_{(V,E)}$ and $\mathcal{D}^{Bic}_{(V,E)}$ respectively. It turned out that $\mathcal{D}^{Bic}_{(V,E)}$ is always a full subcategory of $\mathcal{D}^{Ban}_{(V,E)}$.
\par

It is natural to ask whether Banica and Bichon's notions of quantum automorphisms can be generalised in the context of multigraphs. A multigraph or a finite quiver  $(V,E)$ consists of a finite vertex set $V$ and a finite edge set $E$ with source and target maps $s:E\rightarrow V$ and $t:E\rightarrow V$. Classically an automorphism of a multigraph is pair $(f_V, f_E)$ where $f_V$ and $f_E$ are permutations of vertex set and edge set respectively which are compatible via source and target maps $s$ and $t$. In single edged case, the formulations of quantum symmetry were done in terms of permutation of vertices and adjacency relations between two vertices. Different technique needs to be adapted for multigraphs as an edge is not uniquely determined by the adjacency relations.
\par 
We have reformulated  the notions of quantum symmetry in terms of ``permutations" of edges instead of permutations of vertices which is useful in the context of multigraphs.
\par 
For a multigraph $(V,E)$, we have constructed three different categories  $\mathcal{C}^{Ban}_{(V,E)}$, $\mathcal{C}^{sym}_{(V,E)}$ and $\mathcal{C}^{Bic}_{(V,E)}$ consisting of compact quantum groups co-acting by preserving different levels of quantum symmetry in $(V,E) $.
If $(V,E)$ is single edged, it turns out that $\mathcal{C}^{Ban}_{(V,E)}= \mathcal{C}^{sym}_{(V,E)}\cong \mathcal{D}^{Ban}_{(V,E)}$ and $\mathcal{C}^{Bic}_{(V,E)}\cong \mathcal{D}^{Bic}_{(V,E)}$. For a multigraph $(V,E)$, we have the following :
\begin{equation*}
	\mathcal{C}^{Bic}_{(V,E)}\subseteq \mathcal{C}^{sym}_{(V,E)} \subseteq \mathcal{C}^{Ban}_{(V,E)}.
\end{equation*}
It can be easily seen that the categories  $\mathcal{C}^{Bic}_{(V,E)}$ and $\mathcal{C}^{Ban}_{(V,E)}$  admit universal objects namely $Q^{Bic}_{(V,E)}$ and $Q^{Ban}_{(V,E)}$. However that is not the case for   $\mathcal{C}^{sym}_{(V,E)}$. It is still unclear whether for an arbitrary multigraph $(V,E)$, the category $\mathcal{C}^{sym}_{(V,E)}$ admits a universal object or not.
The compact quantum group $Q^{Bic}_{(V,E)}$ is the \textbf{quantum automorphism group} of $(V,E)$ which is a  quantum analogue of the  classical automorphism group of $(V,E)$. On the other hand, $Q^{Ban}_{(V,E)}$ is too large to be called an automorphism group of $(V,E)$ and therefore will be referred to as \textbf{universal quantum group associated with $(V,E)$}. This is precisely the reason for considering a smaller category $\mathcal{C}^{sym}_{(V,E)}$ to make a true generalisation of Banica's notion of quantum symmetry.
\par 
It is natural to ask for which class of multigraphs, the two categories $\mathcal{C}^{sym}_{(V,E)}$ and $\mathcal{C}^{Bic}_{(V,E)}$ coincide. We have provided a necessary and sufficient condition in terms of weighted symmetry of the \textbf{underlying weighted graph}. We have shown that the categories $\mathcal{C}^{sym}_{(V,E)}$ and $\mathcal{C}^{Bic}_{(V,E)}$ coincide if and only if Banica and Bichon's notion of quantum symmetry coincide for the \textbf{underlying weighted graph} of $(V,E)$. For this class of multigraphs, the compact quantum group $Q^{Bic}_{(V,E)}$ does act as a universal object in $\mathcal{C}^{sym}_{(V,E)}$. For uniform multigraphs, that is, a multigraph with either zero or fixed number of edges between two vertices,  We have expressed $Q^{Bic}_{(V,E)}$ as \textbf{free wreath product by quantum permutation groups} (\cite{Bichon2004}, \cite{Banica2007a}) where the co-action corresponding to the wreath product comes from a permutation of pairs of vertices induced by the weighted symmetry of the \textbf{underlying single edged graph}. This wreath product formula for $Q^{Bic}_{(V,E)}$ also emphasizes that  any multigraph which have at least two pairs of vertices with multiple edges among them possesses genuine quantum symmetry.
\par 
There has been an extensive study of quantum symmetry in \textbf{graph C* algebras} in recent times (see \cite{Banica2013},\cite{Joardar2018}, \cite{Schmidt2018},\cite{Brannan2022} and references therein). Following the line of \cite{Schmidt2018}  we have shown that our notions of quantum symmetry in multigraphs in fact lift to the level of graph C* algebras. Apart from mathematical structures, multigraphs are also important in many physical models
such as lattices of atoms with double or triple bonds.
\par 
Now we briefly discuss layout of this paper. In section  \ref{chap_prelim} we provide necessary prerequisites about graphs, compact quantum groups and co-actions of compact quantum groups on single edged graphs. We also introduce a set of important notations at the end of this section that we will be using throughout this article. In section \ref{chap_quan_sym_direc_mult}, we observe equivalent descriptions of right and left equivariant bi-unitary co-representations on the edge space which essentially give us the formulas to capture permutations of the vertex set in terms of ``permutation" of edges (see theorems \ref{bi-mod_equiv_source}, \ref{bi-mod_equiv_targ} and \ref{source_targ_consis_thm}). Using these revelations, we introduce various notions of quantum symmetry in a multigraph. Various compact quantum groups associated with a multigraph are also described here (see definition \ref{Q^Ban_def}). To overcome inadequacy of $Q^{Ban}_{(V,E)}$ to act as an automorphism group of the multigraph and  make true generalisation of Banica's notion of quantum symmetry we introduce the notion of ``restricted orthogonality" (see definition \ref{maindef_symmetric}) and explore its various consequences. One important consequence would be that any action satisfying ``restricted orthogonality" preserves uniform components of a multigraph (see proposition \ref{direct_sum}). The wreath product formula related to the quantum automorphism group of a multigraph In Bichon's sense is also described in this section. Section \ref{graphC*algebra} is dedicated to quantum symmetry of graph C* algebras associated with multigraphs. In Section \ref{quan_sym_undir_mult} we talk about ``undirected" multigraphs and briefly discuss how our work in directed setting can be used to describe quantum symmetry in ``undirected" multigraphs.  
	
	\section{\textbf{Preliminaries}}\label{chap_prelim}
	\subsection{Finite quivers or multigraphs:}\label{graphs}
	We recall the notions of finite quivers and morphisms among them. For more details on quivers and path algebras see \cite{Grigoryan2018}.	 
	\begin{defn}
		A \textbf{finite quiver} or a \textbf{multigraph} $(V,E)$ consists of a finite set of vertices $V$ and a finite set of edges $E$ with  source and  target maps $s:E\rightarrow V$ and $t:E\rightarrow V$.
		\par
		An edge $\tau\in E$ is called a \textbf{loop} if $s(\tau)=t(\tau)$. We will denote $L\subseteq E$ to be the set of all ``loops" in $(V,E).$ 
		\par 
		The adjacency matrix $W=(W^i_j)_{i,j\in V}$ is given by $W^i_j=|\{\tau\in E|s(\tau)=i,t(\tau)=j\}|$. Here $|.|$ denotes cardinality of a set.
		\par 
	\end{defn}
\begin{defn}
	A multigraph $(V,E)$ is called a \textbf{single edged graph} if $W^i_j=1$ or $0$ for all $i,j\in V$. In case of a single edged graph, the edge set $E$ can be identified with a subset of $V\times V$. \par 
	A \textbf{weighted single edged graph} $(V,E,w)$ is a single edged graph $(V,E)$ with a weight function $w:E\rightarrow \mathbb{C}$ on the set of edges. In this case, the adjacency matrix is defined to be $W^i_j=w((i,j))$ if $(i,j)\in E$ and $0$ otherwise.  
\end{defn}
	
	\begin{defn}\label{under_simp}
		For a multigraph $(V,E)$, the \textbf{underlying single edged graph} $(V,\overline{E})$ is the single edged graph with same vertex set $V$ and a set of edges $\overline{E}$ given by, $$\overline{E}:=\{(i,j)\in V\times V|W^i_j\neq 0\}.$$ 
	\end{defn}
	\begin{defn}\label{under_weighted_simp}
		For a multigraph $(V,E)$, the \textbf{underlying weighted single edged graph} is the underlying single edged graph $(V,\overline{E})$ with a weight function $w:\overline{E}\rightarrow \mathbb{C}$ defined by $w((i,j))=|E^i_j|$. 
	\end{defn}
	\subsection*{\textbf{Morphisms of finite quivers or multigraphs:}}
	We recall definition 2.3 from \cite{Grigoryan2018}. For more detailed discussion on different automorphisms of a multigraph, see also \cite{Gela1982}.
	\begin{defn}\label{class_aut_mult_def}
		Let $(V,E)$ and $(V',E')$ be two finite quivers or multigraphs with pairs of source and target maps given by $(s,t)$ and $(s',t')$ respectively. A \textbf{morphism of quivers} $f:(V,E)\rightarrow (V',E')$ is a pair of maps $(f_V,f_E)$ where $f_V:V\rightarrow V'$ is a map of vertices and $f_E:E\rightarrow E'$ is a map of edges satisfying,
		\begin{equation*}
			f_V(s(\tau))=s'(f_E(\tau))\quad\text{and}\quad f_V(t(\tau))=t'(f_E(\tau))\quad\text{for all}\quad\tau\in E.
		\end{equation*}
	An \textbf{automorphism of a finite quiver  or a multigraph} $(V,E)$ is an invertible morphism from  $(V,E)$ to $(V,E)$. The collection of all such automorphisms is the \textbf{classical automorphism group} of $(V,E)$ and is denoted as $G^{aut}_{(V,E)}$. 
	\end{defn}
	
	\subsection{Compact quantum group:}\label{CQG}
	We give a brief description of compact quantum groups and related concepts. For detailed discussion on quantum groups, see \cite{Chari1995}, \cite{Maes1998},  \cite{Timmermann2008}, \cite{Neshveyev2013} and \cite{Goswami2016}. All C* algebras here will be assumed to be unital and all tensor products will be minimal tensor product of C* algebras unless explicitly mentioned otherwise.
	\begin{defn}
		A compact quantum group or a CQG (in short) is a pair $(\mathcal{A},\Delta)$ where $\mathcal{A}$ is a unital C* algebra and $\Delta:\mathcal{A}\rightarrow\mathcal{A}\otimes\mathcal{A}$ is a homomorphism of C* algebras satisfying the following conditions:
		\begin{enumerate}
			\item $(\Delta\otimes id)\Delta=(id\otimes \Delta)\Delta$ (coassociativity).
			\item Each of the linear spans of $\Delta(\mathcal{A})(1\otimes\mathcal{A})$ and $\Delta(\mathcal{A})(\mathcal{A}\otimes 1)$ is norm-dense in $\mathcal{A}\otimes\mathcal{A}$.
		\end{enumerate} 
	\end{defn}
	It is known that there exists a unique Haar state on a compact quantum group which is the non-commutative analogue of Haar measure on a classical compact group.
	\begin{defn}
		The Haar state $h$  on a compact quantum group $(\mathcal{A},\Delta)$ is the unique state on $\mathcal{A}$ which satisfies the following conditions:
		\begin{equation*}
			(h\otimes id)\Delta(a)=h(a)1_{\mathcal{A}} \quad \text{and}\quad (id\otimes h)\Delta(a)=h(a)1_{\mathcal{A}}
		\end{equation*}
		for all $a\in \mathcal{A}$.
	\end{defn}
	\begin{defn}
		A quantum group homomorphism $\Phi$ among two compact quantum groups $(\mathcal{A}_1,\Delta_1)$ and $(\mathcal{A}_2,\Delta_2)$ is a C* algebra homomorphism $\Phi:\mathcal{A}_1\rightarrow\mathcal{A}_2$ satisfying the following condition:
		\begin{equation*}
			(\Phi\otimes\Phi)\circ\Delta_1=\Delta_2\circ\Phi.
		\end{equation*}

	\end{defn}
	\begin{defn}
		A Woronowicz C* subalgebra of a compact quantum group $(\mathcal{A},\Delta)$ is a C* subalgebra $\mathcal{A}'$ such that $(\mathcal{A}',\Delta|_{\mathcal{A}'})$ is a compact quantum group and the inclusion map $i:\mathcal{A}'\rightarrow \mathcal{A}$ is a homomorphism of compact quantum groups. 
	\end{defn}
	\begin{defn}
		A Woronowicz C* ideal of a compact quantum group $(\mathcal{A},\Delta)$ is a two sided C*ideal $\mathcal{I}$ such that $\Delta(\mathcal{I})\subseteq ker(\pi\otimes \pi)$ where $\pi$ is the natural quotient map $\pi:\mathcal{A}\rightarrow \mathcal{A}/\mathcal{I}$.
		
	\end{defn}
	\begin{prop}
		The quotient of a compact quantum group $(\mathcal{A},\Delta)$ by a Woronowicz C* ideal $\mathcal{I}$ has a unique compact quantum group structure such that the quotient map $\pi$ is a homomorphism of compact quantum groups. More precisely, the co-product $\tilde{\Delta}$ on $\mathcal{A}/\mathcal{I}$ is given by,
		\begin{equation*}
			\tilde{\Delta}(a+\mathcal{I})=(\pi\otimes \pi)\Delta(a) 
		\end{equation*}
		where $a\in \mathcal{A}$.
	\end{prop}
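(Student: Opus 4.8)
The plan is to let $\tilde{\Delta}$ be the map dictated by the displayed formula and then check the two compact quantum group axioms, the argument being essentially a diagram chase in which the hypothesis on $\mathcal{I}$ enters at exactly one place. Since $\mathcal{I}$ is a closed two sided ideal, $\mathcal{A}/\mathcal{I}$ is a unital C* algebra and $\pi$ is a surjective unital $*$-homomorphism. The crucial observation is that the composite $(\pi\otimes\pi)\circ\Delta\colon\mathcal{A}\to(\mathcal{A}/\mathcal{I})\otimes(\mathcal{A}/\mathcal{I})$ is a $*$-homomorphism that annihilates $\mathcal{I}$ --- which is precisely the condition $\Delta(\mathcal{I})\subseteq\ker(\pi\otimes\pi)$ defining a Woronowicz C* ideal --- so by the universal property of the quotient it factors uniquely as $(\pi\otimes\pi)\circ\Delta=\tilde{\Delta}\circ\pi$ through a unital $*$-homomorphism $\tilde{\Delta}\colon\mathcal{A}/\mathcal{I}\to(\mathcal{A}/\mathcal{I})\otimes(\mathcal{A}/\mathcal{I})$, necessarily given by $\tilde{\Delta}(a+\mathcal{I})=(\pi\otimes\pi)\Delta(a)$. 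This single step establishes simultaneously that $\tilde{\Delta}$ is well defined, that it is a $*$-homomorphism, and uniqueness: if $\Delta'$ is any $*$-homomorphism on $\mathcal{A}/\mathcal{I}$ making $\pi$ a morphism of compact quantum groups, then $\Delta'\circ\pi=(\pi\otimes\pi)\circ\Delta=\tilde{\Delta}\circ\pi$, and surjectivity of $\pi$ forces $\Delta'=\tilde{\Delta}$.

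Next I would verify coassociativity of $\tilde{\Delta}$. Because $\pi$ is onto, it suffices to test the identity on elements $\pi(a)$, $a\in\mathcal{A}$; applying the relation $\tilde{\Delta}\circ\pi=(\pi\otimes\pi)\circ\Delta$ twice yields
\[
(\tilde{\Delta}\otimes id)\tilde{\Delta}(\pi(a))=(\pi\otimes\pi\otimes\pi)(\Delta\otimes id)\Delta(a),\qquad
(id\otimes\tilde{\Delta})\tilde{\Delta}(\pi(a))=(\pi\otimes\pi\otimes\pi)(id\otimes\Delta)\Delta(a),
\]
and the right-hand sides coincide by coassociativity of $\Delta$.

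For the two density (cancellation) conditions I would first recall that $\pi\otimes\pi\colon\mathcal{A}\otimes\mathcal{A}\to(\mathcal{A}/\mathcal{I})\otimes(\mathcal{A}/\mathcal{I})$ is again surjective for the minimal tensor product: its range is a C* subalgebra, hence norm closed, and it contains the algebraic tensor product $\pi(\mathcal{A})\odot\pi(\mathcal{A})$, which is dense in $(\mathcal{A}/\mathcal{I})\otimes(\mathcal{A}/\mathcal{I})$. Being continuous and onto, $\pi\otimes\pi$ therefore maps dense subsets onto dense subsets. Since $(\pi\otimes\pi)\big(\Delta(a)(1\otimes b)\big)=\tilde{\Delta}(\pi(a))\,(1\otimes\pi(b))$ for all $a,b\in\mathcal{A}$, the image of the dense subspace $\Delta(\mathcal{A})(1\otimes\mathcal{A})\subseteq\mathcal{A}\otimes\mathcal{A}$ under $\pi\otimes\pi$ is exactly $\tilde{\Delta}(\mathcal{A}/\mathcal{I})(1\otimes\mathcal{A}/\mathcal{I})$, which is thus dense in $(\mathcal{A}/\mathcal{I})\otimes(\mathcal{A}/\mathcal{I})$; the argument for $\Delta(\mathcal{A})(\mathcal{A}\otimes 1)$ is word for word the same. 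Hence $(\mathcal{A}/\mathcal{I},\tilde{\Delta})$ is a compact quantum group, and $\pi$ intertwines $\Delta$ and $\tilde{\Delta}$ by the very definition of the latter, so $\pi$ is a homomorphism of compact quantum groups.

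I expect the one point deserving genuine care to be the surjectivity of $\pi\otimes\pi$ on the minimal tensor product invoked above; this is the standard functoriality of the minimal tensor product under surjective $*$-homomorphisms, and once it is granted everything else is formal manipulation with the intertwining relation $\tilde{\Delta}\circ\pi=(\pi\otimes\pi)\circ\Delta$.
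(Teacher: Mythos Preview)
Your proof is correct and complete. The paper states this proposition as a preliminary fact from the general theory of compact quantum groups and does not supply a proof of its own, so there is nothing to compare against; your argument is the standard verification, and you have correctly isolated the one nontrivial point (surjectivity of $\pi\otimes\pi$ on the minimal tensor product) and handled it properly.
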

	\begin{defn}
		A compact quantum group $(\mathcal{A}',\Delta')$ is said to be quantum subgroup of another compact quantum group $(\mathcal{A},\Delta)$ if there exists a Woronowicz C* ideal $\mathcal{I}$ such that $(\mathcal{A}',\Delta')\cong(\mathcal{A},\Delta)/\mathcal{I}$.
	\end{defn}
	\subsubsection{\textbf{Co-actions and co-representations:}}
	\begin{defn}
		Let $H$ be a finite dimensional Hilbert space and $(\mathcal{A},\Delta)$ be a compact quantum group. We consider the Hilbert $\mathcal{A}$-module $H\otimes\mathcal{A}$ with induced $\mathcal{A}$-valued inner product from $H$. A finite dimensional co-representation of $(\mathcal{A},\Delta)$ on $H$ is a $\mathbb{C}$-linear map $\delta:H\rightarrow H\otimes \mathcal{A}$ such that $\tilde{\delta}\in B(H)\otimes \mathcal{A}$ given by $\tilde{\delta}(\xi\otimes a)=\delta(\xi)a$ ($\xi\in H$,$a\in\mathcal{A}$) satisfies the following condition:
		\begin{equation*}
			(id\otimes \Delta)\tilde{\delta}=\tilde{\delta}_{(12)}\tilde{\delta}_{(13)}
		\end{equation*}
		where $\tilde{\delta}_{(12)}$ and $\tilde{\delta}_{(13)}$ are common leg notations defined in section 5 of \cite{Maes1998}.
		\begin{rem}
			By choosing  an orthonormal basis $\{e_1,..,e_n\}$ of $H$ we can identify $H$ with $\mathbb{C}^n$ and $B(H)$ with $M_n(\mathbb{C})$. For a $\mathbb{C}$-linear map $\delta:H\rightarrow H\otimes \mathcal{A}$, we define $U^\delta\in M_n(\mathbb{\mathcal{A}})$ by    $(U^{\delta})_{ij}=<e_i\otimes 1_{\mathcal{A}},\delta(e_j)>_{\mathcal{A}}$. It is clear that $\delta$ is uniquely determined by the matrix $U^{\delta}$ and is a co-representation if and only if $$\Delta(U^\delta_{ij})=\sum_{k=1}^nU^\delta_{ik}\otimes U^\delta_{kj}.$$ $U^{\delta}$ is said to be the \textbf{co-representation matrix} of $\delta$.  Later in this article we might also write the coefficients of a co-representation matrix as $(U^{\delta})^i_j$ instead of $(U^{\delta})_{ij}$ for notational ease and convenience. 
			\par 
			A co-representation $\delta$ is said to be \textbf{non-degenerate} if $U^{\delta}$ is invertible in $M_n(\mathcal{A})$ and \textbf{unitary} if the matrix $U^{\delta}$ is unitary in $M_n(\mathcal{A})$, that is, $U^{\delta}{U^{\delta}}^*={U^{\delta}}^*U^{\delta}=Id_{M_n(\mathcal{A})}$. 
		\end{rem} 	
	\end{defn}

	\begin{defn}
		For a finite dimensional co-representation $\delta$ of a compact quantum group $(\mathcal{A},\Delta)$ the contragradient co-representation $\overline{\delta}$ is  defined by the co-representation matrix $\overline{U^{\delta}}$, where $\overline{U^{\delta}_{ij}}={U^{\delta}_{ij}}^*$. \par 
    As we have identified co-representations with operator valued matrices and will be working only with finite dimensional co-representations, we will consider contragradient representation on the same finite dimensional Hilbert space instead of its dual. 
	\end{defn}
	It is known from representation theory of compact quantum groups that for a compact quantum group $(\mathcal{A},\Delta)$, there is a dense subalgebra $\mathcal{A}_0$ generated by the matrix elements of its finite dimensional co-representations. This subalgebra $\mathcal{A}_0$ with the co-product $\Delta|_{\mathcal{A}_0}$ is a Hopf * algebra in its own right and referred to as \textbf{underlying Hopf * algebra of matrix elements} of $(\mathcal{A},\Delta)$. The Haar state $h$ is faithful on $\mathcal{A}_0$ and is tracial if $(\mathcal{A},\Delta)$ is a compact quantum group of Kac type (see proposition 1.7.9 in \cite{Neshveyev2013} ).
	\par 
	Now we describe the notion of a co-action of a compact quantum group on a unital C* algebra.
	\begin{defn}
		Let $\mathcal{B}$ be a unital C* algebra. A co-action of a compact quantum group $(\mathcal{A},\Delta)$ on $\mathcal{B}$ is a C* homomorphism $\alpha:\mathcal{B}\rightarrow \mathcal{B}\otimes\mathcal{A}$ satisfying the following conditions:
		\begin{enumerate}
			\item $(\alpha\otimes id)\alpha=(id\otimes\Delta)\alpha$.
			\item Linear span of $\alpha(\mathcal{B})(1_{\mathcal{B}}\otimes\mathcal{A})$ is norm-dense in $\mathcal{B}\otimes\mathcal{A}$. 
		\end{enumerate}
		A co-action $\alpha$ is said to be \textbf{faithful} if there does not exist a proper Woronowicz C* algebra $\mathcal{A}'$ of $(\mathcal{A},\Delta)$ such that $\alpha$ is also a co-action of $(\mathcal{A}',\Delta|_{\mathcal{A}'})$ on $\mathcal{B}$.
	\end{defn}
	For a unital C* algebra $\mathcal{B}$, we consider the category of quantum transformation groups whose objects are compact quantum groups co-acting on $\mathcal{B}$ and morphisms are quantum group homomorphisms intertwining such co-actions. The universal object in this category, if it exists (it might not, for example see \cite{Wang1998} for example), is said to be \textbf{quantum automorphism group} of $\mathcal{B}$. The following proposition will be crucial to our constructions later on.
	
	\begin{prop}\label{adj_co-action}
		For a finite dimensional unitary co-representation $\delta:H\rightarrow H\otimes \mathcal{A}$ of a compact quantum group $(\mathcal{A},\Delta)$, there is a co-action $Ad_{\delta}:B(H)\rightarrow B(H)\otimes \mathcal{A}$ of $(\mathcal{A},\Delta)$ on the algebra  $B(H)$ (set of all bounded operators on a Hilbert space $H$) which is given by,
		\begin{equation*}
			Ad_{\delta}(T)=\tilde{\delta} (T\otimes 1_{\mathcal{A}}) \tilde{\delta}^*\quad\text{where}\quad T\in B(H).
		\end{equation*}
		The map $Ad_{\delta}$ will be referred as the ``co-action implemented by a unitary co-representation $\delta$".
	\end{prop}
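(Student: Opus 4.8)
The plan is to verify, for the map $Ad_\delta$, the three defining properties of a co-action: that it is a unital C* homomorphism into $B(H)\otimes\mathcal{A}$, that it is coassociative, and that it satisfies the density condition. Because $H$ is finite dimensional, $B(H)\otimes\mathcal{A}$ is naturally identified with the C* algebra of all adjointable operators on the Hilbert $\mathcal{A}$ module $H\otimes\mathcal{A}$, and the hypothesis that $\delta$ is a \emph{unitary} co-representation says precisely that $\tilde{\delta}$ is a unitary element of $B(H)\otimes\mathcal{A}$. Hence $Ad_\delta(T)=\tilde{\delta}(T\otimes 1_{\mathcal{A}})\tilde{\delta}^{*}$ again lies in $B(H)\otimes\mathcal{A}$ for every $T\in B(H)$, and $Ad_\delta$, being conjugation by the unitary $\tilde{\delta}$, is automatically a unital $*$-homomorphism (unitality being the relation $\tilde{\delta}\tilde{\delta}^{*}=1$).

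The main computation is coassociativity. Write $v:=\tilde{\delta}$ and work inside $B(H)\otimes\mathcal{A}\otimes\mathcal{A}$ with the $B(H)$ factor as the first leg, so that $v_{12}$ and $v_{13}$ denote $v$ placed on the $B(H)$ leg together with, respectively, the first and the second copy of $\mathcal{A}$. Using that $id\otimes\Delta$ is a $*$-homomorphism, that $\Delta(1_{\mathcal{A}})=1_{\mathcal{A}}\otimes 1_{\mathcal{A}}$, and the co-representation identity $(id\otimes\Delta)(v)=v_{12}v_{13}$, one obtains
\[
(id\otimes\Delta)\bigl(Ad_\delta(T)\bigr)=(id\otimes\Delta)(v)\,(T\otimes 1\otimes 1)\,(id\otimes\Delta)(v^{*})=v_{12}v_{13}\,(T\otimes 1\otimes 1)\,v_{13}^{*}v_{12}^{*}.
\]
On the other hand, a short unwinding gives $(Ad_\delta\otimes id)(X)=v_{12}\,X_{13}\,v_{12}^{*}$ for any $X\in B(H)\otimes\mathcal{A}$, where $X_{13}$ denotes $X$ regarded inside $B(H)\otimes\mathcal{A}\otimes\mathcal{A}$ through legs $1$ and $3$; applying this with $X=Ad_\delta(T)=v(T\otimes 1)v^{*}$, so that $X_{13}=v_{13}(T\otimes 1\otimes 1)v_{13}^{*}$, one again lands on $v_{12}v_{13}\,(T\otimes 1\otimes 1)\,v_{13}^{*}v_{12}^{*}$. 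Thus $(Ad_\delta\otimes id)\circ Ad_\delta=(id\otimes\Delta)\circ Ad_\delta$. (A reader preferring to avoid leg bookkeeping can instead expand $Ad_\delta(e_{pq})=\sum_{i,k}e_{ik}\otimes U^\delta_{ip}\,{U^\delta_{kq}}^{*}$ in matrix units and, using $\Delta(U^\delta_{ij})=\sum_k U^\delta_{ik}\otimes U^\delta_{kj}$, check that both composites send $e_{pq}$ to $\sum_{i,k,m,n}e_{ik}\otimes U^\delta_{im}\,{U^\delta_{kn}}^{*}\otimes U^\delta_{mp}\,{U^\delta_{nq}}^{*}$.)

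For the density condition I would pass to the underlying Hopf $*$-algebra $\mathcal{A}_0$ of matrix elements of $(\mathcal{A},\Delta)$. Since the coefficients $U^\delta_{ip}\,{U^\delta_{kq}}^{*}$ all lie in $\mathcal{A}_0$, the homomorphism $Ad_\delta$ in fact maps $B(H)$ into $B(H)\otimes\mathcal{A}_0$; combined with the coassociativity just proved and the counit relation $(id\otimes\epsilon)\circ Ad_\delta=id$ — a consequence of the co-representation identity, which yields $(id\otimes\epsilon)(\tilde{\delta})=1$, i.e.\ $\epsilon(U^\delta_{ij})=\delta_{ij}$ — this exhibits $Ad_\delta$ as an algebraic co-action of the Hopf $*$-algebra $\mathcal{A}_0$ on $B(H)$. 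For an algebraic co-action the density is formal: writing $Ad_\delta(T)=\sum T_{(0)}\otimes T_{(1)}$ in Sweedler notation and letting $S$ be the antipode of $\mathcal{A}_0$, the element $\sum Ad_\delta(T_{(0)})\bigl(1_{B(H)}\otimes S(T_{(1)})a\bigr)$ reduces to $T\otimes a$ after one application of coassociativity followed by the antipode and counit axioms. Hence $\mathrm{span}\{\,Ad_\delta(B(H))\,(1_{B(H)}\otimes\mathcal{A}_0)\,\}=B(H)\otimes\mathcal{A}_0$, and therefore the norm closure of $\mathrm{span}\{\,Ad_\delta(B(H))\,(1_{B(H)}\otimes\mathcal{A})\,\}$ equals $B(H)\otimes\mathcal{A}$, using that $\mathcal{A}_0$ is dense in $\mathcal{A}$ and $H$ is finite dimensional.

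I expect the first step to be immediate and the coassociativity computation to be purely mechanical once the leg placements are fixed. The step needing genuine care — the main obstacle — is density: one cannot shortcut it via an ``orthogonality of the rows and columns of $U^\delta$'' argument, because unitarity of $U^\delta$ only provides $U^\delta{U^\delta}^{*}={U^\delta}^{*}U^\delta=1$, and for a non-Kac $(\mathcal{A},\Delta)$ the entrywise conjugate of a unitary co-representation matrix need not be invertible; so the antipode of $\mathcal{A}_0$ (equivalently, the counit identity above) is genuinely used.
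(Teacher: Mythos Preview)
The paper states this proposition in its preliminaries without proof, treating it as a standard fact from the theory of compact quantum groups. Your proof is correct: conjugation by the unitary $\tilde\delta$ is manifestly a unital $*$-homomorphism, your leg-notation verification of coassociativity is clean, and the antipode/counit argument for density via the dense Hopf $*$-subalgebra $\mathcal{A}_0$ is valid.

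One small correction to your closing remark. For \emph{any} compact quantum group and any finite-dimensional unitary co-representation matrix $U^\delta$, the conjugate matrix $\overline{U^\delta}$ is always invertible in $M_n(\mathcal{A})$ --- what may fail in the non-Kac case is unitarity of $\overline{U^\delta}$, not invertibility (there is always a positive invertible $F$ with $F\,\overline{U^\delta}\,F^{-1}$ unitary). This actually does give a shortcut for density: the coefficient matrix of $Ad_\delta$ with respect to the matrix-unit basis $\{e_{pq}\}$ of $B(H)$ is $(U^\delta\otimes I)(I\otimes\overline{U^\delta})\in M_{n^2}(\mathcal{A})$, a product of two invertibles and hence invertible; invertibility of the coefficient matrix already yields $\mathrm{span}\{Ad_\delta(B(H))(1\otimes\mathcal{A})\}=B(H)\otimes\mathcal{A}$ on the nose. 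Your antipode route is nonetheless correct and more conceptual, so this is only a comment on the side remark, not on the proof itself.
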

	\subsection{Quantum automorphisms of single edged (weighted or non-weighted) graphs:}\label{Qaut_simp_graph}
	There are two different existing notions of quantum symmetry in a simple graph, one was introduced by Bichon  (see \cite{Bichon2003}) and the other was introduced by Banica (\cite{Banica2005},\cite{Banica2007}). Before going to that, we recall the notion of quantum permutation group from \cite{Wang1998}.\par  
	Let $X$ be a finite set. For $i\in X$, let us denote the characteristic function on $i$ as $\chi_i$, that is, $\chi_i(j)=\delta_{i,j}$ ($\delta_{i,j}=1$ if $i=j$ and $0$ otherwise) for all $j\in X$. The function algebra on $X$, that is, set of all functions from $X$ to $\mathbb{C}$,  is the $\mathbb{C}$-linear span of the elements $\{\chi_i|i\in X\}$. This function algebra will be treated as both an algebra (with multiplication given by, $\chi_i.\chi_j=\delta_{i,j}$)  and a Hilbert space (with inner product given by, $<\chi_i,\chi_j>=\delta_{i,j}$). We will denote this function algebra by $C(X)$ when we will treat it as an algebra and $L^2(X)$ when we will treat it as a Hilbert space.    
	\begin{defn}\label{quan_perm_def}
		Let $X_n=\{1,2,..,n\}$ be a finite set. The \textbf{quantum permutation group}  on $n$ elements, $S^+_n$ is the universal C* algebra generated by the elements of the matrix $(x_{ij})_{i,j=1,..,n}$ satisfying the following relations:
		\begin{enumerate}
			\item ${x^2_{ij}}=x_{ij}={x_{ij}}^*$ for all $i,j=1,..,n$.
			\item $\sum_{i=1}^nx_{ij}=1=\sum_{i=1}^n{x_{ji}}$ for all $j=1,..,n$.
		\end{enumerate}
		The co-product $\Delta_n$ on  $S^+_n$ is given by
		$\Delta_n(x_{ij})=\sum_{k=1}^{n}x_{ik}\otimes x_{kj}.$ 
	\end{defn}
	
	The relations (1) and (2) listed in definition \ref{quan_perm_def} will be referred to as \textbf{quantum permutation relations}. The quantum permutation group $S^+_n$ is the universal object in the category of all compact quantum groups co-acting on $C(X_n)$.
	
	We introduce a notation which is standard in this context: 
	
	\begin{nota}\label{defn_alpha^2}

		Let $X$ be a finite set and $\alpha:C(X)\rightarrow C(X)\otimes \mathcal{A}$ be a co-action of a compact quantum group $(\mathcal{A},\Delta)$ with co-representation matrix $Q=(q_{ij})_{i,j\in V}$. Then we define $\alpha^{(2)}=(id\otimes id\otimes m)(id\otimes \Sigma_{23}\otimes id)(\alpha\otimes \alpha)$ where $m$ is the multiplication map in $\mathcal{A}$ and $\Sigma_{23}$ is the standard flip map on 2nd and 3rd coordinates of the tensor product. For $i,j,k,l\in V$, we observe that,
		\begin{equation*}
			\alpha^{(2)}(\chi_k\otimes\chi_l)=\sum_{i,j\in V}\chi_i\otimes\chi_j\otimes q_{ik}q_{jl}.
		\end{equation*}
		It is easy to check using quantum permutation relations that $\alpha^{(2)}$  is actually a unitary co-representation of $(\mathcal{A},\Delta)$ on the Hilbert space $L^2(X)\otimes L^2(X)$. Its contragradient co-representation $\overline{\alpha^{(2)}}$ is also unitary.
	\end{nota}
We recall theorem 2.2 from \cite{Banica2005}.
\begin{thm}\label{wt_preserve}
	Let $\alpha$ be a co-action of a CQG $(\mathcal{A},\Delta)$ on $C(X_n)$ with co-representation matrix $Q=(q_{ij})_{i,j=1,..,n}$ and $W\in M_n(\mathbb{C})$ be a complex valued $n\times n$ matrix. Let us write $W=\sum_{c\in \mathbb{C}}W^c$ where $W^c_{ij}=1$ iff $W_{ij}=c$ and $W^c_{ij}=0$ otherwise. For $c\in\mathbb{C}$, we consider the linear subspace $K^c$ of $L^2(X)\otimes L^2(X)$ defined by
	\begin{equation*}
	K^c=\text{linear span}\{\chi_k\otimes \chi_l|W_{kl}=c;\:\:k,l\in V\}.
	\end{equation*}
	Then the following conditions are equivalent:
	\begin{enumerate}
		\item $QW=WQ$.
		\item $\alpha^{(2)}(K^c)\subseteq K^c\otimes \mathcal{A}$ for all $c\in\mathbb{C}$.
		\item $QW^c=W^cQ$ for all $c\in\mathbb{C}$.
	\end{enumerate}
\end{thm}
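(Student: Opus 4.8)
The plan is to prove the cycle of implications $(1)\Rightarrow(2)\Rightarrow(3)\Rightarrow(1)$, with the middle step being a basis-level computation using the explicit formula for $\alpha^{(2)}$ from Notation \ref{defn_alpha^2}. First, observe that since $L^2(V)\otimes L^2(V)$ decomposes as the orthogonal direct sum $\bigoplus_{c\in\mathbb{C}}K^c$ (the sum is finite since $V$ is finite, so only finitely many values $c$ occur among the entries $W_{kl}$), each $W^c$ is precisely the orthogonal projection onto $K^c$ after identifying $B(L^2(V))$-valued matrices with operators; more precisely, $W^c$ viewed as a matrix is the $\{0,1\}$-matrix whose support indexes the basis vectors $\chi_k\otimes\chi_l$ spanning $K^c$. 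The key bookkeeping identity is $W=\sum_c c\,W^c$, so statement $(1)$ relates to the $W^c$ through linear combinations.

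For $(3)\Rightarrow(1)$: this is immediate, since $QW=Q\sum_c c\,W^c=\sum_c c\,QW^c=\sum_c c\,W^cQ=WQ$. For $(2)\Rightarrow(3)$: the condition $\alpha^{(2)}(K^c)\subseteq K^c\otimes\mathcal{A}$ says that the unitary co-representation $\alpha^{(2)}$ leaves each $K^c$ invariant, hence commutes with the orthogonal projection onto $K^c$. Translating this into the matrix picture via the formula $\alpha^{(2)}(\chi_k\otimes\chi_l)=\sum_{i,j}\chi_i\otimes\chi_j\otimes q_{ik}q_{jl}$, invariance of $K^c$ forces $q_{ik}q_{jl}=0$ whenever $W_{kl}=c$ but $W_{ij}\neq c$. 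One then checks that this family of vanishing relations is exactly equivalent to $QW^c=W^cQ$ by comparing the $(i,j),(k,l)$ entries of both sides: the $((i,j),(k,l))$ entry of $W^cQ$ (thinking of $W^c, Q$ as acting on the tensor product, i.e. $W^c\otimes 1$ composed appropriately, or more cleanly working entrywise) picks up $q_{ik}q_{jl}$ terms weighted by whether $W_{kl}=c$, and similarly for $QW^c$ weighted by whether $W_{ij}=c$; equality holds iff the mismatched products vanish. This is where a little care is needed with indices, but it is the routine heart of Banica's Theorem 2.2, which the statement explicitly invokes.

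For $(1)\Rightarrow(2)$: here I would use that $QW=WQ$ together with the fact that $Q$ is a magic unitary, so its entries $q_{ij}$ are projections satisfying $\sum_i q_{ij}=\sum_j q_{ij}=1$. The argument runs by induction on the powers, or more directly: the commutation $QW=WQ$ propagates to $QW^k=W^kQ$ for all $k$, hence $Q$ commutes with every polynomial in $W$. Since the distinct $W^c$ are spectral-type projections that can be written as polynomials in $W$ — because $W=\sum_c c\,W^c$ with the $W^c$ orthogonal idempotents supported on disjoint index sets, so one interpolates a polynomial $p_c$ with $p_c(W)=W^c$ provided the values $c$ are distinct, which they are by construction — we get $QW^c=W^cQ$, i.e.\ $(3)$, and then the invariance $\alpha^{(2)}(K^c)\subseteq K^c\otimes\mathcal{A}$ follows by reversing the entrywise translation from the previous paragraph. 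Strictly this shows $(1)\Rightarrow(3)\Rightarrow(2)$, which together with $(3)\Rightarrow(1)$ and $(2)\Rightarrow(3)$ closes all equivalences; alternatively one can give $(2)\Rightarrow(3)\Rightarrow(1)\Rightarrow(2)$ as the clean cycle.

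The main obstacle I anticipate is purely notational rather than conceptual: one must be scrupulous about the identification of the $V\times V$ "adjacency-type" matrices $W^c$ with operators on $L^2(V)$ versus with matrices whose rows/columns index the tensor basis of $L^2(V)\otimes L^2(V)$, and about the placement of legs when writing $\alpha^{(2)}$ as $\tilde{\alpha^{(2)}}\in B(L^2(V)\otimes L^2(V))\otimes\mathcal{A}$. Once the dictionary "$K^c$ invariant under $\alpha^{(2)}$" $\longleftrightarrow$ "$q_{ik}q_{jl}=0$ for $W_{kl}=c\neq W_{ij}$" $\longleftrightarrow$ "$QW^c=W^cQ$" is set up carefully, every implication is a one-line manipulation, and the polynomial-interpolation observation (distinct eigenvalue-like constants $c$) is what upgrades the single commutation $(1)$ to the refined family $(3)$.
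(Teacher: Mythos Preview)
Your $(3)\Rightarrow(1)$ is fine, and your translation of $(2)$ into the vanishing relations $q_{ik}q_{jl}=0$ whenever $W_{ij}\neq W_{kl}$ is the right first step. But there is a genuine gap in your $(1)\Rightarrow(3)$ argument, and a related dimensional confusion in $(2)\Rightarrow(3)$.

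The polynomial-interpolation claim is false: the matrices $W^c$ are \emph{not} spectral projections of $W$, not idempotents, and not in general polynomials in $W$. They are entrywise indicator matrices ($W^c_{ij}=1$ iff $W_{ij}=c$), which is a very different object. For $W=\begin{pmatrix}0&1\\0&0\end{pmatrix}$ one has $W^2=0$, so the algebra generated by $W$ is spanned by $I$ and $W$, yet $W^0=\begin{pmatrix}1&0\\1&1\end{pmatrix}$ is not in that span. More to the point, $(1)\Rightarrow(3)$ is \emph{false} for a general matrix $Q$: with this same $W$ and $Q=\begin{pmatrix}1&1\\0&1\end{pmatrix}$ one checks $QW=WQ$ but $QW^0\neq W^0Q$. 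So the implication cannot be proved by any argument that treats $Q$ merely as a matrix commuting with $W$; it genuinely needs the quantum permutation relations, which your argument mentions but never actually invokes.

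The paper itself gives no proof here---it simply cites Banica's Theorem~2.2. The mechanism behind that result is: starting from $\sum_k q_{ik}W_{kj}=\sum_k W_{ik}q_{kj}$, multiply on the left by $q_{il}$ and on the right by $q_{mj}$, and use the row/column orthogonality $q_{il}q_{ik}=\delta_{lk}q_{il}$ and $q_{kj}q_{mj}=\delta_{km}q_{mj}$ to collapse the sums, obtaining $(W_{lj}-W_{im})q_{il}q_{mj}=0$. This yields the vanishing relations directly and hence $(2)$. For $(2)\Rightarrow(3)$, note that $Q$ and $W^c$ are $|V|\times|V|$ matrices, so there are no ``$((i,j),(k,l))$ entries'' to compare; instead insert $1=\sum_j q_{jl}$ into $(QW^c)_{il}=\sum_{k:W_{kl}=c}q_{ik}$ and $1=\sum_k q_{ik}$ into $(W^cQ)_{il}=\sum_{j:W_{ij}=c}q_{jl}$, after which the vanishing relations reduce both sides to the common double sum $\sum_{k:W_{kl}=c}\sum_{j:W_{ij}=c}q_{ik}q_{jl}$.
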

Now we recall the notions of quantum symmetry in a single edged graph given by  Bichon (\cite{Bichon2003}) and Banica (\cite{Banica2005}).
\begin{defn}\label{maindef_simp}
	Let $(V,E,w)$ be a weighted single edged graph with its adjacency matrix $W$. A co-action $\alpha:C(V)\otimes \mathcal{A}$ of a CQG $(\mathcal{A},\Delta)$ on $C(V)$ is said to preserve \textbf{quantum symmetry of $(V,E,w)$ in Banica's sense} if any of the following equivalent statement holds:
	\begin{enumerate}
		\item $QW=WQ$ where $Q$ is the co-representation matrix of $\alpha$.
		\item For all $c\in\mathbb{C}$, $\alpha^{(2)}(L^2(E^c))\subseteq L^2(E^c)\otimes \mathcal{A}$ where $$E^c=\{(i,j)\in E\:|\: w((i,j))=c\}.$$ 
\end{enumerate}
Moreover, $\alpha$ is said to preserve \textbf{quantum symmetry of $(V,E)$ in Bichon's sense} if $\alpha^{(2)}$ is a co-action on the algebra $C(E)$.
\end{defn}
The categories $\mathcal{D}^{Ban}_{(V,E,w)}$ and $\mathcal{D}^{Bic}_{(V,E,w)}$ consisting of CQGs co-acting on $(V,E,w)$ preserving its quantum symmetry in Banica's sense of Bichon's sense respectively admit universal objects namely $S^{Ban}_{(V,E,w)}$ and $S^{Bic}_{(V,E,w)}$. These are two different \textbf{quantum automorphism groups} of $(V,E,w)$.  
	
\subsection{Free wreath product by quantum permutation groups:}
	We recall the construction of \textbf{free wreath product by quantum permutation groups} formulated by Bichon in \cite{Bichon2004}. Similar treatment also works if we consider any subgroup of a quantum permutation groups. (see \cite{Banica2007a}).
	\par  
	Let $(\mathcal{B},\Delta')$ be a quantum subgroup of $S^+_n$ where  $S_n^+$ is the quantum permutation group on $n$ elements. Let $(\mathcal{A},\Delta)$ be another compact quantum group.  We consider $\mathcal{A}^{*n}$ to be $n$ times free product of the C* algebra $\mathcal{A}$ with the canonical inclusion maps $\nu_i:\mathcal{A}\rightarrow \mathcal{A}^{*n}$ where $i=1,2,..,n$. The algebra $\mathcal{A}^{*n}$ has a natural co-product structure coming from $(\mathcal{A},\Delta)$ making it a compact quantum group (see \cite{Wang1995}). We observe that, there is a natural co-action $\alpha:\mathcal{A}^{*n}\rightarrow\mathcal{A}^{*n}\otimes \mathcal{B}$ of the CQG $(\mathcal{B},\Delta')$ on the algebra $\mathcal{A}^{*n}$, which is given by,
	\begin{equation}\label{wr_pft_def_cann_action}
		\alpha(\nu_i(a))=\sum_{j=1}^n \nu_j(a)\otimes x_{ji}\quad\text{where}\quad i=1,2,..,n\quad\text{and}\quad a\in\mathcal{A}.  
	\end{equation}
	Here $(x_{ij})_{i,j=1,..,n}$ is the matrix of canonical generators of $\mathcal{B}$  satisfying \textbf{quantum permutation relations}.
	\begin{defn}
		 The \textbf{free wreath product of $(\mathcal{A},\Delta)$ by $(\mathcal{B},\Delta')$} is the quotient of the C* algebra $\mathcal{A}^{*n}*\mathcal{B}$ by a two sided C* ideal generated by the elements:
		\begin{equation*}
			\nu_i(a)x_{ij}-x_{ij}\nu_i(a),\quad 1\leq i,j\leq n,\quad a\in \mathcal{A}.
		\end{equation*}
		The free wreath product of $(\mathcal{A},\Delta)$ by $(\mathcal{B},\Delta')$ will be denoted by $\mathcal{A}*_w \mathcal{B}$.
	\end{defn}
	We recall theorem 3.2 from \cite{Bichon2004} which describes the  co-product structure on $\mathcal{A}*_w \mathcal{B}$.
	\begin{thm}\label{copdt_wr_pdt}
		There  is a natural co-product structure $\Delta_w$ on $\mathcal{A}*_w \mathcal{B}$ making it a compact quantum group. The co-product $\Delta_w$ satisfies:
		\begin{equation*}
			\Delta_w(x_{ij})=\sum_{k=1}^n x_{ik}\otimes x_{kj},\quad 
			\Delta_w(\nu_i(a))=\sum_{k=1}^n \nu_i\otimes \nu_k(\Delta(a))(x_{ik}\otimes 1).
		\end{equation*}	
		for all $i,j=1,..,n$ and $a\in \mathcal{A}$.
	\end{thm}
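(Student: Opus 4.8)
The plan is to build the co-product $\Delta_w$ from its prescribed values on the generators $\{x_{ij}\}$ and $\{\nu_i(a):a\in\mathcal A\}$ of $\mathcal A*_w\mathcal B$, and then to verify the compact quantum group axioms on this generating set. Throughout write $\Delta(a)=\sum a_{(1)}\otimes a_{(2)}$ in Sweedler notation, so that the second displayed formula reads $\Delta_w(\nu_i(a))=\sum_{k=1}^n\sum\nu_i(a_{(1)})x_{ik}\otimes\nu_k(a_{(2)})$, an element of $(\mathcal A*_w\mathcal B)\otimes(\mathcal A*_w\mathcal B)$, and $\Delta_w(x_{ij})=\sum_k x_{ik}\otimes x_{kj}$.

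First I would produce a $*$-homomorphism out of the free product $\mathcal A^{*n}*\mathcal B$. By its universal property this amounts to giving, for each $i$, a unital $*$-homomorphism $\phi_i\colon\mathcal A\to(\mathcal A*_w\mathcal B)\otimes(\mathcal A*_w\mathcal B)$, $\phi_i(a)=\sum_k\sum\nu_i(a_{(1)})x_{ik}\otimes\nu_k(a_{(2)})$, together with the $*$-homomorphism $\mathcal B\to(\mathcal A*_w\mathcal B)\otimes(\mathcal A*_w\mathcal B)$, $x_{ij}\mapsto\sum_k x_{ik}\otimes x_{kj}$ (the latter being $\Delta'$ followed by the canonical embedding, hence a $*$-homomorphism). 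Linearity and boundedness of $\phi_i$ are clear once one writes $\phi_i(a)=\sum_k(\nu_i\otimes\nu_k)(\Delta(a))(x_{ik}\otimes 1)$; multiplicativity $\phi_i(a)\phi_i(b)=\phi_i(ab)$ is where the defining relations of $\mathcal A*_w\mathcal B$ enter, as one moves each $x_{ik}$ past $\nu_i(b_{(1)})$ using the commutation relation $\nu_i(b_{(1)})x_{ik}=x_{ik}\nu_i(b_{(1)})$ and then collapses the resulting double sum with $x_{ik}x_{il}=\delta_{kl}x_{ik}$ (a consequence of the quantum permutation relations); unitality uses $\sum_k x_{ik}=1$, and $\phi_i(a^*)=\phi_i(a)^*$ uses that $\Delta$ and the $\nu_i$ are $*$-homomorphisms together with the same commutation relation. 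These maps assemble into a $*$-homomorphism $\Phi\colon\mathcal A^{*n}*\mathcal B\to(\mathcal A*_w\mathcal B)\otimes(\mathcal A*_w\mathcal B)$.

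The crux is to check that $\Phi$ kills the ideal defining $\mathcal A*_w\mathcal B$, i.e.\ $\Phi(\nu_i(a)x_{ij}-x_{ij}\nu_i(a))=0$ for all $i,j$ and $a\in\mathcal A$. Expanding $\Phi(\nu_i(a)x_{ij})=\phi_i(a)\bigl(\sum_k x_{ik}\otimes x_{kj}\bigr)$ and $\Phi(x_{ij}\nu_i(a))=\bigl(\sum_k x_{ik}\otimes x_{kj}\bigr)\phi_i(a)$ and simplifying each with the commutation relations $\nu_p(c)x_{pq}=x_{pq}\nu_p(c)$ and with $x_{ik}x_{il}=\delta_{kl}x_{ik}$, one finds both equal to $\sum_k\sum\nu_i(a_{(1)})x_{ik}\otimes\nu_k(a_{(2)})x_{kj}$ — for the second expression the concluding step is precisely that $x_{kj}$ commutes with $\nu_k(a_{(2)})$ in the quotient. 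Hence $\Phi$ factors through a unital $*$-homomorphism $\Delta_w\colon\mathcal A*_w\mathcal B\to(\mathcal A*_w\mathcal B)\otimes(\mathcal A*_w\mathcal B)$ with the asserted values on generators. I expect this descent step — where the exact shape of the relations defining the free wreath product is what makes everything fit together — to be the main technical point; the earlier verifications are essentially bookkeeping with the same two relations.

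It then remains to check the compact quantum group axioms. Coassociativity $(\Delta_w\otimes\mathrm{id})\Delta_w=(\mathrm{id}\otimes\Delta_w)\Delta_w$ is verified on generators: on each $x_{ij}$ it is the coassociativity of $\Delta'$, and on each $\nu_i(a)$ it follows from the coassociativity of $\Delta$ after a short rearrangement again using $x_{ik}x_{il}=\delta_{kl}x_{ik}$; since both sides are $*$-homomorphisms agreeing on a generating set, they agree. For the density conditions the efficient route is to exhibit a generating unitary co-representation: when $\mathcal A$ is a compact matrix quantum group with fundamental co-representation $w=(w_{kl})$, put $u_{(i,k),(j,l)}:=\nu_i(w_{kl})x_{ij}=x_{ij}\nu_i(w_{kl})$; the same manipulations as above show $\Delta_w(u_{(i,k),(j,l)})=\sum_{j',l'}u_{(i,k),(j',l')}\otimes u_{(j',l'),(j,l)}$, that $u$ is unitary (using that $w$ is unitary and that distinct entries in a row, resp.\ column, of $(x_{ij})$ multiply to zero), and that $\sum_j u_{(i,k),(j,l)}=\nu_i(w_{kl})$, so $u$ together with the matrix $(x_{ij})$ generates $\mathcal A*_w\mathcal B$. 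By Woronowicz's characterisation of compact matrix quantum groups this makes $(\mathcal A*_w\mathcal B,\Delta_w)$ a compact quantum group; for a general CQG $\mathcal A$ one runs this over all finite-dimensional unitary co-representations $w$ of $\mathcal A$, whose matrix coefficients generate $\mathcal A$ densely, obtaining a generating family of finite-dimensional unitary co-representations, which likewise yields the required density. Alternatively the density conditions can be checked directly, using that the rows and columns of $(x_{ij})$ are orthonormal.
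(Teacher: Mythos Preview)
The paper does not prove this theorem: it is stated as a recall of theorem~3.2 from \cite{Bichon2004}, so there is no ``paper's own proof'' to compare against. Your argument is correct and follows the standard route from Bichon's original paper --- construct the $*$-homomorphism on $\mathcal A^{*n}*\mathcal B$ via the universal property, check it annihilates the wreath-product relations (your identification of this as the crux is right), verify coassociativity on generators, and obtain the density conditions from a generating family of unitary co-representations.

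One minor remark: your use of Sweedler notation is informal for a general CQG since $\Delta(a)$ lives in the minimal tensor product, but your alternative formula $\phi_i(a)=\sum_k(\nu_i\otimes\nu_k)(\Delta(a))(x_{ik}\otimes 1)$ is well defined on all of $\mathcal A$ and the multiplicativity check goes through directly at the C*-level (the commutation $(x_{ik}\otimes 1)(\nu_i\otimes\nu_l)(\Delta(b))=(\nu_i\otimes\nu_l)(\Delta(b))(x_{ik}\otimes 1)$ holds by continuity from simple tensors), so no appeal to the dense Hopf $*$-subalgebra is needed there.
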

	As an immidiate application  of the above construction in the theory of quantum symmetry in simple graphs we state theorem 4.2 from \cite{Bichon2004} (see also theorem 7.1 from \cite{Banica2007a}).
	\begin{thm}\label{disjoint_union_simp_graph_qaut}
		Let $(V,E)$ be a finite connected simple graph (without loops). Let us consider another simple graph $(V^n, E^n)$ which is the disjoint union of $n$ copies of $(V,E)$. We have the following isomorphisms:
		\begin{align*}
			S^{Bic}_{(V^n,E^n)}&\cong S^{Bic}_{(V,E)}\:*_w\: S^+_n,\\ 
			S^{Ban}_{(V^n,E^n)}&\cong S^{Ban}_{(V,E)}\:*_w\: S^+_n
		\end{align*}
		where the underlying co-action of $S^+_n$ is given in equation \ref{wr_pft_def_cann_action}.
	\end{thm}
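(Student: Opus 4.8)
The plan is to exhibit mutually inverse morphisms of compact quantum groups between $S^\star_{(V^n,E^n)}$ and $S^\star_{(V,E)}\,*_w\,S^+_n$, treating the two cases $\star\in\{Bic,Ban\}$ in parallel. Identify the vertex set of $(V^n,E^n)$ with $V\times\{1,\dots,n\}$, so that its adjacency matrix is the block-diagonal $W^{(n)}_{(a,i),(b,j)}=\delta_{ij}W_{ab}$, where $W$ is the adjacency matrix of $(V,E)$. Write $Q=(q_{(a,i),(b,j)})$ for the fundamental corepresentation of $S^\star_{(V^n,E^n)}$, $(q_{ab})_{a,b\in V}$ for that of $S^\star_{(V,E)}$, $(x_{ij})$ for the canonical generators of $S^+_n$, and $e_i:=\sum_{a\in V}\chi_{(a,i)}\in C(V^n)$ for the indicator of the $i$-th copy (these are orthogonal projections summing to $1$).

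\emph{The easy direction.} First I would verify that $S^\star_{(V,E)}*_w S^+_n$ is an object over $(V^n,E^n)$ via the matrix $u_{(a,i),(b,j)}:=\nu_i(q_{ab})\,x_{ij}$. A direct computation with the quantum permutation relations (using $x_{ik}x_{il}=\delta_{kl}x_{ik}$ and that $\nu_i(q_{ab})$ commutes with $x_{ij}$) shows that $(u_{(a,i),(b,j)})$ is a magic unitary and, via the co-product formula of Theorem \ref{copdt_wr_pdt}, that $\Delta_w(u_{(a,i),(b,j)})=\sum_{(c,k)}u_{(a,i),(c,k)}\otimes u_{(c,k),(b,j)}$; since $QW=WQ$ holds in $S^\star_{(V,E)}$ and $W^{(n)}$ is block-diagonal, one also gets $UW^{(n)}=W^{(n)}U$. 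For the Bichon case one checks in addition that the induced map on $C(E^n)=\bigsqcup_i C(E_i)$ is a co-action, which is immediate since on the $i$-th block it is the edge co-action of $(V,E)$ implemented by $(\nu_i(q_{ab}))$ and the $x_{ij}$ merely permute the blocks. Hence universality of $S^\star_{(V^n,E^n)}$ yields a surjective quantum group morphism $\Phi:S^\star_{(V^n,E^n)}\to S^\star_{(V,E)}*_w S^+_n$ with $\Phi(q_{(a,i),(b,j)})=\nu_i(q_{ab})x_{ij}$.

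\emph{The hard direction.} Here one reconstructs the free wreath product generators inside $S^\star_{(V^n,E^n)}$. The crucial step is to show that $Q$ commutes with the block-indicator matrix $B^{(n)}$, $B^{(n)}_{(a,i),(b,j)}=\delta_{ij}$, and this is exactly where connectedness of $(V,E)$ enters. Since $Q$ is a magic unitary it is unitary with $Q^{-1}=Q^{T}$, so $QW^{(n)}=W^{(n)}Q$ forces $Q(W^{(n)})^{T}=(W^{(n)})^{T}Q$, hence $Q$ commutes with $(W+W^{T})^{(n)}$ and with every polynomial in it. Because $(V,E)$ is connected, $M:=\sum_{k=0}^{|V|-1}(W+W^{T})^{k}$ has strictly positive entries, so $M^{(n)}$ is strictly positive on the diagonal blocks and zero off them; applying Theorem \ref{wt_preserve} to $M^{(n)}$, the $0$-pattern matrix $J-B^{(n)}$ commutes with $Q$, and since $QJ=JQ=J$ we conclude $QB^{(n)}=B^{(n)}Q$. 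Spelling this out gives $\alpha(e_j)=\sum_i e_i\otimes x_{ij}$ with $x_{ij}:=\sum_{b}q_{(a,i),(b,j)}$ independent of $a$ (and equal to $\sum_{c}q_{(c,i),(b,j)}$ for every $b$), and one checks $(x_{ij})$ is a magic unitary satisfying the $S^+_n$ co-product relation. Next I would set $\hat\nu_l(q_{ab}):=\sum_{j}q_{(a,l),(b,j)}$ and check that $(\hat\nu_l(q_{ab}))_{a,b\in V}$ is a magic unitary commuting with $W$ (and, for Bichon, inducing an edge co-action obtained by restricting that of $(V^n,E^n)$ to the $l$-th block), so that $\hat\nu_l$ extends to a $*$-homomorphism $C(S^\star_{(V,E)})\to C(S^\star_{(V^n,E^n)})$, together with the free-wreath commutation relation $\hat\nu_l(q_{ab})\,x_{lj}=x_{lj}\,\hat\nu_l(q_{ab})$ (both sides in fact equal $q_{(a,l),(b,j)}$). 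These assemble into a quantum group morphism $\Psi:S^\star_{(V,E)}*_w S^+_n\to S^\star_{(V^n,E^n)}$ sending $\nu_l(q_{ab})\mapsto\sum_j q_{(a,l),(b,j)}$ and $x_{ij}\mapsto x_{ij}$.

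\emph{Conclusion and the main obstacle.} Finally I would check that $\Phi$ and $\Psi$ are mutually inverse by evaluating on generators, using only the magic unitary relations: $\Phi\Psi(x_{kl})=\sum_a\nu_k(q_{av})x_{kl}=x_{kl}$, $\Phi\Psi(\nu_l(q_{uv}))=\nu_l(q_{uv})\sum_j x_{lj}=\nu_l(q_{uv})$, and $\Psi\Phi(q_{(a,i),(b,j)})=\bigl(\sum_m q_{(a,i),(b,m)}\bigr)\bigl(\sum_c q_{(c,i),(b,j)}\bigr)=q_{(a,i),(b,j)}$. Thus $\Phi$ is an isomorphism, which proves both displayed isomorphisms simultaneously. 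I expect the main obstacle to be the step showing $Q$ commutes with $B^{(n)}$ — that is, that the quantum symmetry respects the decomposition into copies — together with the bookkeeping required to verify that $(\hat\nu_l(q_{ab}))$ and $(x_{ij})$ satisfy exactly the defining relations of the free wreath product; the Bichon case adds only the routine task of carrying the auxiliary edge co-action through each of these steps, which is handled via Proposition \ref{Bic_simpgr_quansym_charac}.
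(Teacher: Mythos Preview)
The paper does not give its own proof of this theorem: it is merely stated as an application, citing Theorem~4.2 of \cite{Bichon2004} and Theorem~7.1 of \cite{Banica2007a}. Your proposal is therefore not being compared against a proof in the paper but against the cited literature, and it follows essentially the same strategy as those references: construct the two morphisms $\Phi,\Psi$ by universality and check on generators that they are mutually inverse. The argument is correct. In particular, your use of connectedness via the matrix $M=\sum_{k=0}^{|V|-1}(W+W^{T})^{k}$ together with Theorem~\ref{wt_preserve} to obtain $QB^{(n)}=B^{(n)}Q$ is exactly the standard device, and the identities $\hat\nu_l(q_{ab})\,x_{lj}=q_{(a,l),(b,j)}=x_{lj}\,\hat\nu_l(q_{ab})$ are the heart of the matter.

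Two small points worth tightening. First, in the Bichon case the verification that $(\hat\nu_l(q_{ab}))_{a,b}$ yields an edge co-action on $(V,E)$ is not quite ``by restriction'': you need that $\hat\nu_l(q_{ac})\hat\nu_l(q_{bd})=0$ whenever $(c,d)\in E$ and $(a,b)\notin E$, and the cross terms with $j\neq j'$ in $\sum_{j,j'}q_{(a,l),(c,j)}q_{(b,l),(d,j')}$ vanish only because $q_{(a,l),(c,j)}\leq x_{lj}$ and $q_{(b,l),(d,j')}\leq x_{lj'}$ with $x_{lj}x_{lj'}=0$; this uses the block-preservation you already established, so the logic is fine but the word ``restricting'' undersells it. Second, in your final check $\Phi\Psi(x_{kl})=\sum_a\nu_k(q_{av})x_{kl}$ there is a stray index $v$; the intended computation is $\Phi\big(\sum_b q_{(a,k),(b,l)}\big)=\big(\sum_b\nu_k(q_{ab})\big)x_{kl}=x_{kl}$, which is what you want.
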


	\subsection{Setup and Notations:}\label{setup}
	 We introduce some notations and conventions that we will use throughout the rest of this article.
	Let $(V,E)$ be a multigraph with source and target maps $s:E\rightarrow V$ and $t:E\rightarrow V$. \textbf{We further assume that there is no isolated vertex, that is, every vertex is either an initial or final vertex of some edge}. 
	
	\begin{enumerate}
		\item For $i,j\in V$, we denote the the subsets $E^i$, $E_j$ and $E^i_j$ of $E$ by the following descriptions:
		\begin{align*}
			E^i_j&:=\{\tau\in E|s(\tau)=i\:\:\text{and}\:\: t(\tau)=j\};\\
			E^i&:=\{\tau\in E|s(\tau)=i\};\qquad E_j:=\{\tau\in E|t(\tau)=j\}.
		\end{align*}
         \item Let us define the sets of initial and final vertices $V^s\subseteq V$ and $V^t\subseteq V$ by
		\begin{equation*}
			V^s=s(E)\quad\text{and}\quad V^t=t(E). 
		\end{equation*}
		As our graphs do not have any isolated vertex, it is clear that $V=V^s\cup V^t$.
		\item\label{bimodule str} There is a natural $C(V^s)-C(V^t)$ bimodule structure on $L^2(E)$ which is given by
		\begin{equation}\label{bimod_str_def}
			\chi_i.\chi_{\tau}=\delta_{i,s(\tau)}\chi_{\tau}\quad
			\text{and}\quad\chi_{\tau}.\chi_j=\delta_{t(\tau),j}\chi_{\tau}
		\end{equation}
		where $i\in V^s,j\in V^t$ and $\tau\in E$. The Hilbert space $L^2(E)$ can also be treated as a $C(V)-C(V)$ bimodule with the same left and right module multiplication maps given by equations \ref{bimod_str_def}.
		
		\item For $\tau\in E$, let $p_{\tau}$ denote the orthogonal projection onto a subspace generated by $\chi_{\tau}$ in $L^2(E)$. We define two injective algebra maps $S:C(V^s)\rightarrow B(L^2(E))$ and $T:C(V^t)\rightarrow B(L^2(E))$ by
		\begin{equation*}
			S(\chi_v)=\sum_{\tau\in E^v} p_{\tau} \quad\text{and}\quad
			T(\chi_w)=\sum_{\tau\in E_w}p_{\tau}
		\end{equation*}
		for all $v$ in $V^s$ and $w$ in $V^t$.
		
		\item For $i,j\in V$ with $E^i_j\neq\phi$, let $p_{ij}$ be the orthogonal projection onto a linear subspace in $L^2(E)$  generated by the elements $\{\chi_{\tau}|\tau\in E^i_j\}$.  Let us define the following subalgebras in $B(L^2(E))$ by
		\begin{equation*}
			M_{ij}:=p_{ij}B(L^2(E))p_{ij}\quad\text{and}\quad D_{ij}:=p_{ij}Dp_{ij} 
		\end{equation*}
		where $D$ is the algebra of diagonal operators spanned by the elements $\{p_{\tau}|\tau\in E\}$.
		
	\end{enumerate}
	
\section{\textbf{Quantum symmetry in multigraphs}}\label{chap_quan_sym_direc_mult}
\par
Let us fix a multigraph $(V,E)$ with source and target maps $s:E\rightarrow V$ and $t:E\rightarrow V$ and no isolated vertex.  

\begin{defn}
	By a ``bi-unitary" co-representation $\beta$ of a CQG $(\mathcal{A},\Delta)$ on a finite dimensional Hilbert space,  we mean a unitary co-representation $\beta$ such that its contragradient $\overline{\beta}$ is also unitary. 
\end{defn} 

We make some observations before moving to the main results.
\begin{lem}\label{charac}
	Let $F\in B(L^2(E))$. Then,
	\begin{enumerate}
		\item $F\in S(C(V^s))$ if and only if the following holds:
		
		For $\tau,\tau_1,\tau_2\in E$,	$F(\chi_{\tau})=c_{\tau}\chi_{\tau}$ for some $c_{\tau}\in \mathbb{C}$ and $c_{\tau_1}= c_{\tau_2}$ whenever $s(\tau_1)=s(\tau_2)$.  
		
		\item $F\in T(C(V^t))$ if and only if the following holds:
		
		For $\tau,\tau_1,\tau_2\in E$,	$F(\chi_{\tau})=c_{\tau}\chi_{\tau}$ for some $c_{\tau}\in \mathbb{C}$ and $c_{\tau_1}= c_{\tau_2}$ whenever  $t(\tau_1)=t(\tau_2)$.  
	\end{enumerate}
\end{lem}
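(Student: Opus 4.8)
The plan is to characterize the images $S(C(V^s))$ and $T(C(V^t))$ inside $B(L^2(E))$ by unwinding the definitions of the maps $S$ and $T$ from item (5) of the Setup subsection. Since the two statements are completely symmetric (swap source for target), I would prove item (1) in detail and then remark that (2) follows \emph{mutatis mutandis}. For (1), I would argue both directions. For the ``only if'' direction, suppose $F\in S(C(V^s))$, so $F=S(g)$ for some $g\in C(V^s)$; writing $g=\sum_{v\in V^s}\lambda_v\chi_v$ we get $F=\sum_{v\in V^s}\lambda_v\big(\sum_{\tau\in E^v}p_\tau\big)$. Evaluating on a basis vector $\chi_\tau$ and using that $p_\sigma(\chi_\tau)=\delta_{\sigma,\tau}\chi_\tau$ together with the fact that the sets $\{E^v\}_{v\in V^s}$ partition $E$, we obtain $F(\chi_\tau)=\lambda_{s(\tau)}\chi_\tau$. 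So $F$ acts diagonally on the $\chi_\tau$ basis with eigenvalue $c_\tau:=\lambda_{s(\tau)}$ depending only on $s(\tau)$; in particular $c_{\tau_1}=c_{\tau_2}$ whenever $s(\tau_1)=s(\tau_2)$.

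For the ``if'' direction, suppose $F(\chi_\tau)=c_\tau\chi_\tau$ for all $\tau\in E$ with $c_{\tau_1}=c_{\tau_2}$ whenever $s(\tau_1)=s(\tau_2)$. The condition on the coefficients means that $c_\tau$ factors through the source map: there is a well-defined function $\mu\colon V^s\to\mathbb{C}$ with $\mu(s(\tau))=c_\tau$ for every $\tau\in E$ (well-definedness is exactly the hypothesis, and every $v\in V^s$ is $s(\tau)$ for some $\tau$ since $V^s=s(E)$, so $\mu$ is defined on all of $V^s$). Setting $g=\sum_{v\in V^s}\mu(v)\chi_v\in C(V^s)$, the computation in the previous paragraph shows $S(g)(\chi_\tau)=\mu(s(\tau))\chi_\tau=c_\tau\chi_\tau=F(\chi_\tau)$ for all $\tau$, hence $F=S(g)\in S(C(V^s))$ since the $\chi_\tau$ form a basis of $L^2(E)$.

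This is essentially a routine bookkeeping argument, so there is no serious obstacle; the only point requiring a little care is the well-definedness of the function $\mu$ in the ``if'' direction, which is precisely where the hypothesis ``$c_{\tau_1}=c_{\tau_2}$ whenever $s(\tau_1)=s(\tau_2)$'' is used, and the observation that $S$ is injective (already noted in the Setup) so that $g$ is uniquely determined. The proof of item (2) is identical with $S$, $V^s$, $E^v$, and the source map replaced throughout by $T$, $V^t$, $E_w$, and the target map.
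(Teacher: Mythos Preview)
Your proof is correct. The paper states this lemma without proof, treating it as an immediate consequence of the definition of $S$ and $T$ in item (5) of the Setup subsection; your argument is exactly the routine unwinding of those definitions that the authors evidently regarded as too elementary to spell out.
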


\begin{lem}\label{tech_lem1}
	Let $\{A_i|i=1,2,..,n\}$ be a finite set of operators on a Hilbert space and $p$ and $q$ be two projections.
	\begin{enumerate}
		\item If $\sum_{i=1}^{n}A_iA^*_i=p$, then $pA_i=A_i$.
		\item If $\sum_{i=1}^{n}A^*_iA_i=q$, then $A_iq=A_i$.
	\end{enumerate}
	
\end{lem}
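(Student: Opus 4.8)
The statement to prove is Lemma \ref{tech_lem1}, a simple operator-theoretic fact about finite families of operators and projections.

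My plan is to prove part (1); part (2) will follow by an almost verbatim symmetric argument (or by applying (1) to the adjoint operators $A_i^*$), so I would only write it out once and remark that the other case is analogous. For part (1), suppose $\sum_{i=1}^n A_i A_i^* = p$ with $p$ a projection, i.e. $p = p^* = p^2$. The key observation is that $p$ dominates each $A_iA_i^*$: since every term in the sum is positive, $0 \le A_i A_i^* \le p$ for each $i$. The standard fact I want to invoke is that if $0 \le S \le p$ with $p$ a projection, then $pS = Sp = S$; this is because $p$ acts as the identity on its range and $S$ has range contained in the range of $p$ (equivalently, $(1-p)S(1-p) = 0$ forces, by positivity of $S$, that $(1-p)S^{1/2} = 0$, hence $(1-p)S = 0$). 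Applying this with $S = A_i A_i^*$ gives $p A_i A_i^* = A_i A_i^*$, i.e. $(1-p)A_i A_i^* = 0$. Then $\|(1-p)A_i\|^2 = \|(1-p)A_i A_i^*(1-p)\| = 0$, so $(1-p)A_i = 0$, which is exactly $pA_i = A_i$.

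An alternative and perhaps cleaner route I would actually present: from $(1-p)\big(\sum_i A_iA_i^*\big)(1-p) = (1-p)p(1-p) = 0$ and the positivity of each summand, conclude $(1-p)A_iA_i^*(1-p) = 0$ for every $i$ (a sum of positive operators vanishing on a vector forces each summand to vanish there, hence the operator identity). This says $\big((1-p)A_i\big)\big((1-p)A_i\big)^* = 0$, so $(1-p)A_i = 0$, giving $pA_i = A_i$. For part (2), one sandwiches by $(1-q)$ on both sides of $\sum_i A_i^* A_i = q$ to get $\sum_i \big(A_i(1-q)\big)^*\big(A_i(1-q)\big) = 0$, hence $A_i(1-q) = 0$, i.e. $A_i q = A_i$.

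There is really no main obstacle here — the only thing to be careful about is the justification that a finite sum of positive operators equal to $0$ (or compressed to $0$ by a projection) forces each summand to be $0$ (respectively, to be killed by that compression); this is routine since $0 \le A_iA_i^* \le \sum_j A_jA_j^*$. So I expect the proof to be three or four lines once the positivity bookkeeping is set up. I would write part (1) in full and dispatch part (2) with ``the second statement follows by applying the first to $\{A_i^*\}$'' or by the symmetric compression argument.
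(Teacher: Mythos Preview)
Your proposal is correct and matches the paper's proof essentially verbatim: the paper also observes $A_iA_i^*\le p$, computes $\|(1-p)A_i\|^2=\|(1-p)A_iA_i^*(1-p)\|\le\|(1-p)p(1-p)\|=0$, and deduces (2) by applying (1) to the adjoints $A_i^*$.
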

\begin{proof}
	To prove (1), we observe that $A_iA^*_i\leq p$ for all $i$.
	It is enough to show that $\|(1-p)A_i\|=0$ for all $i$.
	\begin{equation*}
		\|(1-p)A_i\|^2=\|(1-p)A_iA^*_i(1-p)\|\leq \|(1-p)p(1-p)\|=0.
	\end{equation*}
	\par
	To prove (2), it is enough to observe that $qA^*_i=A^*_i$ which we get by replacing $A_i$ with $A^*_i$ in the first identity.
\end{proof}
\subsection{Left and right equivariant co-representations on $L^2(E)$:}\label{left_equiv_co-rep}
Seeing $L^2(E)$ as a left $C(V^s)$ module we formulate an equivalent criterion for left equivariant bi-unitary co-representations on $L^2(E)$. 
\begin{thm}\label{bi-mod_equiv_source}
	Let $\beta:L^2(E)\rightarrow L^2(E)\otimes \mathcal{A}$	be a bi-unitary co-representation of a CQG $(\mathcal{A},\Delta)$. Let $Ad_{\beta}$ be the co-action on $B(L^2(E))$ implemented by the unitary co-representation $\beta$ (see proposition \ref{adj_co-action}). Then the following conditions are equivalent:
	\begin{enumerate}
		\item $Ad_{\beta}(S(C(V^s)))\subseteq S(C(V^s))\otimes \mathcal{A}$.
		\item There exists a co-action $\alpha_s:C(V^s)\rightarrow C(V^s)\otimes \mathcal{A}$ such that,
		\begin{equation*}
			\alpha_s(\chi_i)\beta(\chi_{\tau})=\beta(\chi_i.\chi_{\tau})
		\end{equation*}
		for all $i\in V^s$ and $\tau\in E$.
	\end{enumerate}
\end{thm}
\begin{proof}
	Let $U=(u^{\sigma}_{\tau})_{\sigma,\tau\in E}$ be the co-representation matrix of $\beta$.\\ 
	We make some observations first before proving the equivalence.
	Let us fix $k\in V^s$ and $\sigma_2\in E$. We observe that,
	\begin{align*}
		Ad_{\beta}(S(\chi_k))(\chi_{\sigma_2}\otimes 1)&=Ad_{\beta}(\sum_{\tau\in E^k}p_{\tau})(\chi_{\sigma_2}\otimes 1)\\
		&=\beta(\sum_{\tau\in E^k}p_{\tau}\otimes 1)(\sum_{\tau'\in E}\chi_{\tau'}\otimes {u^{\sigma_2}_{\tau'}}^*)\\
		&=\sum_{\sigma_1\in E}\chi_{\sigma_1}\otimes (\sum_{\tau\in E^k}u^{\sigma_1}_{\tau}{u^{\sigma_2}_{\tau}}^*)
	\end{align*}
	Applying lemma \ref{charac} we get that, for all $k\in V^s$ and $\sigma_1,\sigma_2\in E$
	\begin{align}
		Ad_{\beta}(S(C(V^s)))\subseteq S(C(V^s))\otimes \mathcal{A}\iff
		&\sum_{\tau\in E^k}u^{\sigma_1}_{\tau}{u^{\sigma_2}_{\tau}}^*=0\quad\text{if}\quad \sigma_1\neq\sigma_2\notag\\
		\text{and}\quad &\sum_{\tau\in E^k}{u^{\sigma_1}_{\tau}u^{\sigma_1}_\tau}^*=\sum_{\tau\in E^k}{u^{\sigma_2}_{\tau}u^{\sigma_2}_\tau}^*\quad\text{if}\quad
		s(\sigma_1)=s(\sigma_2)\label{source_iden_simp}.
	\end{align}
	Let $\alpha_s:C(V^s)\rightarrow C(V^s)\otimes\mathcal{A}$ be a co-action on $C(V^s)$ with co-representation matrix $(q^i_j)_{i,j\in V^s}$. Let $i\in V^s$ and $\tau\in E$. We observe that  
	\begin{equation}\label{bi-mod_charac_source_abs}
		\alpha_s(\chi_i)\beta(\chi_{\tau})=\beta(\chi_i.\chi_{\tau})\iff q^{s(\sigma)}_{i}u^{\sigma}_{\tau}=\delta_{i,s(\tau)}u^{\sigma}_{\tau}
	\end{equation}
	for all  $\sigma\in E$.\\
	\textbf{Claim:} $(1)\implies (2)$.\\
	From our assumption and observation \ref{source_iden_simp} it follows that,
	\begin{equation}\label{eigvec1_simp}
		Ad_{\beta}(S(\chi_k))(\chi_{\sigma_2}\otimes 1)=\chi_{\sigma_2}\otimes (\sum_{\tau\in E^k}u^{\sigma_2}_{\tau}{u^{\sigma_2}_{\tau}}^*)\quad\text{for all}\quad k\in V^s,\sigma_2\in E.
	\end{equation}
	For $k\in V^s$ and $\sigma_2\in E$, let us define
	\begin{equation}\label{source_perm_simp}
		\sum_{\tau\in E^k}u^{\sigma_2}_{\tau}{u^{\sigma_2}_{\tau}}^*=q^{s(\sigma_2)}_k.
	\end{equation}
	From equation \ref{eigvec1_simp} we further observe that,
	\begin{align*}
		Ad_{\beta}(S(\chi_k))=\sum_{\sigma\in E}p_{\sigma}\otimes q^{s(\sigma)}_k
		=\sum_{i\in V^s}(\sum_{\sigma\in E^{i}}p_{\sigma})\otimes q^{i}_{k}
		=\sum_{i\in V^s} S(\chi_{i})\otimes q^{i}_k.
	\end{align*}
	As $C(V^s)\cong S(C(V^s))$ as algebras and $Ad_{\beta}$ is already a co-action on $S(C(V^s))$, we define a \textbf{quantum permutation} $\alpha_s:C(V^s)\rightarrow C(V^s)\otimes \mathcal{A}$ by the following expression:
	\begin{equation*}
		\alpha_s(\chi_k)=\sum_{i\in V^s}\chi_i\otimes q^i_k\quad \text{for all}\quad k\in V^s.
	\end{equation*}
	Let us now fix $i\in V^s$ and $\sigma,\tau\in E$. From equation \ref{source_perm_simp} and lemma \ref{tech_lem1} it follows that,
	\begin{equation*}
		q^{s(\sigma)}_iu^{\sigma}_{\tau}=q^{s(\sigma)}_i(q^{s(\sigma)}_{s(\tau)}u^{\sigma}_{\tau})=\delta_{i,s(\tau)}u^{\sigma}_{\tau}.
	\end{equation*}
	Using observation \ref{bi-mod_charac_source_abs} we conclude that (2) follows.
	\\
	\\
	\textbf{Claim:}$(2)\implies (1)$.\\
	Let $(q^i_j)_{i,j\in V^s}$ and $(u^{\sigma}_{\tau})_{\sigma,\tau\in E}$ be co-representation matrices of $\alpha_s$ and $\beta$.\\
	Let $\sigma_1,\sigma_2\in E$ and $k\in V^s$. As $\beta$ is unitary, using observation \ref{bi-mod_charac_source_abs} it follows that,
	\begin{equation*}
		\sum_{\tau\in E^k}u^{\sigma_1}_{\tau}{u^{\sigma_2}_{\tau}}^*=q^{s(\sigma_1)}_k(\sum_{\tau\in E}u^{\sigma_1}_{\tau}{u^{\sigma_2}_{\tau}}^*)q^{s(\sigma_2)}_k=\delta_{\sigma_1,\sigma_2}q^{s(\sigma_1)}_kq^{s(\sigma_2)}_k.
	\end{equation*}
	Hence we get,
	\begin{align*}
		\sum_{\tau\in E^k}u^{\sigma_1}_{\tau}{u^{\sigma_2}_{\tau}}^*&=0\quad\text{if}\quad \sigma_1\neq\sigma_2\notag\\
		\text{and}\quad \sum_{\tau\in E^k}{u^{\sigma_1}_{\tau}u^{\sigma_1}_\tau}^*&=\sum_{\tau\in E^k}{u^{\sigma_2}_{\tau}u^{\sigma_2}_\tau}^*\quad\text{if}\quad
		s(\sigma_1)=s(\sigma_2).
	\end{align*}
	Therefore (1) follows from observation \ref{source_iden_simp}.
\end{proof}

Seeing $L^2(E)$ as a right $C(V^t)$ module we formulate a equivalent criterion for right equivariant bi-unitary  co-representations on $L^2(E)$. 

\begin{thm}\label{bi-mod_equiv_targ}
	Let $\beta:L^2(E)\rightarrow L^2(E)\otimes \mathcal{A}$	be a bi-unitary co-representation of a CQG $(\mathcal{A},\Delta)$. Let us consider the co-action $Ad_{\overline{\beta}}$ on $B(L^2(E))$ implemented by the unitary co-representation $\overline{\beta}$. The following conditions are equivalent:
	\begin{enumerate}
		\item $Ad_{\overline{\beta}}(T(C(V^t)))\subseteq T(C(V^t))\otimes \mathcal{A}$.
		\item There exists a co-action $\alpha_t:C(V^t)\rightarrow C(V^t)\otimes \mathcal{A}$ such that,
		\begin{equation*}
			\beta(\chi_{\tau})\alpha_t(\chi_j)=\beta(\chi_{\tau}.\chi_j)
		\end{equation*}
		for all $j\in V^t$ and $\tau\in E$. 
	\end{enumerate}
\end{thm}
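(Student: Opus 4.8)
The plan is to mirror the proof of Theorem \ref{bi-mod_equiv_source}, replacing $\beta$ by its adjoint co-representation $\overline{\beta}$ and the source map by the target map throughout. Write $U=(u^{\sigma}_{\tau})_{\sigma,\tau\in E}$ for the co-representation matrix of $\beta$, so that $\overline{\beta}$ has co-representation matrix $\overline{U}=({u^{\sigma}_{\tau}}^*)_{\sigma,\tau\in E}$, and note that bi-unitarity of $\beta$ is precisely the statement that \emph{both} $U$ and $\overline{U}$ are unitary in $M_{|E|}(\mathcal{A})$; the switch from $\beta$ to $\overline{\beta}$ is exactly what interchanges the roles of source and target here.

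First I would compute, for fixed $k\in V^t$ and $\sigma_2\in E$, the element $Ad_{\overline{\beta}}(T(\chi_k))(\chi_{\sigma_2}\otimes 1)$. Using $T(\chi_k)=\sum_{\tau\in E_k}p_{\tau}$ and unwinding the definition of $Ad_{\overline{\beta}}$ from Proposition \ref{adj_co-action}, one obtains the target-side analogue of the display in the proof of Theorem \ref{bi-mod_equiv_source}, namely
\[Ad_{\overline{\beta}}(T(\chi_k))(\chi_{\sigma_2}\otimes 1)=\sum_{\sigma_1\in E}\chi_{\sigma_1}\otimes\Big(\sum_{\rho\in E_k}{u^{\sigma_1}_{\rho}}^*u^{\sigma_2}_{\rho}\Big).\]
Combining this with Lemma \ref{charac}(2) gives the characterisation: condition (1) holds if and only if, for all $k\in V^t$, $\sum_{\rho\in E_k}{u^{\sigma_1}_{\rho}}^*u^{\sigma_2}_{\rho}=0$ whenever $\sigma_1\neq\sigma_2$ and $\sum_{\rho\in E_k}{u^{\sigma_1}_{\rho}}^*u^{\sigma_1}_{\rho}=\sum_{\rho\in E_k}{u^{\sigma_2}_{\rho}}^*u^{\sigma_2}_{\rho}$ whenever $t(\sigma_1)=t(\sigma_2)$. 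I would also record, exactly as in observation \ref{bi-mod_charac_target}, that for a co-action $\alpha_t$ on $C(V^t)$ with matrix $(q^i_j)_{i,j\in V^t}$ one has $\beta(\chi_{\tau})\alpha_t(\chi_j)=\beta(\chi_{\tau}.\chi_j)$ for all $\tau$ if and only if $u^{\sigma}_{\tau}q^{t(\sigma)}_j=\delta_{j,t(\tau)}u^{\sigma}_{\tau}$ for all $\sigma\in E$.

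For (1)$\Rightarrow$(2), under the characterisation above the quantity $\sum_{\rho\in E_k}{u^{\sigma_2}_{\rho}}^*u^{\sigma_2}_{\rho}$ depends only on $t(\sigma_2)$ and $k$; define it to be $q^{t(\sigma_2)}_k$, deduce $Ad_{\overline{\beta}}(T(\chi_k))=\sum_{j\in V^t}T(\chi_j)\otimes q^j_k$, and transport this along the algebra isomorphism $C(V^t)\cong T(C(V^t))$ to obtain a co-action $\alpha_t(\chi_k)=\sum_{j\in V^t}\chi_j\otimes q^j_k$ (it is a co-action because $Ad_{\overline{\beta}}$ already restricts to one on $T(C(V^t))$). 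Then Lemma \ref{tech_lem1}(2) applied to the family $\{u^{\sigma_2}_{\rho}\}_{\rho\in E_k}$ yields $u^{\sigma_2}_{\rho}q^{t(\sigma_2)}_{t(\rho)}=u^{\sigma_2}_{\rho}$, and multiplying by $q^{t(\sigma)}_j$ and invoking the quantum permutation relations for $(q^i_j)$ gives $u^{\sigma}_{\tau}q^{t(\sigma)}_j=\delta_{j,t(\tau)}u^{\sigma}_{\tau}$, i.e. (2). For (2)$\Rightarrow$(1), starting from $u^{\sigma}_{\tau}q^{t(\sigma)}_j=\delta_{j,t(\tau)}u^{\sigma}_{\tau}$ I would take $j=t(\tau)$, pass to adjoints, and use the projection relations among the $q$'s to rewrite $\sum_{\rho\in E_k}{u^{\sigma_1}_{\rho}}^*u^{\sigma_2}_{\rho}=q^{t(\sigma_1)}_k\big(\sum_{\rho\in E}{u^{\sigma_1}_{\rho}}^*u^{\sigma_2}_{\rho}\big)q^{t(\sigma_2)}_k$. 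This is where bi-unitarity is essential: unitarity of $\overline{U}$ (equivalently $\overline{U}\,\overline{U}^{*}=1$) collapses the full sum $\sum_{\rho\in E}{u^{\sigma_1}_{\rho}}^*u^{\sigma_2}_{\rho}$ to $\delta_{\sigma_1\sigma_2}$, and one reads off the characterisation of (1). The only genuine subtlety, and the step I would watch most carefully, is the bookkeeping of which of $u$ and $u^{*}$ sits on which side and of which unitarity relation ($UU^{*}$, $U^{*}U$, $\overline{U}\,\overline{U}^{*}$ or $\overline{U}^{*}\overline{U}=1$) is being invoked at each point; passing from $\beta$ to $\overline{\beta}$ swaps these relative to Theorem \ref{bi-mod_equiv_source}, and in particular (2)$\Rightarrow$(1) here needs unitarity of $\overline{\beta}$ rather than of $\beta$. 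Everything else is a routine transcription of the source-side argument.
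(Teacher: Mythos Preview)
Your proposal is correct and follows essentially the same route as the paper's proof: the same $Ad_{\overline{\beta}}$ computation, the same use of Lemma~\ref{charac}(2) to characterise condition (1), the same definition of $\alpha_t$ via the diagonal coefficients together with Lemma~\ref{tech_lem1}(2) for $(1)\Rightarrow(2)$, and the same sandwich-by-projections argument with unitarity of $\overline{\beta}$ for $(2)\Rightarrow(1)$. Your explicit flagging of which unitarity relation ($\overline{U}\,\overline{U}^{*}=1$) is needed in the last step is exactly the point the paper is using when it writes ``as $\overline{\beta}$ is unitary.''
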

\begin{proof}
Using similar arguments as in the proof of theorem \ref{bi-mod_equiv_source} for unitary co-representation $\overline{\beta}$, theorem \ref{bi-mod_equiv_targ} follows. 
\end{proof}
\subsubsection{\textbf{Induced permutations on $V^s$ and $V^t$:}}\label{alpha-s_alpha-t_rem_simp}

It is clear that  the co-actions  $\alpha_s$ and $\alpha_t$ satisfying (2) in theorem \ref{bi-mod_equiv_source}
and theorem \ref{bi-mod_equiv_targ} are essentially unique as they are completely determined by the bi-unitary co-representation $\beta$. Given a bi-unitary co-representation $\beta$ satisfying (1) in theorem \ref{bi-mod_equiv_source} and theorem \ref{bi-mod_equiv_targ}, we will refer $\alpha_s$ and $\alpha_t$ as \textbf{induced co-actions on $C(V^s)$ and $C(V^t)$}. In terms of coefficients of  co-representation matrices, we have the following:
\begin{align}
\sum_{\tau\in  E^k}u^{\sigma_1}_{\tau}u^{\sigma_2*}_{\tau}=\delta_{\sigma_1,\sigma_2}q^{s(\sigma_1)}_k,\quad&\text{and}\quad
\sum_{\tau\in E_l}u^{\sigma_1*}_{\tau}u^{\sigma_2}_{\tau}=\delta_{\sigma_1,\sigma_2}r^{t(\sigma_1)}_l;\label{u_q_relations}\\
q^{s(\sigma)}_{i}u^{\sigma}_{\tau}=\delta_{i,s(\tau)}u^{\sigma}_{\tau}\quad
&\text{and}\quad u^{\sigma}_{\tau}r^{t(\sigma)}_{j}=\delta_{t(\tau),j}u^{\sigma}_{\tau}\label{bimod_coeff}
\end{align}  
where $(u^{\sigma}_{\tau})_{\sigma,\tau\in E}$, $(q^i_k)_{i,k\in V^s}$, $(r^j_l)_{j,l\in V^t}$ are co-representation matrices of $\beta$, $\alpha_s$ and $\alpha_t$.

\subsection{Induced permutations on $V^s\cap V^t$:}\label{induced_perm}
It is not enough  to only consider $C(V^s)-L^2(E)-C(V^t)$ bimodule structure as there does exist non-isomorphic single edged graphs which have non-isomorphic quantum automorphism groups but isomorphic $C(V^s)-L^2(E)-C(V^t)$ bimodule structure. See the graphs in figure \ref{Two graphs with isomorphic $C(V^s)-L^2(E)-C(V^t)$ bimodule structure.} for example, where the left one does not have any quantum symmetry (in Banica's sense) but the right one does have. 
\begin{figure}[h]
	\centering
	\begin{tikzpicture}
		\node at (-1,0) {$\bullet$};
		\node at (1,0) {$\bullet$};
		\draw[thick, out=45, in=135, looseness=1.2] (-1,0) to node{\midarrow} (1,0);
		\draw[thick, out=-45, in=-135, looseness=1.2] (-1,0) to node{\backmidarrow} (1,0);
		\draw[thick, out=135, in=90, looseness=1.2] (-1,0)   to (-2.5, 0);
		\draw[thick, out=-90, in=-135, looseness=1.2] (-2.5,0) to node{\midarrow} (-1,0);
		\draw[thick, out=45, in=90, looseness=1.2] (1,0) to node{\midarrow} (2.5,0);
		\draw[thick, out=-90, in=-45, looseness=1.2] (2.5,0) to (1,0);
		\node at (-1,-.5) {$1$};
		\node at (1,-.5) {$2$};
	\end{tikzpicture}
	\quad\quad\quad\quad
	\begin{tikzpicture}
		\node at (-1,-1) {$\bullet$};
		\node at (1,-1) {$\bullet$};
		\node at (1,1) {$\bullet$};
		\node at (-1,1) {$\bullet$};
		\draw[thick] (-1,-1) to node{\midarrow} (1,-1);
		\draw[thick] (-1,-1) edge[bend left] node{\slantmidarrow}  (1,1);
		\draw[thick](-1,1) edge[bend right] node{\opslantmidarrow} (1,-1);
		\draw[thick] (-1,1) to node{\midarrow} (1,1);
		\node at (-1.5,-1) {$2$};
		\node at (1.5,-1) {$4$};
		\node at (1.5,1) {$3$};
		\node at (-1.5,1) {$1$};
	\end{tikzpicture}
	\caption{Two graphs with isomorphic $C(V^s)-L^2(E)-C(V^t)$ bimodule structure.}
	\label{Two graphs with isomorphic $C(V^s)-L^2(E)-C(V^t)$ bimodule structure.}
\end{figure}
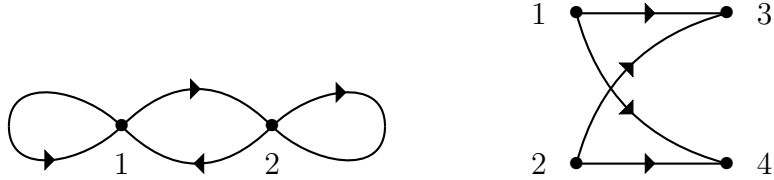
Continuing our investigations further, we found that it is important to consider right equivariance of $\alpha_s$ and left equivariance of $\alpha_t$ on the set of edges with at least one of their endpoints in $V^s\cap V^t$. We propose the following result:

\begin{thm}\label{source_targ_consis_thm}
	Let $\beta:L^2(E)\rightarrow L^2(E)\otimes \mathcal{A}$ be a bi-unitary co-representation of a CQG $(\mathcal{A},\Delta)$ such that the following conditions  hold:
	\begin{enumerate}
		\item $Ad_{\beta}(S(C(V^s)))\subseteq S(C(V^s))\otimes \mathcal{A}$.
		\item $Ad_{\overline{\beta}}(T(C(V^t)))\subseteq T(C(V^t))\otimes \mathcal{A}$.	
	\end{enumerate}
	Furthermore, we assume that the induced co-actions  $\alpha_s$ and $\alpha_t$ (see subsection \ref{alpha-s_alpha-t_rem_simp}) both preserve $C(V^s\cap V^t)$, that is, 
	\begin{align*}
		\alpha_s(C(V^s \cap V^t))&\subseteq C(V^s\cap   V^t)\otimes\mathcal{A}\subseteq C(V^s)\otimes \mathcal{A},\\  
		\alpha_t(C(V^s\cap V^t))&\subseteq C(V^s\cap V^t)\otimes\mathcal{A}\subseteq C(V^t)\otimes \mathcal{A}.
	\end{align*} 
	Then the following conditions are equivalent:
	\begin{enumerate}
		\item $\alpha_s|_{C(V^s\cap V^t)}=\alpha_t|_{C(V^s\cap V^t)}$.
		\item For all $j\in V^s\cap V^t$ and $\tau\in E$,
		\begin{equation*}
			\beta(\chi_\tau)\alpha_s(\chi_j)=\beta(\chi_\tau .\chi_j).
		\end{equation*}
		\item For all $i\in V^s\cap V^t$ and $\tau\in E$,
		\begin{equation*}
			\alpha_t(\chi_i)\beta(\chi_{\tau})=\beta(\chi_i.\chi_\tau).
		\end{equation*}
	\end{enumerate}
\end{thm} 
\begin{proof}
	
	We define $E_{V^s},E^{V^t}\subseteq E$  by 
	\begin{align*}
		E_{V^s}=\{\tau\in E|t(\tau)\in V^s\cap V^t\}\quad\text{and}\quad
		E^{V^t}=\{\tau\in E|s(\tau)\in V^s\cap V^t\}.
	\end{align*} 
	We make some observations first.
	Let $(u^{\sigma}_{\tau})_{\sigma,\tau\in E}$, $(q^l_j)_{l,j\in V^s}$ and $(r^k_i)_{k,i\in V^t}$ be co-representation matrices of $\beta$, $\alpha_s$ and $\alpha_t$ respectively. 
	For $\tau\in E$ and $j\in V^s\cap V^t$ we observe that,
	\begin{align*}
		\beta(\chi_{\tau})\alpha_s(j)=(\sum_{\sigma\in E}\chi_{\sigma}\otimes u^{\sigma}_{\tau})(\sum_{l\in V^s\cap V^t}\chi_l\otimes q^l_j)
		=\sum_{\sigma\in E_{V^s}}\chi_{\sigma}\otimes u^{\sigma}_{\tau}q^{t(\sigma)}_j.
	\end{align*}
	Hence for all $j\in V^s\cap V^t$ and $\tau\in E$, $$\beta(\chi_{\tau})\alpha_s(j)=\beta(\chi_\tau . \chi_j)$$ if and only if 
	\begin{align}
		&\beta(L^2(E_{V^s}))\subseteq L^2(E_{V^s}) \otimes \mathcal{A}\quad\text{and}\notag\\
		&u^{\sigma}_{\tau}q^{t(\sigma)}_j=\delta_{t(\tau),j}u^{\sigma}_{\tau}\quad\text{whenever}\quad \sigma\in E_{V^s}.\label{restr_bi-mod_targ} 
	\end{align}
	Similarly it also follows that, for all $i\in V^s\cap V^t$ and $\tau\in E$ $$\alpha_t(\chi_{i})\beta(\chi_\tau)=\beta(\chi_i.\chi_\tau)$$ if and only if 
	\begin{align}
		&\beta(L^2(E^{V^t}))\subseteq L^2(E^{V^t})\otimes \mathcal{A}\quad\text{and}\notag\\
		& r^{s(\sigma)}_iu^\sigma_\tau=\delta_{i,s(\tau)}u^\sigma_\tau\quad\text{whenever}\quad \sigma\in E^{V^t}\label{restr_bi-mod_source}.
	\end{align}
	Now we proceed to prove our theorem.\\
	\textbf{Claim:}$(1)\implies (2)$;$(1)\implies (3)$\\
	As  $\alpha_s|_{C(V^s\cap V^t)}=\alpha_t|_{C(V^s\cap V^t)}$, for $i\in V^s\cap V^t$ we have,
	\begin{align*}
		q^k_i&=r^k_i\quad\text{when}\quad k\in V^s\cap V^t\quad\text{and}\\
		q^k_i&=0=r^l_i\quad\text{when}\quad k\in V^s\setminus  V^t, l\in V^t\setminus V^s. 
	\end{align*}
	From above expressions, theorem \ref{bi-mod_equiv_targ} and theorem \ref{bi-mod_equiv_source} it follows that, for $\sigma,\tau\in E$, 
	\begin{align*}
		u^{\sigma}_{\tau}&=u^{\sigma}_{\tau}r^{t(\sigma)}_{t(\tau)}=0\quad\text{whenever}\quad \sigma\notin E_{V^s}\quad \text{but}\quad\tau\in E_{V^s}\\ \text{and}\quad
		u^{\sigma}_{\tau}&=q^{s(\sigma)}_{s(\tau)}u^{\sigma}_{\tau}=0\quad\text{whenever}\quad \sigma\notin E^{V^t}\quad \text{but}\quad\tau\in E^{V^t}.
	\end{align*}
	Hence we have, 
	\begin{align*}
		\beta(L^2(E_{V^s}))\subseteq L^2(E_{V^s})\otimes \mathcal{A}\quad\text{and}\quad
		\beta(L^2(E^{V^t}))\subseteq L^2(E^{V^t})\otimes \mathcal{A}.
	\end{align*}
	We further observe that for $i,j\in V^s\cap V^t, \sigma_1\in E_{V^s}$ and $\sigma_2\in E^{V^t}$, 
	\begin{align*}
		u^{\sigma_1}_{\tau}q^{t(\sigma_1)}_j=u^{\sigma_1}_{\tau}r^{t(\sigma_1)}_j=\delta_{j,t(\tau)}u^{\sigma_1}_{\tau}\quad 
		\text{and}\quad r^{s(\sigma_2)}_iu^{\sigma_2}_{\tau}=q^{s(\sigma_2)}_iu^{\sigma_2}_{\tau}=\delta_{i,s(\tau)}u^{\sigma_2}_{\tau}.
	\end{align*}
	As our choice of $i,j,\sigma_1,\sigma_2$ was arbitrary, from observations   \ref{restr_bi-mod_targ} and \ref{restr_bi-mod_source}, (2) and (3) follow.\\
	\textbf{Claim:}$(2)\implies (1)$.\\
	Let $i,k\in V^s\cap V^t$ and $\sigma\in E$ be such that $t(\sigma)=k$. Using equations \ref{u_q_relations} and \ref{restr_bi-mod_targ} we observe that, 
	\begin{equation}\label{source_targ_perm_consis_4}
		r^k_i=\sum_{\tau\in E_i}{u^{\sigma}_{\tau}}^*u^{\sigma}_{\tau}=\sum_{\tau\in E_i}{u^{\sigma}_{\tau}}^*u^{\sigma}_{\tau}q^k_i=r^k_iq^k_i
	\end{equation}
	Hence it follows that,
	\begin{equation}
		r^k_i\leq q^k_i \quad\text{for all}\quad i,k\in V^s\cap V^t.
	\end{equation} 
	As coefficients of both matrices  $(q^k_i)_{k,i\in V^s\cap V^t}$ and $(r^k_i)_{k,i\in V^s\cap V^t}$ satisfy quantum permutation relations it follows that, for $i\in V^s\cap V^t$,
	
	\begin{align*}
		1=&\sum_{k\in V^s\cap V^t}r^k_i\leq \sum_{k\in V^s\cap V^t}q^k_i=1
	\end{align*}
	As $\{r^k_i\:|\:k\in V^s\cap V^t\}$ and $\{q^k_i\:|\:k\in V^s\cap V^t\}$ both are sets of mutually orthogonal projections, we have $$q^k_i=r^k_i\quad  \text{for all}\quad  i,k\in V^s\cap V^t.$$
	Therefore (1) follows.\\
	\textbf{Claim:}$(3)\implies (1)$\\
	Let $i,k\in V^s\cap V^t$ and $\sigma\in E$ be such that $s(\sigma)=k$. Using equations \ref{u_q_relations} and \ref{restr_bi-mod_source} we observe that,
	\begin{equation*}
		q^k_i=\sum_{\tau\in E^i}u^{\sigma}_{\tau}u^{\sigma*}_{\tau}=r^k_i(\sum_{\tau\in E^i}u^{\sigma}_{\tau}u^{\sigma*}_{\tau})=r^k_iq^k_i.
	\end{equation*}
	hence it follows that,
	\begin{equation*}
		q^k_i\leq r^k_i\quad\text{for all}\quad i,k\in V^s\cap V^t.
	\end{equation*}
	Using similar arguments used in the previous case, (1) follows.
\end{proof}
\subsection{Co-actions on a multigraph}\label{equiv_def_quan_sym_simp}
In light of above discussions, we introduce the notion of quantum symmetry preserving co-action on a multigraph formulated in terms of  bi-unitary maps. 
\begin{defn}\label{maindef}
	A compact quantum group $(\mathcal{A},\Delta)$ is said to co-act on a multigraph $(V,E)$ \textbf{preserving its quantum symmetry in Banica's sense} if there exists a bi-unitary co-representation $\beta:L^2(E)\rightarrow L^2(E)\otimes \mathcal{A}$ such that the following conditions hold:
	\begin{enumerate}
		\item $Ad_{\beta}(S(C(V^s)))\subseteq S(C(V^s))\otimes \mathcal{A}$.
		\item $Ad_{\overline{\beta}}(T(C(V^t)))\subseteq T(C(V^t))\otimes \mathcal{A}$.
		\item The induced co-actions $\alpha_s$ and $\alpha_t$ (see remark \ref{alpha-s_alpha-t_rem_simp}) preserve and agree on $C(V^s\cap V^t)$, that is,
		\begin{equation*}
			\alpha_s|_{C(V^s\cap V^t)}=\alpha_t|_{C(V^s\cap V^t)}.
		\end{equation*}
		\item $\beta$ fixes the element  $\xi_0:=\sum_{\tau\in E}\chi_{\tau}\in L^2(E)$, that is,
		\begin{equation*}
			\beta(\xi_0)=\xi_0\otimes 1_{\mathcal{A}}.
		\end{equation*}
	\end{enumerate}
	Moreover, if $\beta$ is a co-action on the algebra $C(E)$, that is, a \textbf{quantum permutation} of the edge set $E$, then we say that $(\mathcal{A},\Delta)$ \textbf{co-acts on $(V,E)$ preserving its quantum symmetry in Bichon's sense}.
\end{defn}
\begin{rem}\label{ver_quan_per}
	The reason we have used names of Banica and Bichon in definition \ref{maindef} is because in the context of single edged graphs, above definition is an equivalent description of definition \ref{maindef_simp} (see remark \ref{ban_iden1_rem_bic} and proposition \ref{banica_iden2})\par  
	 From condition (3) of definition \ref{maindef} it follows that the co-actions $\alpha_s$ and $\alpha_t$ induce a co-action $\alpha: C(V)\rightarrow C(V)\otimes \mathcal{A}$ by  
	\begin{align*}
	\alpha(\chi_k)&=\alpha_s(\chi_k)\quad\text{if}\quad k\in V^s,\\
	&=\alpha_t(\chi_k)\quad\text{if}\quad k\in V^t.			
	\end{align*}
	$\alpha$ is the required ``permutation" of vertices derived from ``permutation" of edges $E$ and will be referred as \textbf{induced permutation on the set of vertices of $(V,E)$}. In that case, as our next theorem states, we can now treat $L^2(E)$ as $C(V)-C(V)$ bimodule instead of  $C(V^s)-C(V^t)$ bimodule.
\end{rem}
\begin{thm}\label{bimod_prop_equiv}
	Let $\beta:L^2(E)\rightarrow L^2(E)\otimes \mathcal{A}$ be a bi-unitary co-representation and $\alpha:C(V)\rightarrow C(V)\otimes \mathcal{A}$ be a co-action of a CQG $(\mathcal{A},\Delta)$.  The following are equivalent:
	\begin{enumerate}
		\item $\alpha(\chi_i).\beta(\chi_\sigma)=\beta(\chi_i.\chi_\sigma)$ and $\beta(\chi_\sigma).\alpha(\chi_j)=\beta(\chi_\sigma.\chi_j)$ for all $i\in V^s,j\in V^t,\sigma\in E$.
		\item  $\alpha(\chi_i).\beta(\chi_\sigma)=\beta(\chi_i.\chi_\sigma)$ and $\beta(\chi_\sigma).\alpha(\chi_i)=\beta(\chi_\sigma.\chi_i)$ for all $i\in V,\sigma\in E$.
	\end{enumerate}
\end{thm}
\begin{proof}
	Let $(u^{\sigma}_{\tau})_{\sigma,\tau\in E}$ and $(q^i_j)_{i,j\in V}$ be the co-representation matrices of $\alpha$ and $\beta$.\par 
	$(1)\implies (2)$.\\
	Let us consider $\sigma\in E,i\in V^s$ and $j\in V^t$. As $\beta$ is bi-unitary, we observe that,
	\begin{align*}
		q^{s(\sigma)}_i&=q^{s(\sigma)}_i(\sum_{\tau\in E}u^{\sigma}_{\tau}u^{\sigma*}_{\tau})q^{s(\sigma)}_i=\sum_{\tau\in E^i}u^{\sigma}_{\tau}u^{\sigma*}_{\tau};\\
		q^{t(\sigma)}_{j}&=q^{t(\sigma)}_{j}(\sum_{\tau\in E}u^{\sigma*}_{\tau} u^{\sigma}_{\tau})q^{t(\sigma)}_{j}=\sum_{\tau\in E_j}u^{\sigma*}_{\tau} u^{\sigma}_{\tau}.
	\end{align*}
	We therefore have,
	\begin{equation*}
		\sum_{i\in V^s}q^{s(\sigma)}_i=\sum_{\tau\in E}u^{\sigma}_{\tau}u^{\sigma*}_{\tau}=1\quad\text{and}\quad 
		\sum_{j\in V^t}q^{t(\sigma)}_j=\sum_{\tau\in E}u^{\sigma*}_{\tau}u^{\sigma}_{\tau}=1.
	\end{equation*}
	Hence for $i\notin V^s$ and $j\notin V^t$,
	\begin{equation}\label{source_targ_coeff_presv}
		q^{s(\sigma)}_i=0\quad \text{and}\quad q^{t(\sigma)}_j=0.
	\end{equation}
	From our assumption and the observation made above, it follows that, for all $i\in V$ and $\sigma,\tau\in E$,
	\begin{align*}
		q^{s(\sigma)}_iu^{\sigma}_{\tau}&=\delta_{i,s(\tau)}u^{\sigma}_{\tau}\quad \text{when}\quad i\in V^s;\\
		&=0\\
		&=\delta_{i,s(\tau)}u^{\sigma}_{\tau}\quad\text{when}\quad i\notin V^s.
	\end{align*}
Similar identities hold for the target case. Therefore (2) is true. \\
	The converse $(2)\implies (1)$ is obvious. 
\end{proof}
From theorems \ref{bimod_prop_equiv}, \ref{bi-mod_equiv_source} and \ref{bi-mod_equiv_targ}, we have the following characterisation of Banica's notion of quantum symmetry (see definition \ref{maindef}).
\begin{cor}\label{bimod_prop_equiv_cor}
Let $\beta:L^2(E)\rightarrow L^2(E)\otimes \mathcal{A}$ be a bi-unitary co-representation of a CQG $(\mathcal{A},\Delta)$ on $L^2(E)$. Then $\beta$ preserves \textbf{quantum symmetry of $(V,E)$ in Banica's sense} if and only if the following conditions hold:  
\begin{enumerate}
\item $\beta(\xi_0)=\xi_0\otimes 1$ where $\xi_0=\sum_{\tau\in V}\chi_\tau$.
\item There exists an $\alpha:C(V)\rightarrow  C(V)\otimes\mathcal{A}$ such that for all $i\in V$ and $\tau\in E$,
    \begin{equation*}
        \alpha(\chi_i).\beta(\chi_\tau)=\beta(\chi_i.\chi_\tau)\quad\text{and}\quad\beta(\chi_\tau).\alpha(\chi_i)=\beta(\chi_\tau.\chi_i)
    \end{equation*}
\end{enumerate}

\end{cor}
\subsection{\textbf{Some useful identities:}} 
 The next two results show that our notions of quantum symmetry in multigraphs is consistent with the picture of quantum symmetry in single edged graphs.
\begin{prop}\label{banica_iden1}
	Let $\beta$ be a co-action of a CQG $(\mathcal{A},\Delta)$ on $(V,E)$ preserving its quantum symmetry in Banica's sense. Let $(u^{\sigma}_{\tau})_{\sigma,\tau\in E}$ and $(q^i_j)_{i,j\in V}$ be the co-representation matrices of $\beta$ and induced co-action $\alpha$ on $C(V)$. 
	For $i,j,k,l\in V$  with $E^i_j\neq \phi$ and $E^k_l\neq \phi$, we have the following:
	\begin{enumerate}
		\item For any $\sigma\in E^i,$ we have $\sum_{\tau\in E^k}u^{\sigma}_{\tau}=q^i_k$.
		\item For any $\sigma\in E_j$, we have $\sum_{\tau\in E_l}u^{\sigma}_{\tau}=q^j_l$.
		\item For any $\sigma\in E^i_j$, we have $\sum_{\tau\in E^k_l}u^{\sigma}_{\tau}=q^i_kq^j_l$.
	\end{enumerate}
	
	As $\beta:L^2(E)\rightarrow L^2(E)\otimes\mathcal{A}$ is bi-unitary co-representation, by using antipode on the underlying Hopf* algebra of matrix elements of $(\mathcal{A},\Delta)$, it follows that the above identities are true if we consider sum in upper indices instead of lower indices.
\end{prop}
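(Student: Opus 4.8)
The plan is to read off the three lower-index identities from ingredients that are already available in the excerpt, and then to obtain the upper-index versions by a single application of the antipode. The tools I would use are: the bimodule identities of Proposition~\ref{bimod_prop} (see~\eqref{bimod_coeff}), which, after specialising their free index, read $u^{\sigma}_{\tau}=q^{s(\sigma)}_{s(\tau)}u^{\sigma}_{\tau}$ and $u^{\sigma}_{\tau}=u^{\sigma}_{\tau}q^{t(\sigma)}_{t(\tau)}$ for all $\sigma,\tau\in E$; the normalisation $\sum_{\tau\in E}u^{\sigma}_{\tau}=1$ for every $\sigma$, which is precisely $\beta(\xi_{0})=\xi_{0}\otimes 1$ (see~\eqref{trace_presrv}); and the quantum permutation relations for the induced co-action $\alpha$, so that for each fixed $i$ the elements $\{q^{i}_{k}\}_{k\in V}$ are mutually orthogonal projections with $\sum_{k\in V}q^{i}_{k}=1$, and likewise for $\{q^{j}_{l}\}_{l\in V}$. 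Note that, unlike in the simple-graph case, the individual $u^{\sigma}_{\tau}$ are not expected to be products of the $q$'s; the content of the proposition is that the sums over the source/target fibres still are.

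For identity (1) I would fix $\sigma\in E^{i}$ and set $b_{k}:=\sum_{\tau\in E^{k}}u^{\sigma}_{\tau}$ for $k\in V$. Since $s(\tau)=k$ for $\tau\in E^{k}$, the first bimodule identity gives $b_{k}=q^{i}_{k}b_{k}$, while $\sum_{k\in V}b_{k}=1$ by the normalisation. Multiplying $1=\sum_{k'\in V}b_{k'}$ on the left by $q^{i}_{k}$ and using orthogonality of the $q^{i}_{k'}$, every cross term $q^{i}_{k}b_{k'}=q^{i}_{k}q^{i}_{k'}b_{k'}$ with $k'\neq k$ vanishes, and the surviving term equals $q^{i}_{k}b_{k}=b_{k}$, so $q^{i}_{k}=b_{k}$. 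Identity (2) is the mirror statement: fix $\sigma\in E_{j}$, put $c_{l}:=\sum_{\tau\in E_{l}}u^{\sigma}_{\tau}$, use the second bimodule identity to get $c_{l}=c_{l}q^{j}_{l}$, and multiply $1=\sum_{l'\in V}c_{l'}$ on the right by $q^{j}_{l}$.

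For identity (3) I would fix $\sigma\in E^{i}_{j}$ and set $d_{kl}:=\sum_{\tau\in E^{k}_{l}}u^{\sigma}_{\tau}$; the first bimodule identity again gives $d_{kl}=q^{i}_{k}d_{kl}$. Because $E_{l}=\bigsqcup_{k'\in V}E^{k'}_{l}$, summing over $k'$ and applying part~(2) (legitimate since $\sigma\in E^{i}_{j}\subseteq E_{j}$) yields $\sum_{k'\in V}d_{k'l}=q^{j}_{l}$. Multiplying this on the left by $q^{i}_{k}$ and invoking orthogonality once more, only the $k'=k$ term survives, so $q^{i}_{k}q^{j}_{l}=q^{i}_{k}d_{kl}=d_{kl}$, which is (3).

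Finally, for the upper-index versions I would apply the antipode $\kappa$ of the underlying Hopf $*$-algebra of $(\mathcal{A},\Delta)$ to (1)--(3) and take adjoints: since $\beta$ is unitary, $\kappa(u^{\sigma}_{\tau})=(u^{\tau}_{\sigma})^{*}$, and since the $q^{i}_{j}$ are self-adjoint projections forming the co-representation matrix of a quantum permutation, $\kappa(q^{i}_{j})=(q^{j}_{i})^{*}=q^{j}_{i}$; so, for example, (1) turns into $\sum_{\tau\in E^{k}}u^{\tau}_{\sigma}=q^{k}_{i}$, the same statement with the sum over the upper index. I expect no genuine obstacle: the computations are mechanical, and the only delicate point — that $(\mathcal{A},\Delta)$ need not be of Kac type — is harmless, because $\kappa$ is used only through the relation $\kappa(v_{ij})=v_{ji}^{*}$, valid for the matrix coefficients of any unitary co-representation, and through its action on the Kac-type Woronowicz subalgebra generated by the $q^{i}_{j}$, on which it is a $*$-anti-automorphism.
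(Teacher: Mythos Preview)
Your argument is correct and follows essentially the same route as the paper: both proofs multiply the normalisation $\sum_{\tau\in E}u^{\sigma}_{\tau}=1$ by the vertex projections and use the bimodule identities~\eqref{bimod_coeff} to isolate the relevant fibre, then invoke the antipode for the upper-index statements. The paper's write-up is marginally more direct (it applies $q^{i}_{k}u^{\sigma}_{\tau}=\delta_{k,s(\tau)}u^{\sigma}_{\tau}$ term-by-term to the full sum rather than first packaging into the $b_{k}$ and appealing to orthogonality, and handles (3) in one line by sandwiching with $q^{i}_{k}$ and $q^{j}_{l}$), but the mechanism is identical.
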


\begin{proof}
 From (4) of definition \ref{maindef}  we have,
	
	\begin{equation*}
		\sum_{\sigma\in E}u^{\sigma}_{\tau}=1=\sum_{\sigma\in E}u^{\tau}_{\sigma}\quad \text{for all}\quad \tau\in E.
	\end{equation*}
	For $\sigma\in E^i$, using equation \ref{bimod_coeff} we observe that,
	\begin{equation*}
		q^i_k=q^i_k(\sum_{\tau\in E}u^{\sigma}_{\tau})
		=\sum_{\tau\in E^k}u^{\sigma}_{\tau}.
	\end{equation*}
	For $\sigma\in E_j$, using equation \ref{bimod_coeff} we observe that,
	\begin{equation*}
		q^j_l=(\sum_{\tau\in E}u^{\sigma}_{\tau})q^j_l
		=\sum_{\tau\in E_l}u^{\sigma}_{\tau}.
	\end{equation*}
	For $\sigma\in E^i_j$, using equation \ref{bimod_coeff} we note that,
	\begin{equation*}
		q^i_kq^j_l=q^i_k(\sum_{\tau\in E}u^{\sigma}_{\tau})q^j_l\\
		=\sum_{\tau\in E^k_l}u^{\sigma}_{\tau}.
	\end{equation*}
\end{proof}
\begin{rem}\label{ban_iden1_rem_bic}
  If $(V,E)$ is a single edged graph, using (3) of proposition \ref{banica_iden1} it follows that $\beta=\alpha^{(2)}$ where $\alpha$ is the induced permutation on the vertex set.\par 
   Moreover, if $\beta$ is a quantum permutation on the edge set $E$ that is, a co-action on $(V,E)$ preserving its quantum symmetry in Bichon's sense,  it follows that $q^i_k$ and $q^j_l$ commute with each other whenever both $E^i_j$ and $E^k_l$ are nonempty. 
\end{rem}

\begin{prop}\label{banica_iden2}
	Let $\beta$ be a co-action of a CQG $(\mathcal{A},\Delta)$ on $(V,E)$ by preserving its quantum symmetry in Banica's sense.
	The following identity holds:
	$$QW=WQ.$$
	where $Q=(q^i_j)_{i,j\in V}$ is the co-representation matrix of the induced co-action on $C(V)$ and $W$ is the adjacency matrix of $(V,E)$.
\end{prop}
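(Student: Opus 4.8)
The plan is to read off the identity $QW = WQ$ directly from the combinatorial identities in Proposition \ref{banica_iden1}, which express the vertex-level coefficients $q^i_k$ as sums of edge-level coefficients $u^\sigma_\tau$. First I would fix $i,j \in V$ and compute the $(i,j)$-entry of $WQ$ and of $QW$ separately, keeping in mind that $W^i_j = |E^i_j|$ counts the number of edges from $i$ to $j$, so $(WQ)^i_j = \sum_{k\in V} W^i_k q^k_j = \sum_{k\in V} |E^i_k|\, q^k_j$ and similarly $(QW)^i_j = \sum_{k\in V} q^i_k W^k_j = \sum_{k\in V} q^i_k |E^k_j|$. The idea is that both of these sums should count, in two different ways, the same object, namely $\sum_{\sigma \in E^i,\ \tau \in E_j} u^\sigma_\tau$ (or a close variant of it).

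The key step is the bookkeeping. For the right-hand side, I would use part (1) of Proposition \ref{banica_iden1}: for a fixed $\sigma \in E^i$ we have $\sum_{\tau \in E^k} u^\sigma_\tau = q^i_k$, and hence, summing the appropriate pieces, $\sum_{\tau \in E_j} u^\sigma_\tau$-type sums regroup into the $q$'s with the right multiplicities. Concretely, if $E^i_k \neq \phi$ pick any $\sigma_k \in E^i_k$; then $|E^i_k|\, q^k_j$ should be matched against a double sum over edges $\sigma$ starting at $i$ and edges $\tau$ ending at $j$, where the source of $\tau$ ranges over $k$. Doing the symmetric manipulation with part (2) of Proposition \ref{banica_iden1} for the left-hand side, and using that both families $\{q^i_k\}_k$ and $\{q^k_j\}_k$ consist of orthogonal projections summing to $1$ (so that empty fibers $E^i_k = \phi$ contribute nothing because the corresponding edge sums vanish), the two regroupings should coincide term by term. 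One has to be a little careful about the non-commutativity of $\mathcal{A}$: the products $q^i_k q^k_j$ need not be symmetric, so I would make sure the counting argument never needs to commute a $q$ past another $q$ — it should only ever move $q$'s past sums of $u$'s using the bimodule relations \eqref{bimod_coeff}, exactly as in Proposition \ref{banica_iden1}.

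The cleanest route is probably to avoid reproving anything and instead quote Theorem \ref{wt_preserve}: the identity $QW = WQ$ for the integer matrix $W$ is equivalent to $\alpha^{(2)}(K^c) \subseteq K^c \otimes \mathcal{A}$ for every $c$, where $K^c$ is the span of those $\chi_k \otimes \chi_l$ with $W^k_l = c$, i.e. those pairs $(k,l)$ with exactly $c$ edges between them. So it suffices to show $\alpha^{(2)}$ preserves each $K^c$, and for that I would show that $\alpha^{(2)}(\chi_k \otimes \chi_l) = \sum_{i,j} \chi_i \otimes \chi_j \otimes q^i_k q^j_l$ has zero coefficient whenever $W^i_j \neq W^k_l$; but $\sum_{\tau \in E^i_j} u^\sigma_\tau = q^i_k q^j_l$ for any $\sigma \in E^k_l$ by part (3) of Proposition \ref{banica_iden1}, and if $W^k_l = 0$ there is no such $\sigma$, forcing (via $\xi_0$-invariance, i.e. $\sum_{\tau\in E} u^\sigma_\tau = 1$, together with a Haar-state faithfulness argument as in the proof of Proposition \ref{Ban_bi-mod_charac}) that $q^i_k q^j_l = 0$ when $W^i_j \neq W^k_l$.

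The main obstacle I anticipate is precisely this last faithfulness/positivity argument: from $\sum_{\tau \in E^k_l} u^\sigma_\tau = q^i_k q^j_l$ one does not immediately get that mismatched pairs kill each other, and one has to run an argument like the one in the $(2)\implies(1)$ direction of Proposition \ref{Ban_bi-mod_charac} — restrict the Haar state $h$ to the (Kac-type, hence tracial) Woronowicz subalgebra generated by the $q^i_j$, show the relevant positive sum of $h(q^i_k q^j_l q^i_k)$ vanishes, and conclude $\|q^i_k q^j_l\| = 0$. Alternatively, if one wants a purely algebraic proof, the direct double-counting of $(WQ)^i_j$ and $(QW)^i_j$ sketched above also works and may in fact be shorter; I would present that version and relegate the $\alpha^{(2)}$-picture to a remark. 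Either way the content is entirely a reorganization of Proposition \ref{banica_iden1}, so no genuinely new estimate is needed.
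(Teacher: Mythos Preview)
Your direct double-counting approach is exactly what the paper does: fix $i\in V^s$, $j\in V^t$, choose representatives $\tau_k\in E^k_j$ and $\sigma_k\in E^i_k$, and use Proposition~\ref{banica_iden1} (in both its lower- and upper-index forms) to rewrite $(QW)^i_j$ and $(WQ)^i_j$ as the common sum $\sum_{\sigma\in E^i,\ \tau\in E_j} u^\sigma_\tau$; the boundary case $i\notin V^s$ or $j\notin V^t$ is dispatched via \eqref{source_targ_coeff_presv}. Your alternative route through Theorem~\ref{wt_preserve} and a Haar-state argument is unnecessary here (and in fact the paper derives Corollary~\ref{cor_ban_iden2} \emph{from} this proposition rather than the other way around), so you are right to prefer the direct argument.
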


\begin{proof}
	Let $(u^{\sigma}_{\tau})_{\sigma,\tau\in E}$ be the co-representation matrix of $\beta$. \par 
	Let us fix $i,j\in V$. If $i\notin V^s$ or $j\notin V^t$, using equation \ref{source_targ_coeff_presv} it follows that,
	\begin{equation*}
		(QW)^i_j=0=(WQ)^i_j.
	\end{equation*} 
	Hence let us assume $i\in V^s$ and $j\in V^t$. For each $k\in V$ with $W^k_j\neq 0$ we fix an element $\tau_k$ in $E^k_j$. In a similar way, for each $k\in V$ with $W^i_k\neq 0$ we fix an element $\sigma_k$ in $E^i_k$. We observe that,
	\begin{align*}
		(QW)^i_j&=\sum_{k\in V}q^i_k W^k_j=\sum_{\substack{k\in V\\W^k_j\neq 0}}W^k_j(\sum_{\sigma\in E^i}u^{\sigma}_{\tau_{k}})
		=\sum_{\substack{k\in V\\W^k_j\neq 0}}(\sum_{\substack{\sigma\in E^i\\\tau\in E^k_j}}u^{\sigma}_{\tau})=\sum_{\substack{\sigma\in E^i\\ \tau\in E_j}}u^{\sigma}_{\tau};\\
	    (WQ)^i_j&=\sum_{k\in V}W^i_k q^k_j=\sum_{\substack{k\in V\\W^i_k\neq 0}}W^i_k(\sum_{\tau\in E_j}u^{\sigma_k}_{\tau})=\sum_{\substack{k\in V\\W^i_k\neq 0}}(\sum_{\substack{\sigma\in E^i_k\\ \tau\in E_j}}u^{\sigma}_{\tau})=\sum_{\substack{\sigma\in E^i\\ \tau\in E_j}}u^{\sigma}_{\tau}.
	\end{align*}
	Hence our claim is proved.
\end{proof}	
\subsection{The categories $\mathcal{C}^{Ban}_{(V,E)}$, and $\mathcal{C}^{Bic}_{(V,E)}$:} \label{exist_univ_obj}
\begin{defn}
	Let $\beta$ and $\beta'$ be co-actions of two compact quantum groups $(\mathcal{A},\Delta)$ and $(\mathcal{A}',\Delta')$ on $(V,E)$ which preserve its quantum symmetry in Banica's sense. Then $\Phi:(\mathcal{A},\Delta)\rightarrow(\mathcal{A}',\Delta')$, a quantum group homomorphism, is said to intertwine $\beta$ and $\beta'$ if the following diagram commutes:
	\begin{center}
		\begin{tikzcd}
			L^2(E) \arrow[r,"\beta'"]\arrow[d,"\beta"]  & L^2(E)\otimes \mathcal{A}'\\
			L^2(E)\otimes \mathcal{A} \arrow[ru, "id\otimes\Phi"]
		\end{tikzcd}
	\end{center}  
	Let us consider the category $\mathcal{C}^{Ban}_{(V,E)}$ whose objects are triplets $(\mathcal{A}, \Delta_{\mathcal{A}},\beta_{\mathcal{A}})$ where $\beta_{\mathcal{A}}$ is a co-action of a CQG $(\mathcal{A}, \Delta_{\mathcal{A}})$ on $(V,E)$ preserving its quantum symmetry in Banica's sense. Morphisms in this category are quantum group homomorphisms intertwining two such co-actions.
	\par 
	Similarly, we consider the category   $\mathcal{C}^{Bic}_{(V,E)}$ whose objects are compact quantum groups co-acting on $(V,E)$ preserving its quantum symmetry in Bichon's sense and morphisms are quantum group homomorphisms intertwining two similar type co-actions. 
\end{defn}
 Using standard techniques from the theory of compact quantum groups one can show that both these categories admit universal objects (for details, see \cite{asfaq2023thesis}) namely $Q^{Ban}_{(V,E)}$ and $Q^{Bic}_{(V,E)}$. Algebraic descriptions of these two CQGs are given below:   
\begin{defn}\label{Q^Ban_def}
The \textbf{universal compact quantum group associated with a multigraph $(V,E)$}, $Q^{Ban}_{(V,E)}$  is the universal C* algebra generated by the elements of the matrix $(u^{\sigma}_{\tau})_{\sigma,\tau\in E}$ satisfying the following relations:
	\begin{enumerate}
		\item The matrices  $U:=(u^{\sigma}_{\tau})_{\sigma,\tau\in E}$ and $\overline{U}:=(u^{\sigma*}_{\tau})_{\sigma,\tau\in E}$ are both unitary, that is, 
		\begin{align*}
		\sum_{\tau\in E}u^{\sigma_1}_{\tau}u^{\sigma_2*}_{\tau}=\delta_{\sigma_1,\sigma_2}1\quad&\text{and}\quad \sum_{\tau\in E}u^{\tau*}_{\sigma_1}u^{\tau}_{\sigma_2}=\delta_{\sigma_1,\sigma_2}1;\\
		\quad \sum_{\tau\in E}u^{\sigma_1*}_{\tau}u^{\sigma_2}_{\tau}=\delta_{\sigma_1,\sigma_2}1\quad&\text{and}\quad \sum_{\tau\in E}u^{\tau}_{\sigma_1}u^{\tau*}_{\sigma_2}=\delta_{\sigma_1,\sigma_2}1
		\end{align*}
		for all $\sigma_1,\sigma_2\in E$.
		\item $\sum_{\tau\in E}u^{\sigma}_{\tau}=1$ for all $\sigma\in E$.
		\item Let $k\in V^s$. Then for all $\sigma_1,\sigma_2\in E$,
		\begin{align*}
		\sum_{\tau\in E^k}u^{\sigma_1}_{\tau}u^{\sigma_2*}_{\tau}&=0\quad\text{if}\quad\sigma_1\neq\sigma_2;\\
		\sum_{\tau\in E^k}u^{\sigma_1}_{\tau}u^{\sigma_1*}_{\tau}&=\sum_{\tau\in E^k}u^{\sigma_2}_{\tau}u^{\sigma_2*}_{\tau}\quad\text{if}\quad s(\sigma_1)=s(\sigma_2).
		\end{align*}
		\item Let $l\in V^t$. Then for all $\sigma_1,\sigma_2\in E$,
		\begin{align*}
		\sum_{\tau\in E_l}u^{\sigma_1*}_{\tau}u^{\sigma_2}_{\tau}&=0\quad\text{if}\quad\sigma_1\neq\sigma_2;\\
		\sum_{\tau\in E_l}u^{\sigma_1*}_{\tau}u^{\sigma_1}_{\tau}&=\sum_{\tau\in E_l}u^{\sigma_2*}_{\tau}u^{\sigma_2}_{\tau}\quad\text{if}\quad t(\sigma_1)=t(\sigma_2).
		\end{align*}
		\item Let $i\in V^s\setminus V^t, j\in V^t\setminus V^s$ and $k\in V^s\cap V^t$. Then for all $\sigma_1\in E^i, \sigma_2\in E_j, \tau_1\in E^k$ and $\tau_2\in E_k$,
		\begin{equation*}
		u^{\sigma_1}_{\tau_1}=0\quad\text{and}\quad u^{\sigma_2}_{\tau_2}=0.
		\end{equation*} 
		\item Let $i,k\in V^s\cap V^t$. Then for all $\sigma_1\in E^i$ and $\sigma_2\in E_i$,
		\begin{align*}
		\sum_{\tau\in E^k}u^{\sigma_1}_{\tau}u^{\sigma_1*}_{\tau}=\sum_{\tau\in E_k}u^{\sigma_2*}_{\tau}u^{\sigma_2}_{\tau}.
		\end{align*}		
	\end{enumerate} 
The \textbf{quantum automorphism group of $(V,E)$ in Bichon's sense} $Q^{Bic}_{(V,E)}$ is given by,
\begin{equation*}
	Q^{Bic}_{(V,E)}=\bigslant{Q^{Ban}_{(V,E)}}{<u^\sigma_\tau-u^{\sigma*}_{\tau}, u^\sigma_\tau-u^{\sigma2}_{\tau} >}
\end{equation*}
where $<u^\sigma_\tau-u^{\sigma*}_{\tau},  u^\sigma_\tau-u^{\sigma2}_{\tau}>$ is a two sided C* ideal in $Q^{Ban}_{(V,E)}$ generated by the set of elements $\{u^\sigma_{\tau}-u^{\sigma*}_{\tau}, u^\sigma_\tau-u^{\sigma2}_{\tau}|\sigma,\tau\in E\}$.
\end{defn}
\begin{rem}
Abelianisation of the CQG $Q^{Bic}_{(V,E)}$ essentially gives us $C(G^{aut}_{(V,E)})$ where $G^{aut}_{(V,E)}$ is the group of classical automorphisms of $(V,E)$ (see definition \ref{class_aut_mult_def}). However, abelianisation of $Q^{Ban}_{(V,E)}$ produces a group bigger than $G^{aut}_{(V,E)}$, that is, the algebra $C(G^{aut}_{(V,E)})$ is a proper quotient of abelianised $Q^{Ban}_{(V,E)}$ for any genuine multigraph $(V,E)$ (see example 1 in section \ref{chap_examp_appl}).
\end{rem}

\subsection{Restricted orthogonality:} In order to capture only automorphisms in quantum sense and provide a true generalisation of Banica's notion of quantum symmetry in the context of multigraphs, we consider a subcategory $\mathcal{C}^{Ban}_{(V,E)}$ by imposing a further condition namely \textbf{restricted orthogonality}. 
\begin{defn}\label{maindef_symmetric}
Let $\beta$ be a co-action of a CQG $(\mathcal{A},\Delta)$ on $(V,E)$ \textbf{preserving its quantum symmetry in Banica's sense}. Then $\beta$ is said to preserve \textbf{quantum symmetry of $(V,E)$ in our sense} if the following holds:
\begin{align*}
	(Ad_{\beta})^{ij}_{kl}(D_{ij})&\subseteq D_{kl}\otimes \mathcal{A}\\
	\text{and}\quad	(Ad_{\overline{\beta}})^{ij}_{kl}(D_{ij})&\subseteq D_{kl}\otimes \mathcal{A}.
\end{align*}
 for all $i,j,k,l\in V$ with $E^i_j\neq\phi$ and $E^k_l\neq\phi$. The maps $(Ad_{\beta})^{ij}_{kl}:M_{ij}\rightarrow M_{kl}\otimes\mathcal{A}$ (similarly also $(Ad_{\overline{\beta}})^{ij}_{kl}$) are defined by,
\begin{align*}
(Ad_{\beta})^{ij}_{kl}(T)&=(p_{kl}\otimes 1)Ad_{\beta}(T)(p_{kl}\otimes 1),\quad T\in M_{ij}.
\end{align*}
\end{defn}
We have the following algebraic characterisation of \textbf{restricted orthogonality}.
\begin{prop}\label{ortho_lem}
	Let $\beta$ be a co-action of a CQG $(\mathcal{A},\Delta)$ on $(V,E)$ which preserves its quantum symmetry in Banica's sense. Let  $i,j,k,l\in V$ be such that $E^i_j\neq\phi$ and $E^k_l\neq\phi$. Then the following are equivalent:
	\begin{enumerate}
		\item  $(Ad_{\beta})^{kl}_{ij}(D_{kl})\subseteq D_{ij}\otimes \mathcal{A}$ and $(Ad_{\overline{\beta}})^{kl}_{ij}(D_{kl})\subseteq D_{ij}\otimes \mathcal{A}$.
		\item For all $\sigma_1\neq\sigma_2\in E^i_j$ and $\tau\in E^k_l$,
		$$u^{\sigma_1}_{\tau}{u^{\sigma_2}_{\tau}}^*=0\quad\text{and}\quad {u^{\sigma_1}_{\tau}}^*u^{\sigma_2}_{\tau}=0 $$
		where $(u^{\sigma}_{\tau})_{\sigma,\tau\in E}$ is the co-representation matrix of $\beta$. 
	\end{enumerate}
	
\end{prop}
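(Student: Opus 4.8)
The plan is a straightforward unwinding of definitions, parallel to the proof of the proposition immediately preceding \ref{ortho_lem}. Since $D_{kl}=p_{kl}Dp_{kl}$ is linearly spanned by $\{p_\tau:\tau\in E^k_l\}$ and the maps $(Ad_\beta)^{kl}_{ij}$ and $(Ad_{\overline\beta})^{kl}_{ij}$ are linear, condition (1) is equivalent to the assertion that $(Ad_\beta)^{kl}_{ij}(p_\tau)\in D_{ij}\otimes\mathcal{A}$ and $(Ad_{\overline\beta})^{kl}_{ij}(p_\tau)\in D_{ij}\otimes\mathcal{A}$ for every $\tau\in E^k_l$. So I would reduce everything to computing these compressed operators on the rank-one projections $p_\tau$.

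First I would record, exactly as in the derivation of \ref{complete_orthogonality_charac} in the previous proof, that for all $\tau,\sigma_2\in E$,
\[
Ad_\beta(p_\tau)(\chi_{\sigma_2}\otimes 1)=\sum_{\sigma_1\in E}\chi_{\sigma_1}\otimes u^{\sigma_1}_\tau {u^{\sigma_2}_\tau}^*,
\]
where $(u^\sigma_\tau)_{\sigma,\tau\in E}$ is the co-representation matrix of $\beta$. Now compress by $p_{ij}\otimes 1$ on both sides: the inner $p_{ij}\otimes 1$ annihilates the input $\chi_{\sigma_2}\otimes 1$ unless $\sigma_2\in E^i_j$, and the outer $p_{ij}\otimes 1$ restricts the output sum to $\sigma_1\in E^i_j$. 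Hence for $\tau\in E^k_l$ one gets $(Ad_\beta)^{kl}_{ij}(p_\tau)(\chi_{\sigma_2}\otimes1)=0$ when $\sigma_2\notin E^i_j$, and
\[
(Ad_\beta)^{kl}_{ij}(p_\tau)(\chi_{\sigma_2}\otimes1)=\sum_{\sigma_1\in E^i_j}\chi_{\sigma_1}\otimes u^{\sigma_1}_\tau{u^{\sigma_2}_\tau}^*
\]
when $\sigma_2\in E^i_j$.

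Next I would use the identification of $M_{ij}\otimes\mathcal{A}$ with $|E^i_j|\times|E^i_j|$ matrices over $\mathcal{A}$ in the basis $\{\chi_\sigma:\sigma\in E^i_j\}$: an element $F$ lies in $D_{ij}\otimes\mathcal{A}$ exactly when all its off-diagonal entries vanish, i.e.\ when $F(\chi_{\sigma_2}\otimes1)\in\mathbb{C}\chi_{\sigma_2}\otimes\mathcal{A}$ for every $\sigma_2\in E^i_j$. Feeding the displayed formula into this criterion shows that $(Ad_\beta)^{kl}_{ij}(p_\tau)\in D_{ij}\otimes\mathcal{A}$ if and only if $u^{\sigma_1}_\tau{u^{\sigma_2}_\tau}^*=0$ for all distinct $\sigma_1,\sigma_2\in E^i_j$; letting $\tau$ range over $E^k_l$ then yields that $(Ad_\beta)^{kl}_{ij}(D_{kl})\subseteq D_{ij}\otimes\mathcal{A}$ iff $u^{\sigma_1}_\tau{u^{\sigma_2}_\tau}^*=0$ for all $\tau\in E^k_l$ and all $\sigma_1\neq\sigma_2$ in $E^i_j$. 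Running the identical argument with $\overline\beta$, whose co-representation matrix is $({u^\sigma_\tau}^*)_{\sigma,\tau\in E}$, replaces $u^{\sigma_1}_\tau{u^{\sigma_2}_\tau}^*$ by ${u^{\sigma_1}_\tau}^*u^{\sigma_2}_\tau$ throughout, giving $(Ad_{\overline\beta})^{kl}_{ij}(D_{kl})\subseteq D_{ij}\otimes\mathcal{A}$ iff ${u^{\sigma_1}_\tau}^*u^{\sigma_2}_\tau=0$ for all $\tau\in E^k_l$ and $\sigma_1\neq\sigma_2$ in $E^i_j$. Conjoining the two equivalences is precisely $(1)\iff(2)$.

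I do not expect a real obstacle here; the content is a careful bookkeeping exercise. The only two points that need attention are: (a) tracking the two compressions correctly, so that the inner $p_{ij}\otimes 1$ kills inputs $\chi_{\sigma_2}$ with $\sigma_2\notin E^i_j$ while the outer one confines the output index $\sigma_1$ to $E^i_j$; and (b) observing that the passage $\beta\rightsquigarrow\overline\beta$ turns the relation $u^{\sigma_1}_\tau{u^{\sigma_2}_\tau}^*=0$ into ${u^{\sigma_1}_\tau}^*u^{\sigma_2}_\tau=0$, which is why both orthogonality relations appear in (2).
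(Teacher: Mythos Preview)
Your proposal is correct and follows essentially the same route as the paper: compute $(Ad_\beta)^{kl}_{ij}(p_\tau)(\chi_{\sigma_2}\otimes 1)=\sum_{\sigma_1\in E^i_j}\chi_{\sigma_1}\otimes u^{\sigma_1}_\tau{u^{\sigma_2}_\tau}^*$ (and the analogous formula with $\overline\beta$), then invoke the characterisation that an element of $M_{ij}\otimes\mathcal{A}$ lies in $D_{ij}\otimes\mathcal{A}$ exactly when each $\chi_{\sigma_2}\otimes 1$ is an eigenvector. The paper's write-up is slightly terser but the computation and the diagonal criterion are identical to yours.
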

\begin{proof}
	We observe that,  for any  $T\in M_{ij}$, $T\in D_{ij}$ if and only if $\{\chi_{\sigma}|\sigma\in E^i_j\}$ is a set of eigenvectors for $T$.\par 
	
	Let us fix $\tau\in E^k_l$ and $\sigma_2\in E^i_j$. We observe the following identities:
	\begin{align*}
		(Ad_{\beta})^{kl}_{ij}(p_{\tau})(\chi_{\sigma_2}\otimes 1)&=\sum_{\sigma_1\in E^i_j}\chi_{\sigma_1}\otimes u^{\sigma_1}_{\tau}{u^{\sigma_2}_{\tau}}^*;\\
		(Ad_{\overline{\beta}})^{kl}_{ij}(p_\tau)(\chi_{\sigma_2}\otimes 1)&=\sum_{\sigma_1\in E^i_j}\chi_{\sigma_1}\otimes {u^{\sigma_1}_{\tau}}^*u^{\sigma_2}_{\tau}.
	\end{align*}
	From the observation mentioned in the beginning of the proof, the equivalence follows. 
\end{proof}
In the next proposition we see that our condition of ``restricted orthogonality" can not be relaxed any further.
\begin{prop}
Let $\beta$ be a co-action of a CQG $(\mathcal{A},\Delta)$ on $(V,E)$ preserving quantum symmetry of $(V,E)$ in Banica's sense. If $\beta$ satisfies ``complete orthogonality" that is, either
\begin{equation*}
   Ad_{\beta}(D)\subseteq D\otimes\mathcal{A} \quad\text{or}\quad Ad_{\overline{\beta}}(D)\subseteq D\otimes  \mathcal{A},
\end{equation*}
then $\beta$ is a quantum permutation on the edge set $E$, that is, $\beta$ preserves quantum symmetry of $(V,E)$ in Bichon's sense.
\end{prop}
\begin{proof}
We observe that, For $T\in B(L^2(E))$, $T\in D$ if and only if $\{\chi_{\tau}|\tau\in E\}$ is the complete set of eigenvectors of $T$.\\
Let $(u^{\sigma}_{\tau})_{\sigma,\tau\in E}$ be the co-representation matrix of $\beta$.
For $\tau,\sigma_2\in E$, we note that,
\begin{align*}
Ad_{\beta}(p_\tau)(\chi_{\sigma_2}\otimes 1)=\beta(p_{\tau}\otimes 1)(\sum_{\tau'\in E}\chi_{\tau'}\otimes u^{\sigma_2*}_{\tau'})
=\sum_{\sigma_1\in E}\chi_{\sigma_1}\otimes u^{\sigma_1}_{\tau}{u^{\sigma_2*}_{\tau}}.
\end{align*}
Using observation mentioned in the beginning we conclude that,
\begin{align}\label{complete_orthogonality_charac}
Ad_{\beta}(D)\subseteq D\otimes \mathcal{A}\iff u^{\sigma_1}_{\tau}u^{\sigma_2*}_{\tau}=0 \quad\text{when}\quad\sigma_1\neq\sigma_2.
\end{align}
Using observation \ref{complete_orthogonality_charac} and the fact that each row and each column of the matrix $(u^{\sigma}_{\tau})_{\sigma,\tau\in E}$ adds up to $1$, it follows that,
\begin{align*}
u^{\sigma_1}_{\tau}=u^{\sigma_1}_{\tau}(\sum_{\sigma_2\in E}u^{\sigma_2*}_{\tau})=u^{\sigma_1}_{\tau}u^{\sigma_1*}_{\tau}\quad\text{where}\quad \sigma_1,\tau\in E.
\end{align*}
Using spectral calculus for normal operators, it follows that $u^{\sigma_1}_{\tau}$ is a projection making $(u^{\sigma}_{\tau})_{\sigma,\tau\in E}$ a quantum permutation matrix. Therefore $\beta$ preserves quantum symmetry of $(V,E)$ in Bichon's sense. The case when we consider $\overline{\beta}$ instead of $\beta$ can be dealt using similar arguments.
\end{proof}
\subsection*{\textbf{The category $\mathcal{C}^{sym}_{(V,E)}$:}}
The category $\mathcal{C}^{sym}_{(V,E)}$ is a full subcategory of $\mathcal{C}^{Ban}_{(V,E)}$ whose objects are CQGs co-acting on $(V,E)$ preserving its quantum symmetry in our sense, that is, they satisfy ``restricted orthogonality" and morphisms are quantum group homomorphisms intertwining similar type co-actions. As this new restriction does not behave well with the co-product of the ambient quantum group $Q^{Ban}_{(V,E)}$, it is still not clear whether for an arbitrary multigraph, the category $\mathcal{C}^{sym}_{(V,E)}$ admits a universal object. However, under certain restrictions on the multigraph $(V,E)$, the mentioned category does admit a universal object which is a corollary to the theorem \ref{beta_alpha2_co-action_corres} in the next section. Moreover, theorem \ref{beta_alpha2_co-action_corres} also asserts that for any multigraph $(V,E)$, the universal commutative CQG in the category $\mathcal{C}^{sym}_{(V,E)}$ is nothing but $C(G^{aut}_{(V,E)})$ where $G^{aut}_{(V,E)}$ is the group of classical automorphisms of $(V,E)$.
\subsection{\textbf{Consequences of restricted orthogonality:}}
\begin{thm}\label{beta_alpha2_co-action_corres}
Let $\beta$ be a co-action of the CQG $(\mathcal{A},\Delta)$ on $(V,E)$ preserving its quantum symmetry in our sense and $\alpha$ be the induced permutation on the set of vertices $V$. The map $\beta$ is quantum permutation of the the edge set $E$ if and only if $\alpha^{(2)}$ is a quantum permutation of the set $\overline{E}$ where $(V,\overline{E},w)$ is the underlying weighted single edged graph of $(V,E)$ (see definition \ref{under_weighted_simp}). 
\end{thm}
\begin{proof}
Throughout the proof, $(u^{\sigma}_{\tau})_{\sigma,\tau\in E}$ will be the co-representation matrix of $\beta$ and $(q^i_j)_{i,j\in V}$ will be the co-representation matrix of the induced co-action $\alpha$ on $C(V)$. From theorem \ref{wt_preserve} and proposition \ref{banica_iden2}, it further follows that $$\alpha^{(2)}(L^2(\overline{E}))\subseteq L^2(\overline{E})\otimes\mathcal{A}.$$ 

The co-representation matrix of the restricted action $\alpha^{(2)}|_{L^2(\overline{E})}$ is given by $$(q^i_kq^j_l)_{(i,j),(k,l)\in \overline{E}}.$$
If $\alpha$ is a quantum permutation on the edge set $E$, using (3) of proposition \ref{banica_iden1}, it follows that the matrix $(q^i_kq^j_l)_{(i,j),(k,l)\in \overline{E}}$ is a quantum permutation matrix making $\alpha^{(2)}$ a quantum permutation of $\overline{E}$.\par
Conversely, let us assume $\alpha^{(2)}$ is a co-action on the algebra $C(\overline{E})$. For $\tau\in E^i_j$ and $\tau'\in E^k_l$, using proposition \ref{banica_iden1} and proposition \ref{ortho_lem} we observe the following relations:
	\begin{align*}
		\sum_{\sigma\in E^i_j}{u^{\sigma}_{\tau'}}^*u^{\sigma}_{\tau'}&=(\sum_{\sigma\in E^i_j}u^{\sigma}_{\tau'})^*(\sum_{\sigma\in E^i_j}u^{\sigma}_{\tau'})\notag
		=q^j_lq^i_kq^j_l=q^j_lq^i_k;\\
		\sum_{\sigma\in E^i_j}u^{\sigma}_{\tau'}{u^{\sigma}_{\tau'}}^*&=(\sum_{\sigma\in E^i_j}u^{\sigma}_{\tau'})(\sum_{\sigma\in E^i_j}u^{\sigma}_{\tau'})^*\notag
		=q^i_kq^j_lq^i_k=q^j_lq^i_k.
	\end{align*}
	Using above identities it follows that,
	\begin{align*}
		u^{\tau}_{\tau'}{u^{\tau}_{\tau'}}^*u^{\tau}_{\tau'}&=u^{\tau}_{\tau'}(\sum_{\sigma\in E^i_j}{u^{\sigma}_{\tau'}}^*u^{\sigma}_{\tau'})
		=u^{\tau}_{\tau'}(q^j_lq^i_k)
		=u^{\tau}_{\tau'}(\sum_{\sigma\in E^i_j}{u^{\sigma}_{\tau'}}^*)=u^{\tau}_{\tau'}{u^{\tau}_{\tau'}}^*,\\
		\quad u^{\tau}_{\tau'}{u^{\tau}_{\tau'}}^*u^{\tau}_{\tau'}&=(\sum_{\sigma\in E^i_j}u^{\sigma}_{\tau'}u^{\sigma*}_{\tau'})u^{\tau}_{\tau'}=(q^j_l q^i_k)u^{\tau}_{\tau'}=(\sum_{\sigma\in E^i_j}u^{\sigma*}_{\tau'})u^{\tau}_{\tau'}=u^{\tau*}_{\tau'}u^{\tau}_{\tau'}.
	\end{align*}
	Therefore we have, 
	$$u^{\tau}_{\tau'}{u^{\tau}_{\tau'}}^*={u^{\tau}_{\tau'}}^*u^{\tau}_{\tau'}=u^{\tau}_{\tau'}{u^{\tau}_{\tau'}}^*u^{\tau}_{\tau'}.$$
	
	Using spectral calculus for normal operators, we conclude that $u^{\tau}_{\tau'}$ is a projection. 
    As $\tau$ and $\tau'$ were arbitrary, coefficients of the matrix $(u^{\tau}_{\tau'})_{\tau,\tau'\in E}$ are projections. From (4) of definition \ref{maindef} it further follows that coefficients of each row and each column add up to $1$ making $(u^{\tau}_{\tau'})_{\tau,\tau'\in E}$ a quantum permutation matrix and $\alpha$ a quantum permutation of the edge set $E$.  
\end{proof}
We have the following corollary of the above theorem which asserts universal object in $\mathcal{C}^{sym}_{(V,E)}$ for a certain class of multigraphs.
\begin{cor}\label{wr_pdt_proj} For a multigraph $(V,E)$, the two categories $\mathcal{C}^{sym}_{(V,E)}$ and $\mathcal{C}^{Bic}_{(V,E)}$ coincide if and only if the categories $\mathcal{D}^{Ban}_{(V,\overline{E},w)}$ and $\mathcal{D}^{Bic}_{(V,\overline{E},w)}$ coincide where $(V,\overline{E},w)$ is the underlying weighted single edged graph of $(V,E)$ (see definition \ref{under_weighted_simp}). For this class of multigraphs, the universal object in $\mathcal{C}^{Bic}_{(V,E)}$, $Q^{Bic}_{(V,E)}$ is also universal in $\mathcal{C}^{sym}_{(V,E)}$.
\end{cor}
\begin{proof} 
 Using proposition \ref{banica_iden2}, we observe that, for any co-action on a multigraph $(V,E)$, the induced permutation on the vertex set always preserves the weighted symmetry of the underlying weighted graph $(V,\overline{E},w)$. In other words, the induced permutation on the vertex set is a member of $\mathcal{D}^{Ban}_{(V,\overline{E},w)}$. The proof is straightforward using this observation and theorem \ref{beta_alpha2_co-action_corres}.  
  \end{proof}

 Before proceeding further, we describe the notion of uniform components of a multigraph. For a positive integer $m$, a \textbf{uniform multigraph of degree $m$}  is a multigraph where $|E^i_j|=0$ or $m$ for all $i,j\in V$. For a ``non-uniform" multigraph $(V,E)$ and an integer $m$, a \textbf{unifrom component of degree $m$} is a multi-subgraph $(V_m,E_m)$ of $(V,E)$ where  $E_m\subseteq E$ and $V_m\subseteq V$ are given by,
\begin{align*}
E_m&=\{\tau\in E\:|\:\text{cardinality of the set}\:E^{s(\tau)}_{t(\tau)}=m\}\\
V_m&=\{v\in V|v=s(\tau)\:\:\text{or}\:\:v=t(\tau)\:\:\text{for some}\:\:\tau\in E_m\}.
\end{align*}
	It is evident that, $E=\sqcup_{m}E_m$ and  $V=\cup_{m}V_m$. We will write $$(V,E)=\cup_{m}(V_m,E_m).$$ By $V^s_m$ and $V^t_m$, we will mean the sets of initial and final vertices of the multi-subgraph $(V_m,E_m)$.  A multigraph having only one uniform component of degree $m$, is said to be a \textbf{uniform multigraph of degree $m$}.\par 
In the next proposition, we will see that our co-actions preserve uniform components of a multigraph.
\begin{prop}\label{direct_sum}
	Let $\beta:L^2(E)\rightarrow L^2(E)\otimes \mathcal{A}$ be a co-action of a CQG $(\mathcal{A},\Delta)$ on  $(V,E)$ preserving its quantum symmetry in our sense. For any $m\in \mathbb{N}$ with $E_m\neq\phi$, it follows that, $$\beta(L^2(E_m))\subseteq L^2(E_m)\otimes \mathcal{A}.$$
	Conversely, let $\{\beta_m|m\in\mathbb{N}, E_m\neq\phi\}$ be a family of co-actions preserving quantum symmetries (in our sense) of the uniform components $(V_m,E_m)$'s such that induced permutations on the set of vertices agree on the common regions, that is, for all $m\neq n$, $$\alpha_m|_{C(V_m\cap V_n)}=\alpha_{n}|_{C(V_m\cap V_n)}.$$ Then $\beta=\bigoplus_m \beta_m$ is a co-action on $(V,E)$ preserving its quantum symmetry (in our sense). 
\end{prop}
\begin{proof}
	Let $(u^{\sigma}_{\tau})_{\sigma,\tau\in E}$ be the co-representation matrix of $\beta$.
	Let $i,j,k,l\in V$ be such that $E^i_j$ and $E^k_l$ are nonempty and $|E^i_j|\neq|E^k_l|$. It is enough to show that
	\begin{equation}
	u^{\sigma}_{\tau}=0\quad\text{for all}\quad\sigma\in E^i_j,\tau\in E^k_l.
	\end{equation} 
	Using propositions \ref{banica_iden1}, \ref{banica_iden2} and \ref{ortho_lem} we observe that,
	\begin{align*}
	\|u^{\sigma}_{\tau}\|^2=\|{u^{\sigma}_{\tau}}^*u^{\sigma}_{\tau}\|
	=\|{u^{\sigma}_{\tau}}^*(\sum_{\sigma_1\in E^i_j}u^{\sigma_1}_{\tau})\|
	=\|{u^{\sigma}_{\tau}}^*q^i_kq^j_l \|=0.                
	\end{align*}
	Conversely, the proof is done by using corollary \ref{bimod_prop_equiv_cor} repeatedly. 
	As $\alpha_m|_{C(V_m\cap V_n)}=\alpha_n|_{C(V_m\cap V_n)}$ for all $m\neq n\in \mathbb{N}$, there exists a co-action $\alpha:C(V)\rightarrow C(V)\otimes\mathcal{A}$ such that,
	\begin{equation*}
		\alpha (\chi_i)=\alpha_m(\chi_i)\quad\text{where}\quad i\in V_m.
	\end{equation*}
	Let $(q^i_k)_{i,k\in V}$ be the co-representation matrix of $\alpha$. As we clearly have  $\alpha(C(V_m\cap V_n))\subseteq C(V_m\cap V_n)\otimes \mathcal{A}$ for all $m\neq n$, it follows that, 
	\begin{align}\label{V_m_preserve}
		q^k_i&=0\quad\text{where}\quad  i\notin\ V_m\cap V_n\:\:\text{and}\:\: k\in V_m\cap V_n.
	\end{align}
 Let $\tau\in E_m$ and $i\in V_n$ for some nonzero integers $m$ and $n$ such that $m\neq n$. We observe that if $i\in V_m\cap V_n$ then 
	\begin{equation*}
		\alpha(\chi_i)\beta(\chi_\tau)=\alpha_m(\chi_i)\beta_m(\chi_\tau)=\beta_m(\chi_i.\chi_\tau)=\beta(\chi_i.\chi_\tau)
	\end{equation*}
	and if $i\in V_n\setminus V_m$, then using equation \ref{V_m_preserve} we observe that,
	\begin{align*}
		\alpha(\chi_i)\beta(\chi_\tau)&=\alpha_n(\chi_i)\beta_m(\chi_\tau)\\
		&=(\sum_{\substack{k\in V_n\\k\notin V_m\cap V_n}}\chi_{k}\otimes q^k_i)(\sum_{\sigma\in E_m}\chi_{\sigma}\otimes u^{\sigma}_{\tau})\\
		&=0\quad (\text{as $\chi_k.\chi_\sigma=0$ for all $k\in V_n\setminus V_m$})\\
		&=\beta(\chi_{i}.\chi_{\tau}).
	\end{align*}
	Using similar arguments, it also follows that,
	\begin{equation*}
		\beta(\chi_{\tau})\alpha(\chi_i)=\beta(\chi_{\tau}.\chi_i)\quad\text{for all}\quad i\in V,\tau\in E. 
	\end{equation*}
 We further observe that, 
 \begin{equation*}
    \beta(\sum_{\tau\in E}\chi_{\tau})=\sum_{\substack{m\in\mathbb{N}\\E_m\neq\phi}}\beta_{m}(\sum_{\tau\in E_m}\chi_{\tau})=\sum_{\substack{m\in\mathbb{N}\\E_m\neq\phi}}\sum_{\tau\in E_m}\chi_{\tau}\otimes 1=\sum_{\tau\in E}\chi_{\tau}\otimes 1.
    \end{equation*}
 Using corollary \ref{bimod_prop_equiv_cor} it follows that $\beta$ is co-action on $(V,E)$ preserving its quantum symmetry in Banica's sense. To show that $\beta$ satisfies ``restricted orthogonality", we first observe that co-representation matrix of $\beta$ is direct sum of co-representation matrices of $\beta_m$'s. As algebraic relations related to ``restricted orthogonality" (see proposition \ref{ortho_lem}) are satisfied by matrix coefficients of $\beta_m$'s it is easy to see that, same holds for the matrix coefficients of $\beta$ making it a quantum symmetry preserving co-action on $(V,E)$ in our sense.
\end{proof}
In light of above discussion, it is worthwhile to look more into co-actions on uniform multigraphs.

\subsubsection*{\textbf{Co-actions on uniform multigraphs:}}
We start with the description of \textbf{edge-labeling} of a multigraph.

\begin{defn}\label{uni_mult}
	Let $(V,E)$ be a multigraph. For each $k,l\in V$ such that $E^k_l\neq\phi$, let us consider a bijection $\mu_{kl}:\{1,..,m\}\rightarrow E^k_l$ where $|E^k_l|=m$. This set of bijections $\{\mu_{kl}|E^k_l\neq\phi\}$ is said to be an \textbf{edge-labeling of the multigraph $(V,E)$}. Once an edge-labeling is fixed, any $\tau\in E$ can be written as
	\begin{equation*}
		\tau=(k,l)r \quad\text{where}\quad s(\tau)=k,\: t(\tau)=l\:\:\text{and}\:\:1\leq r\leq |E^k_l|.
	\end{equation*}
\end{defn}
\begin{rem}
This method of labeling the edges in a multigraph has been described as a \textbf{representation of a multigraph} in \cite{asfaq2023thesis}. Despite the difference in terminology here and in \cite{asfaq2023thesis}, they necessarily mean the same thing. 
\end{rem}
For proceeding further we will be needing the following technical lemma:

\begin{lem}\label{tech_lem2}
	Let $\{A_i|i=1,2,..,n\}$ be a set of positive operators on a Hilbert space $H$ such that $A_iA_j=0$ when $i\neq j$.  We define $T=\sum_{i=1}^nA_i$. For $i\in\{1,2,..,n\}$, let $p_i$ and $P_T$ be range projections of $A_i$ and $T$, that is, orthogonal projections onto the closures of ranges of $A_i$ and $T$ respectively. Then the following identities are true:
	\begin{enumerate}
		\item $p_ip_j=0\quad \text{when}\quad i\neq j$.
		\item $A_i=p_iT=Tp_i\quad\text{for all}\quad i=1,2,..,n.$
		\item $\sum_{i=1}^np_i=P_T$.
	\end{enumerate}
	
\end{lem}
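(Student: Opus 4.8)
The plan is to prove the three identities in sequence, since each builds on the previous one, using only elementary Hilbert space operator theory (positivity, range projections, and the $C^*$-identity). Throughout I write $p_i$ for the range projection of $A_i$ and $P_T$ for the range projection of $T=\sum_i A_i$, and I use freely that for a positive operator $B$ the range projection $p_B$ satisfies $p_B B = B p_B = B$ and that $p_B$ is the strong limit of $B(B+\varepsilon)^{-1}$, hence lies in the von Neumann algebra generated by $B$; in particular any operator commuting with $B$ commutes with $p_B$.

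For (1): from $A_iA_j=0$ with $i\neq j$ and $A_i,A_j\geq 0$, I first deduce $A_jA_i = (A_iA_j)^*=0$ as well. Since $p_i$ is a strong limit of polynomials in $A_i$ (more precisely of $A_i(A_i+\varepsilon)^{-1}$), and $A_j$ commutes with $A_i$ (both products vanish, so they certainly commute), $A_j$ commutes with $p_i$, and $A_j p_i = \lim_\varepsilon A_j A_i (A_i+\varepsilon)^{-1} = 0$. Then $p_j p_i$ is a strong limit of $A_j(A_j+\varepsilon)^{-1} p_i = (A_j+\varepsilon)^{-1} A_j p_i = 0$, giving $p_ip_j=0$. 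The key point is that orthogonality of the operators forces orthogonality of their range projections, which I get by pushing the relation $A_iA_j=0$ through the functional calculus.

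For (2): since $p_ip_j=0$ for $i\neq j$, the $p_i$ are mutually orthogonal projections, so $\sum_i p_i =: P$ is a projection with $P\geq p_i$ for each $i$. Then $A_i = p_i A_i = p_i A_i p_i$, and since $p_iA_j = A_j p_i = 0$ for $j\neq i$ (shown above), $p_i T = p_i\sum_j A_j = p_i A_i = A_i$, and similarly $T p_i = A_i$. For (3): from (2), $PT = \sum_i p_i T = \sum_i A_i = T$ and likewise $TP = T$, so $\operatorname{ran}(T)\subseteq \operatorname{ran}(P)$, giving $P_T \leq P$. Conversely each $A_i = p_i T$ shows $\operatorname{ran}(A_i)\subseteq \operatorname{ran}(T)$ after noting $\operatorname{ran}(A_i)=\operatorname{ran}(p_i T)\subseteq \operatorname{ran}(T)$ is not quite immediate — instead I argue $p_i \leq P_T$ because $p_i = p_i P_T + p_i(1-P_T)$ and $p_i(1-P_T) T^{1/2}$-type estimate: more cleanly, $A_i = Tp_i$ implies $A_i = P_T A_i$ (as $A_i$ has range in $\operatorname{ran}(T)=\operatorname{ran}(P_T)$ via $A_i = P_T T p_i$), hence $p_i = $ range projection of $A_i \leq P_T$; summing over $i$ gives $P = \sum_i p_i \leq P_T$. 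Combined with $P_T\leq P$ this yields $P_T = P = \sum_i p_i$.

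The main obstacle — really the only subtle point — is getting the first implication $A_iA_j=0 \Rightarrow p_ip_j=0$ rigorously, since it requires knowing that the range projection of a positive operator is approximable strongly by functions of that operator and therefore that annihilation relations pass to the projections. Everything after that is bookkeeping with mutually orthogonal projections. One must be a little careful in (3) to verify both inclusions $\operatorname{ran}(T)\subseteq\operatorname{ran}(P)$ and $\operatorname{ran}(A_i)\subseteq\operatorname{ran}(T)$ cleanly; the identity $A_i = Tp_i = p_i T$ from (2) is exactly what makes both directions fall out, so the order (1) $\to$ (2) $\to$ (3) is essential.
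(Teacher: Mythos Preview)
Your proof is correct, but your argument for (1) differs from the paper's. The paper proves $p_ip_j=0$ by a direct inner-product computation: for $\xi,\eta\in H$,
\[
\langle A_i\xi, A_j\eta\rangle = \langle A_jA_i\xi,\eta\rangle = 0,
\]
so the ranges of $A_i$ and $A_j$ (hence their closures) are orthogonal, giving $p_ip_j=0$ immediately. You instead invoke functional calculus, using that $p_i$ is the strong limit of $A_i(A_i+\varepsilon)^{-1}$ and pushing the annihilation $A_iA_j=0$ through the resolvent. Both are valid; the paper's route is more elementary and avoids any appeal to von Neumann algebra machinery, while yours makes explicit the general principle that annihilation relations among positive operators pass to their range projections via Borel functional calculus.

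For (2) your argument is essentially identical to the paper's. For (3) the paper shows the equality $\overline{\operatorname{Range}(T)}=\bigoplus_i \overline{\operatorname{Range}(A_i)}$ directly at the level of vectors (using $A_i\xi = T(p_i\xi)$ for one inclusion and $T\xi=\sum_i A_i\xi$ with orthogonal summands for the other), whereas you argue via the projection inequalities $P_T\le P$ and $p_i\le P_T$. Your detour through the ``not quite immediate'' remark is unnecessary: once you have $A_i=Tp_i$, the inclusion $\operatorname{Range}(A_i)\subseteq\operatorname{Range}(T)$ is immediate, hence $p_i\le P_T$; the paper's version of this step is the observation $A_i\xi=T(p_i\xi)$, which is exactly the same fact read elementwise.
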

\begin{proof}
	To prove (1), we observe that, for $\xi,\eta\in H$,
	\begin{equation*}
		<A_i(\xi),A_j(\eta)>=<A_jA_i(\xi),\eta>=0\quad\text{for}\quad i\neq j.
	\end{equation*}
	Therefore range of $p_i$ is orthogonal to range of $p_j$ whenever $i\neq j$ and (1) follows.
	\par
	We further observe that,
	\begin{align*}
		p_iA_j=p_ip_jA_j&=0\quad 
		\text{and}\quad A_jp_i={(p_iA_j)}^*=0\quad \text{where}\quad i\neq j\\
	\text{and therefore}\quad	p_iT&=p_i(\sum_{j=1}^nA_j)=p_iA_i=A_i,\\
		Tp_i&=(\sum_{j=1}^nA_j)p_i=A_ip_i={(p_iA_i)}^*=A_i.
	\end{align*}
	Hence (2) is proved.
	\par 
	To prove claim (3), it is enough to observe that
	\begin{equation*}
		\overline{Range(T)}=\oplus_{i=1}^n\overline{Range(A_i)}
	\end{equation*}
	where the direct sum is an orthogonal direct sum.
\end{proof}

\begin{nota}
	For a C* algebra $\mathcal{A}$, let us denote its universal enveloping Von-Neumann algebra by $\overline{\mathcal{A}}$.
	\par 
	For the rest of this subsection,  we consider $(V,E)$ to be a uniform multigraph of degree $m$ with its edges labeled (see definition \ref{uni_mult}). There is a co-action $\beta$ of a CQG $(\mathcal{A},\Delta)$ on $(V,E)$ preserving its quantum symmetry in our sense. The matrices $(u^{(i,j)r}_{(k,l)s})_{(i,j)r,(k,l)s\in E}$ and $(q^i_j)_{i,j\in V}$ will be the co-representation matrices of $\beta$ and  its induced permutation on the vertex set $V$.
\end{nota}
\begin{prop}\label{struct_thm1}
	Let $i,j,k,l$ be in $V$ such that $E^i_j$ and $E^k_l$ are nonempty. Then there exists a projection valued matrix $(p^{(i,j)r}_{(k,l)s})_{r,s=1,..,m}\in M_m(\mathbb{C})\otimes\overline{\mathcal{A}}$ such that the following holds:
	\begin{equation*}
		u^{(i,j)r}_{(k,l)s}{u^{(i,j)r}_{(k,l)s}}^*=p^{(i,j)r}_{(k,l)s}q^i_kq^j_lq^i_k.
	\end{equation*}
	Here $p^{(i,j)r}_{(k,l)s}$'s are the range projections of $u^{(i,j)r}_{(k,l)s}$ satisfying the following ``quantum permutation like relations":
	\begin{enumerate}
		\item For $r, r'$ and $s\in\{1,2,..,m\},\:\:\:p^{(i,j)r}_{(k,l)s}p^{(i,j)r'}_{(k,l)s}=\delta_{r,r'}p^{(i,j)r}_{(k,l)s}$.
		\item For $r,s$ and $s'\in\{1,2,..,m\},\:\:\:p^{(i,j)r}_{(k,l)s}p^{(i,j)r}_{(k,l)s'}=\delta_{s,s'}p^{(i,j)r}_{(k,l)s}$.
		\item $\sum_{s=1}^mp^{(i,j)r}_{(k,l)s}=\sum_{r=1}^{m}p^{(i,j)r}_{(k,l)s}=P_{q^i_kq^j_lq^i_k}$ where $P_{q^i_kq^j_lq^i_k}$ is the range projection of $q^i_kq^j_lq^i_k$. 
	\end{enumerate} 
	
\end{prop}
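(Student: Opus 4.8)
The idea is to analyze the "restricted orthogonality" relations provided by condition (5) of Definition \ref{maindef} together with the bimodularity identities \ref{bimod_coeff} and the Banica-type identities of Proposition \ref{banica_iden1}, and extract from them a ``doubly indexed quantum permutation'' structure on the range projections. Fix $i,j,k,l\in V$ with $E^i_j,E^k_l\neq\phi$, and write $u_{rs}:=u^{(i,j)r}_{(k,l)s}$ for brevity. First I would record what the hypotheses give for these particular entries: from \ref{bimod_coeff} we have $q^i_k u_{rs}=u_{rs}$ and $u_{rs}q^j_l=u_{rs}$ (since $s((k,l)s)=k$, $t((k,l)s)=l$, $s((i,j)r)=i$, $t((i,j)r)=j$), so each $u_{rs}$ lies in the corner $q^i_k(\cdot)q^j_l$; from Proposition \ref{ortho_lem} (applicable because $\beta$ preserves quantum symmetry in our sense, hence condition (5)) we have $u_{rs}u_{r's}^*=0$ and $u_{rs}^*u_{rs'}=0$ whenever $r\neq r'$, resp. $s\neq s'$, because all of $(i,j)r,(i,j)r'$ share the endpoint pair $(i,j)$ and all of $(k,l)s,(k,l)s'$ share $(k,l)$; and from Proposition \ref{banica_iden1}(3) applied to $\sigma=(k,l)s\in E^k_l$ we have $\sum_{r=1}^m u_{rs}=q^i_kq^j_l$, while the ``upper index'' version (last sentence of Proposition \ref{banica_iden1}, via the antipode) gives $\sum_{s=1}^m u_{rs}=q^i_kq^j_l$ for $\sigma=(i,j)r$.

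**The projections.** Next I would pass to the enveloping von Neumann algebra $\overline{\mathcal{A}}$ and let $p_{rs}$ be the range projection of $u_{rs}$ (equivalently of the positive element $u_{rs}u_{rs}^*$). Fixing the index $s$, the family $\{u_{rs}u_{rs}^*\}_{r=1}^m$ consists of positive operators with $u_{rs}u_{rs}^*\,u_{r's}u_{r's}^*=0$ for $r\neq r'$ (from $u_{r's}^*u_{rs}=0$, which follows from $u_{rs}^*u_{r's}=0$ by taking adjoints — careful: Proposition \ref{ortho_lem}(2) gives $u^{\sigma_1}_\tau u^{\sigma_2*}_\tau=0$ for $\sigma_1\neq\sigma_2$ with the same endpoint pair and $\tau$ fixed, which is exactly $u_{rs}u_{r's}^*=0$, and also $u_{rs}^*u_{r's}=0$; multiplying $u_{rs}u_{rs}^*$ and $u_{r's}u_{r's}^*$ one inserts $u_{rs}^* u_{r's}=0$). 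So Lemma \ref{tech_lem2} applies with $A_r=u_{rs}u_{rs}^*$ and $T=\sum_r u_{rs}u_{rs}^* = (\sum_r u_{rs})(\sum_{r}u_{rs})^*$ after using the orthogonality again, $=q^i_kq^j_l(q^i_kq^j_l)^*=q^i_kq^j_lq^i_k$; this yields (1) $p_{rs}p_{r's}=\delta_{r,r'}p_{rs}$, then $u_{rs}u_{rs}^*=A_r=p_{rs}T=p_{rs}q^i_kq^j_lq^i_k$, and $\sum_r p_{rs}=P_{q^i_kq^j_lq^i_k}$, half of (3). Symmetrically, fixing $r$ and applying Lemma \ref{tech_lem2} to $\{u_{rs}^*u_{rs}\}_{s=1}^m$ (positive, pairwise-orthogonal products via $u_{rs}u_{rs'}^*$... here one needs $u_{rs}^*u_{rs'}=0$ for $s\neq s'$, which is Proposition \ref{ortho_lem}(2) in the other index, and the range projection of $u_{rs}^*u_{rs}$ equals the range projection of $u_{rs}^*$, which is the support projection of $u_{rs}u_{rs}^*$, i.e. also $p_{rs}$ since these are range projections of adjoint pieces — I'd spell out that $\mathrm{ran}(u^*u)^{-}=\mathrm{ran}(u^*)^{-}=\ker(u)^{\perp}$ while $p_{rs}=\mathrm{ran}(u)^{-}=\ker(u^*)^\perp$; so actually the ``column'' projections are a priori different from $p_{rs}$). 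This is the point needing care — see below. Modulo that, one gets (2) and the other half of (3).

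**The matching of row- and column-projections.** The main obstacle is reconciling the two applications of Lemma \ref{tech_lem2}: one produces projections $p_{rs}=\mathrm{ran}(u_{rs})^-$ summing to $P_{q^i_kq^j_lq^i_k}$ over $r$, the other produces $\tilde p_{rs}=\mathrm{ran}(u_{rs}^*)^- = \mathrm{ran}(u_{rs}^*u_{rs})^-$ summing over $s$ to $\mathrm{ran}(\sum_s u_{rs}^*u_{rs})^- = \mathrm{ran}((\sum_s u_{rs})^*(\sum_s u_{rs}))^- = \mathrm{ran}((q^i_kq^j_l)^* q^i_kq^j_l)^- = P_{q^i_kq^j_lq^i_k}$ as well (using $\sum_s u_{rs}=q^i_kq^j_l$ and the column orthogonality). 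To force $\tilde p_{rs}=p_{rs}$ I would argue that $u_{rs}$, restricted to the corner, is a partial isometry times a positive element with equal source and range projections: indeed $u_{rs}u_{rs}^*=p_{rs}q^i_kq^j_lq^i_k$ shows $u_{rs}u_{rs}^*$ and hence $u_{rs}$ live inside $p_{rs}\overline{\mathcal A}$, and symmetrically $u_{rs}^*u_{rs}=\tilde p_{rs}(\text{positive})$; but also $\|u_{rs}u_{rs}^*\|=\|u_{rs}^*u_{rs}\|$ and, more to the point, the two positive operators $u_{rs}u_{rs}^*=p_{rs}q^i_kq^j_lq^i_k$ and $u_{rs}^*u_{rs}=\tilde p_{rs}q^j_l q^i_k q^j_l$ (by the symmetric computation) are unitarily equivalent via the polar decomposition, so their range projections $p_{rs}$ and $\tilde p_{rs}$ are Murray–von Neumann equivalent; combined with $\sum_r p_{rs}=\sum_r\tilde p_{rs}'=\dots$ and the already-established mutual orthogonality within each row and column, a counting/orthogonality argument (in the spirit of the final step of the proof of Theorem \ref{source_targ_consis_thm}, where $r^k_i\le q^k_i$ plus both summing to $1$ forces equality) pins down $\tilde p_{rs}=p_{rs}$. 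Once that identification is in place, relations (1), (2), (3) all hold for the single family $\{p_{rs}\}$, and the displayed identity $u_{rs}u_{rs}^*=p_{rs}q^i_kq^j_lq^i_k$ is exactly what Lemma \ref{tech_lem2}(2) delivered. I'd then note that everything takes place in $M_m(\mathbb C)\otimes\overline{\mathcal A}$ as claimed, and that when $\beta$ is of Kac type one could even stay in $\mathcal A$, though the statement only needs $\overline{\mathcal A}$.
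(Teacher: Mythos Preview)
Your overall strategy is right and matches the paper's: collect the orthogonality from Proposition~\ref{ortho_lem}, the sum identities from Proposition~\ref{banica_iden1}, and feed them into Lemma~\ref{tech_lem2}. Your first application of Lemma~\ref{tech_lem2} (fixing $s$, varying $r$, with $A_r=u_{rs}u_{rs}^*$) is fine and already produces relation~(1), the displayed identity, and $\sum_r p_{rs}=P_{q^i_kq^j_lq^i_k}$.

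The gap is in your second application. You switch to the family $\{u_{rs}^*u_{rs}\}_s$, whose range projections are the \emph{support} projections of $u_{rs}$, not the range projections $p_{rs}$. These are genuinely different objects: the paper treats them separately as $\hat p^{(i,j)r}_{(k,l)s}$ in the very next Proposition~\ref{struct_thm1.1}, and there one finds $\sum_s\hat p_{rs}=P_{q^j_lq^i_kq^j_l}$, not $P_{q^i_kq^j_lq^i_k}$ (your computation $(q^i_kq^j_l)^*(q^i_kq^j_l)=q^i_kq^j_lq^i_k$ is off; it equals $q^j_lq^i_kq^j_l$). So the ``matching'' you attempt via Murray--von Neumann equivalence and counting cannot succeed: $p_{rs}$ and $\tilde p_{rs}=\hat p_{rs}$ are only equivalent, not equal, and neither the column-sums nor the row-sums agree.

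The fix is exactly what the paper does: apply Lemma~\ref{tech_lem2} to $\{u_{rs}u_{rs}^*\}_s$ for fixed $r$ as well. The range projections are then $p_{rs}$ \emph{by definition}, so no matching step is needed. The hypothesis $A_sA_{s'}=0$ requires $u_{rs}^*u_{rs'}=0$ for $s\neq s'$; this (and the cross-term vanishing $u_{rs}u_{rs'}^*=0$ needed for the factorization $\sum_s u_{rs}u_{rs}^*=(\sum_s u_{rs})(\sum_s u_{rs})^*=q^i_kq^j_lq^i_k$) follows from Proposition~\ref{ortho_lem} after applying the antipode and swapping the roles of $(i,j)$ and $(k,l)$, exactly the manoeuvre already invoked in Proposition~\ref{banica_iden1}. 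Lemma~\ref{tech_lem2} then gives relation~(2) and $\sum_s p_{rs}=P_{q^i_kq^j_lq^i_k}$, completing~(3).
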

\begin{proof}
	Using proposition (\ref{banica_iden1}) and proposition (\ref{ortho_lem}) we observe that,
	\begin{align*}
		\sum_{s=1}^mu^{(i,j)r}_{(k,l)s}{u^{(i,j)r}_{(k,l)s}}^*&=(\sum_{s=1}^mu^{(i,j)r}_{(k,l)s})(\sum_{s=1}^mu^{(i,j)r}_{(k,l)s})^*\\
		&=q^i_kq^j_l{(q^i_kq^j_l)}^*=q^i_kq^j_lq^i_k.
	\end{align*}
	As 	$u^{(i,j)r}_{(k,l)s}{u^{(i,j)r}_{(k,l)s}}^*$'s  are positive operators, using (2) of lemma \ref{tech_lem2} we conclude that
	\begin{equation*}
		u^{(i,j)r}_{(k,l)s}{u^{(i,j)r}_{(k,l)s}}^*=p^{(i,j)r}_{(k,l)s}q^i_kq^j_lq^i_k
	\end{equation*}
	where $p^{(i,j)r}_{(k,l)s}$ is range projection of $u^{(i,j)r}_{(k,l)s}{u^{(i,j)r}_{(k,l)s}}^*$ which is same as the range projection of $u^{(i,j)r}_{(k,l)s}$. The quantum permutation like relations among $p^{(i,j)r}_{(k,l)s}$'s follow from the ``orthogonality relations" mentioned in (1) and (3) of lemma \ref{tech_lem2}.
	
\end{proof}
\begin{prop}\label{struct_thm1.1}
	Let $i,j,k,l$ be in $V$ such that $E^i_j$ and $E^k_l$ are nonempty. Then there exists a projection valued valued matrix $(\hat{p}^{(i,j)r}_{(k,l)s})_{r,s=1,..,m}\in M_m(\mathbb{C})\otimes\overline{\mathcal{A}}$ such that the following holds:
	\begin{equation*}
		u^{(i,j)r*}_{(k,l)s}u^{(i,j)r}_{(k,l)s}=\hat{p}^{(i,j)r}_{(k,l)s}q^j_lq^i_kq^j_l.
	\end{equation*}
	Here $\hat{p}^{(i,j)r}_{(k,l)s}$'s are the range projections of $u^{(i,j)r*}_{(k,l)s}$ satisfying the following \textbf{quantum permutation} like relations:
	\begin{enumerate}
		\item For $r, r'$ and $s\in\{1,2,..,m\},\:\:\:\hat{p}^{(i,j)r}_{(k,l)s}\hat{p}^{(i,j)r'}_{(k,l)s}=\delta_{r,r'}\hat{p}^{(i,j)r}_{(k,l)s}$.
		\item For $r,s$ and $s'\in\{1,2,..,m\},\:\:\:\hat{p}^{(i,j)r}_{(k,l)s}\hat{p}^{(i,j)r}_{(k,l)s'}=\delta_{s,s'}\hat{p}^{(i,j)r}_{(k,l)s}$.
		\item $\sum_{s=1}^m\hat{p}^{(i,j)r}_{(k,l)s}=\sum_{r=1}^{m}\hat{p}^{(i,j)r}_{(k,l)s}=P_{q^j_lq^i_kq^j_l}$ where $P_{q^j_lq^i_kq^j_l}$ is the range projection of $q^j_lq^i_kq^j_l$. 
	\end{enumerate} 
	
\end{prop}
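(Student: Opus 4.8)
The plan is to mirror the proof of Proposition \ref{struct_thm1} almost verbatim, exploiting the symmetry between the two statements. The key point is that the roles of $u^{(i,j)r}_{(k,l)s}$ and $u^{(i,j)r*}_{(k,l)s}$ (equivalently, the roles of the source and target vertices, and of the upper and lower index pairs) are interchanged; this interchange is exactly what distinguishes the two propositions, and everything else goes through unchanged.

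First I would record the ``column sum'' identity obtained by transposing the sums in Proposition \ref{banica_iden1}. Since $\beta$ is a bi-unitary co-representation, applying the antipode on the underlying Hopf $*$-algebra gives $\sum_{r=1}^m u^{(i,j)r}_{(k,l)s} = q^i_k q^j_l$ (or its adjoint form), and more precisely, taking adjoints, $\sum_{r=1}^m u^{(i,j)r*}_{(k,l)s} = q^j_l q^i_k$. Then I would compute
\begin{align*}
	\sum_{r=1}^m u^{(i,j)r*}_{(k,l)s} u^{(i,j)r}_{(k,l)s}
	&= \Bigl(\sum_{r=1}^m u^{(i,j)r*}_{(k,l)s}\Bigr)\Bigl(\sum_{r=1}^m u^{(i,j)r}_{(k,l)s}\Bigr)\\
	&= q^j_l q^i_k {(q^j_l q^i_k)}^* = q^j_l q^i_k q^j_l,
\end{align*}
where the first equality uses condition (5) of Definition \ref{maindef} in the form of Proposition \ref{ortho_lem}: the cross terms $u^{(i,j)r*}_{(k,l)s} u^{(i,j)r'}_{(k,l)s}$ with $r\neq r'$ vanish because $(i,j)r$ and $(i,j)r'$ share the same pair of endpoints. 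Each summand $u^{(i,j)r*}_{(k,l)s} u^{(i,j)r}_{(k,l)s}$ is positive and, again by Proposition \ref{ortho_lem}, the products of distinct summands vanish. Hence Lemma \ref{tech_lem2} applies: taking $A_r = u^{(i,j)r*}_{(k,l)s} u^{(i,j)r}_{(k,l)s}$ and $T = q^j_l q^i_k q^j_l$, part (2) of the lemma gives $A_r = \hat p^{(i,j)r}_{(k,l)s} T = \hat p^{(i,j)r}_{(k,l)s} q^j_l q^i_k q^j_l$ with $\hat p^{(i,j)r}_{(k,l)s}$ the range projection of $A_r$ — which coincides with the range projection of $u^{(i,j)r*}_{(k,l)s}$. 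The relations (1) and (3) among the $\hat p^{(i,j)r}_{(k,l)s}$ for fixed $s$ (orthogonality in $r$, summing to $P_{q^j_l q^i_k q^j_l}$) then follow from parts (1) and (3) of Lemma \ref{tech_lem2}.

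To obtain the remaining relations, namely orthogonality in the lower index $s$ and the second column sum $\sum_{r} \hat p^{(i,j)r}_{(k,l)s} = P_{q^j_l q^i_k q^j_l}$ being independent of $s$, I would apply the same argument with the roles of $(k,l)s$ and $(k,l)s'$ swapped — that is, compute $\sum_{s=1}^m u^{(i,j)r*}_{(k,l)s} u^{(i,j)r}_{(k,l)s}$ over the lower index, using the unitarity of $\beta$ together with the identity from Proposition \ref{banica_iden1} that gives $\sum_{s} u^{(i,j)r}_{(k,l)s}$ in terms of $q$'s, exactly as in the proof of Proposition \ref{struct_thm1}. The only genuine subtlety — and the one place where care is needed — is making sure that the range projections appearing as coefficients of $q^j_l q^i_k q^j_l$ on the one side and of the $s$-summed expression on the other are literally the same projections $\hat p^{(i,j)r}_{(k,l)s}$ (range projection of $u^{(i,j)r*}_{(k,l)s}$, unambiguously), so that the two families of ``quantum permutation like'' relations are statements about one and the same matrix; this is handled exactly as in Proposition \ref{struct_thm1}, since the range projection of a positive operator $A^*A$ equals the range projection of $A$, whichever factorization one starts from. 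I do not expect any real obstacle here: the whole proof is a transpose of the preceding one, and I would simply write ``The proof is analogous to that of Proposition \ref{struct_thm1}, interchanging the roles of $i,k$ with $j,l$ and of $\beta$ with $\overline{\beta}$'' with the computation above spelled out for completeness.
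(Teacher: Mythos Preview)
Your proposal is correct and follows essentially the same approach as the paper: both use Proposition \ref{banica_iden1} and Proposition \ref{ortho_lem} to collapse a sum of $u^{(i,j)r*}_{(k,l)s}u^{(i,j)r}_{(k,l)s}$ to $q^j_lq^i_kq^j_l$, then invoke Lemma \ref{tech_lem2}. The only cosmetic difference is that the paper displays the sum over $s$ (using $\sum_s u^{(i,j)r}_{(k,l)s}=q^i_kq^j_l$ directly from Proposition \ref{banica_iden1}(3)) while you display the sum over $r$ (using the antipode-transposed column identity); since both sums are needed to get all three quantum-permutation-like relations, this is immaterial.
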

\begin{proof}
	Using proposition \ref{banica_iden1} and proposition \ref{ortho_lem} we observe that,
	\begin{align*}
		\sum_{s=1}^mu^{(i,j)r*}_{(k,l)s}{u^{(i,j)r}_{(k,l)s}}&=(\sum_{s=1}^mu^{(i,j)r}_{(k,l)s})^*(\sum_{s=1}^mu^{(i,j)r}_{(k,l)s})\\
		&={(q^i_kq^j_l)}^*q^i_kq^j_l=q^j_lq^i_kq^j_l.
	\end{align*} 
	The claims follow from lemma \ref{tech_lem2} as it did for proposition \ref{struct_thm1}.
\end{proof}
\begin{cor}\label{struct_thm1_cor}
	Let $i,j,k,l,r,s$ be as in proposition \ref{struct_thm1} or proposition \ref{struct_thm1.1}. Then we have the following commutation relations:
	\begin{enumerate}
		\item $p^{(i,j)r}_{(k,l)s}q^i_kq^j_lq^i_k=q^i_kq^j_lq^i_kp^{(i,j)r}_{(k,l)s}$;
		\item $\hat{p}^{(i,j)r}_{(k,l)s}q^j_lq^i_kq^j_l=q^j_lq^i_kq^j_l\hat{p}^{(i,j)r}_{(k,l)s}$;
		\item $p^{(i,j)r}_{(k,l)s}u^{(i,j)r}_{(k,l)s}{u^{(i,j)r}_{(k,l)s}}^*=u^{(i,j)r}_{(k,l)s}{u^{(i,j)r}_{(k,l)s}}^*=u^{(i,j)r}_{(k,l)s}{u^{(i,j)r}_{(k,l)s}}^*p^{(i,j)r}_{(k,l)s}$;
		\item $\hat{p}^{(i,j)r}_{(k,l)s}u^{(i,j)r*}_{(k,l)s}{u^{(i,j)r}_{(k,l)s}}=u^{(i,j)r*}_{(k,l)s}{u^{(i,j)r}_{(k,l)s}}=u^{(i,j)r*}_{(k,l)s}{u^{(i,j)r}_{(k,l)s}}\hat{p}^{(i,j)r}_{(k,l)s}.$
	\end{enumerate}
\end{cor}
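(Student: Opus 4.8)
The plan is to read off all four identities directly from the two structure equations $u^{(i,j)r}_{(k,l)s}{u^{(i,j)r}_{(k,l)s}}^{*}=p^{(i,j)r}_{(k,l)s}\,q^i_kq^j_lq^i_k$ and $u^{(i,j)r*}_{(k,l)s}u^{(i,j)r}_{(k,l)s}=\hat p^{(i,j)r}_{(k,l)s}\,q^j_lq^i_kq^j_l$ of Propositions \ref{struct_thm1} and \ref{struct_thm1.1}, using only two elementary facts valid in the enveloping von Neumann algebra $\overline{\mathcal{A}}$: (a) for a positive element $A$ with range projection $P_A$ one has $P_AA=AP_A=A$; and (b) the range (left support) projection of any $x$ equals that of $xx^{*}$, so that $p^{(i,j)r}_{(k,l)s}$ is exactly the range projection of the positive operator $u^{(i,j)r}_{(k,l)s}{u^{(i,j)r}_{(k,l)s}}^{*}$ and $\hat p^{(i,j)r}_{(k,l)s}$ that of $u^{(i,j)r*}_{(k,l)s}u^{(i,j)r}_{(k,l)s}$.

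For (3), since $u^{(i,j)r}_{(k,l)s}{u^{(i,j)r}_{(k,l)s}}^{*}$ is positive with range projection $p^{(i,j)r}_{(k,l)s}$, fact (a) gives immediately $p^{(i,j)r}_{(k,l)s}u^{(i,j)r}_{(k,l)s}{u^{(i,j)r}_{(k,l)s}}^{*}=u^{(i,j)r}_{(k,l)s}{u^{(i,j)r}_{(k,l)s}}^{*}=u^{(i,j)r}_{(k,l)s}{u^{(i,j)r}_{(k,l)s}}^{*}p^{(i,j)r}_{(k,l)s}$; statement (4) is the same argument with $u^{(i,j)r*}_{(k,l)s}u^{(i,j)r}_{(k,l)s}$ and $\hat p^{(i,j)r}_{(k,l)s}$ in place of $u^{(i,j)r}_{(k,l)s}{u^{(i,j)r}_{(k,l)s}}^{*}$ and $p^{(i,j)r}_{(k,l)s}$. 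For (1), note that the left-hand side of the first structure equation is self-adjoint and that $q^i_k,q^j_l$ are self-adjoint projections (being coefficients of the co-representation matrix of the quantum permutation $\alpha$), hence $q^i_kq^j_lq^i_k$ is self-adjoint; taking adjoints in $u^{(i,j)r}_{(k,l)s}{u^{(i,j)r}_{(k,l)s}}^{*}=p^{(i,j)r}_{(k,l)s}q^i_kq^j_lq^i_k$ yields $p^{(i,j)r}_{(k,l)s}q^i_kq^j_lq^i_k=q^i_kq^j_lq^i_kp^{(i,j)r}_{(k,l)s}$, which is (1). Statement (2) follows the same way by taking adjoints in the second structure equation, using that $u^{(i,j)r*}_{(k,l)s}u^{(i,j)r}_{(k,l)s}$ and $q^j_lq^i_kq^j_l$ are self-adjoint.

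Each step is a single line, so I do not anticipate any genuine obstacle; the only point deserving a moment's care is that the range projections $p^{(i,j)r}_{(k,l)s}$ and $\hat p^{(i,j)r}_{(k,l)s}$ live in $\overline{\mathcal{A}}$ rather than in $\mathcal{A}$, which is precisely why Propositions \ref{struct_thm1} and \ref{struct_thm1.1} were stated in that setting and why the identities above are to be read as identities in $\overline{\mathcal{A}}$.
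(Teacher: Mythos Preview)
Your proof is correct and essentially matches the paper's intent: the paper simply says (1), (3) follow from Proposition~\ref{struct_thm1} and (2), (4) from Proposition~\ref{struct_thm1.1}, leaving the reader to unpack precisely the range-projection identities you spell out. The one minor difference is that for (1) and (2) the paper would implicitly appeal to the full conclusion $A_i=p_iT=Tp_i$ of Lemma~\ref{tech_lem2}(2) (used in proving Propositions~\ref{struct_thm1} and \ref{struct_thm1.1}), whereas you obtain the commutation by taking adjoints of the single displayed equation; both routes are one line.
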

\begin{proof}
	(1) and (3) are immediate from  proposition \ref{struct_thm1}, (2) and (4) are immediate from proposition \ref{struct_thm1.1}. 
\end{proof}
\begin{prop}\label{struct_thm2}
	Let $i,j,k,l\in V$ be such that $E^i_j$ and $E^k_l$  are nonempty and $r,s\in\{1,2,..,m\}$. Then we have the following:
	\begin{equation*}
		u^{(i,j)r}_{(k,l)s}=p^{(i,j)r}_{(k,l)s}q^i_kq^j_l=q^i_kq^j_l\hat{p}^{(i,j)r}_{(k,l)s}
	\end{equation*}
	where $p^{(i,j)r}_{(k,l)s}$'s are described in proposition \ref{struct_thm1} and $\hat{p}^{(i,j)r}_{(k,l)s}$'s are described in proposition \ref{struct_thm1.1}.
\end{prop}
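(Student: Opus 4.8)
The plan is to recover the operator $u^{(i,j)r}_{(k,l)s}$ from its polar-type data established in Propositions \ref{struct_thm1} and \ref{struct_thm1.1}, using the orthogonality relations of Proposition \ref{ortho_lem} together with Lemma \ref{tech_lem1} and Lemma \ref{tech_lem2}. The key observation is that $u^{(i,j)r}_{(k,l)s}$ lives in the corner $p_{ij}B(L^2(E))p_{kl}$, so it is already compressed by the relevant projections, and the bi-module identities \ref{bimod_coeff} give us the precise left/right behaviour with respect to the vertex-generators $q^i_k$ and $q^j_l$.

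First I would record the consequences of bi-modularity: from Proposition \ref{bimod_prop} (equation \ref{bimod_coeff}) we have $q^i_k u^{(i,j)r}_{(k,l)s} = u^{(i,j)r}_{(k,l)s}$ and $u^{(i,j)r}_{(k,l)s} q^j_l = u^{(i,j)r}_{(k,l)s}$ after summing over the appropriate edge-fibres (using Proposition \ref{banica_iden1} to identify $\sum_{\tau\in E^k} u^{\sigma}_{\tau}=q^i_k$ etc.). Next, from the relation $u^{(i,j)r}_{(k,l)s}{u^{(i,j)r}_{(k,l)s}}^* = p^{(i,j)r}_{(k,l)s}q^i_kq^j_lq^i_k$ of Proposition \ref{struct_thm1}, and the fact that $p^{(i,j)r}_{(k,l)s}$ is by construction the range projection of $u^{(i,j)r}_{(k,l)s}$, I would apply Lemma \ref{tech_lem1}(1) to conclude $p^{(i,j)r}_{(k,l)s}u^{(i,j)r}_{(k,l)s}=u^{(i,j)r}_{(k,l)s}$; similarly Proposition \ref{struct_thm1.1} with Lemma \ref{tech_lem1}(2) gives $u^{(i,j)r}_{(k,l)s}\hat{p}^{(i,j)r}_{(k,l)s}=u^{(i,j)r}_{(k,l)s}$. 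These are essentially Corollary \ref{struct_thm1_cor}(3),(4).

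Now combine: on one hand $u^{(i,j)r}_{(k,l)s} = p^{(i,j)r}_{(k,l)s}u^{(i,j)r}_{(k,l)s} = p^{(i,j)r}_{(k,l)s}q^i_k u^{(i,j)r}_{(k,l)s}q^j_l$. The remaining task is to show $p^{(i,j)r}_{(k,l)s}q^i_k u^{(i,j)r}_{(k,l)s}q^j_l = p^{(i,j)r}_{(k,l)s}q^i_k q^j_l$, i.e. that summing $u^{(i,j)r}_{(k,l)s}$ over the ``fibre index'' $s$ (or equivalently replacing it by the full vertex projection $q^j_l=\sum_{s}\,\text{(row-sum)}$) does nothing after multiplication by the range projection $p^{(i,j)r}_{(k,l)s}$. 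This is exactly where the commutation relation $p^{(i,j)r}_{(k,l)s}q^i_kq^j_lq^i_k = q^i_kq^j_lq^i_kp^{(i,j)r}_{(k,l)s}$ (Corollary \ref{struct_thm1_cor}(1)) and the orthogonality $u^{(i,j)r}_{(k,l)s}{u^{(i,j)r'}_{(k,l)s}}^*=0$ for $r\neq r'$ get used, via a short computation with $q^i_kq^j_l=\sum_{r'} u^{(i,j)r'}_{(k,l)s}$ applied to $p^{(i,j)r}_{(k,l)s}q^i_kq^j_l$. The symmetric argument starting from $u^{(i,j)r}_{(k,l)s}=u^{(i,j)r}_{(k,l)s}\hat{p}^{(i,j)r}_{(k,l)s}$ and using Corollary \ref{struct_thm1_cor}(2),(4) gives the second equality $u^{(i,j)r}_{(k,l)s}=q^i_kq^j_l\hat{p}^{(i,j)r}_{(k,l)s}$.

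I expect the main obstacle to be the bookkeeping in the middle step: precisely juggling the non-commuting generators $q^i_k$ and $q^j_l$ (these do \emph{not} commute in the Banica/our setting, only their ``sandwiched'' forms behave well) so that the range projection $p^{(i,j)r}_{(k,l)s}$ can be used to kill the off-diagonal terms $\sum_{r'\neq r}u^{(i,j)r'}_{(k,l)s}$. The safe route is to first verify the identity after multiplying on the left by $u^{(i,j)r}_{(k,l)s}{u^{(i,j)r}_{(k,l)s}}^*$ (where everything collapses by the orthogonality of Proposition \ref{ortho_lem} and Proposition \ref{struct_thm1}), then bootstrap to the identity itself using that $p^{(i,j)r}_{(k,l)s}$ is the range projection, so that $p^{(i,j)r}_{(k,l)s}$ is the strong limit of polynomials in $u^{(i,j)r}_{(k,l)s}{u^{(i,j)r}_{(k,l)s}}^*$ in $\overline{\mathcal{A}}$; this is why the statement is phrased over the enveloping von Neumann algebra.
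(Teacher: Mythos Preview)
Your proposal is correct and uses the same ingredients as the paper, but the route you settle on (the ``safe route'' via strong-limit bootstrapping in $\overline{\mathcal{A}}$) is more circuitous than what the paper does. The paper simply expands
\[
\bigl(u^{(i,j)r}_{(k,l)s}-p^{(i,j)r}_{(k,l)s}q^i_kq^j_l\bigr)\bigl(u^{(i,j)r}_{(k,l)s}-p^{(i,j)r}_{(k,l)s}q^i_kq^j_l\bigr)^*
\]
term by term, rewrites $q^i_kq^j_l=\sum_{r'}u^{(i,j)r'}_{(k,l)s}$ via Proposition~\ref{banica_iden1}, kills the cross-terms with the orthogonality of Proposition~\ref{ortho_lem}, and uses Corollary~\ref{struct_thm1_cor}(1),(3) to see that everything cancels. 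This is a single algebraic computation; no approximation of the range projection by polynomials in $uu^*$ is needed. The second identity is handled symmetrically with $(u-q^i_kq^j_l\hat p)^*(u-q^i_kq^j_l\hat p)$.

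In fact the sketch you give \emph{before} retreating to the safe route is already complete and is arguably cleaner than either argument: once you write $p^{(i,j)r}_{(k,l)s}q^i_kq^j_l=p^{(i,j)r}_{(k,l)s}\sum_{r'}u^{(i,j)r'}_{(k,l)s}$, Proposition~\ref{struct_thm1}(1) (mutual orthogonality of the range projections in the index $r$) together with $p^{(i,j)r'}_{(k,l)s}u^{(i,j)r'}_{(k,l)s}=u^{(i,j)r'}_{(k,l)s}$ gives $p^{(i,j)r}_{(k,l)s}u^{(i,j)r'}_{(k,l)s}=0$ for $r'\neq r$, so the sum collapses to $p^{(i,j)r}_{(k,l)s}u^{(i,j)r}_{(k,l)s}=u^{(i,j)r}_{(k,l)s}$ in one line. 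The anticipated ``obstacle'' of juggling the non-commuting $q^i_k$ and $q^j_l$ never actually arises, and no strong limits are required.
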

\begin{proof}
	From proposition \ref{banica_iden1}, proposition \ref{struct_thm1} and corollary \ref{struct_thm1_cor} we observe that, 
	\begin{align*}
		&(u^{(i,j)r}_{(k,l)s}-p^{(i,j)r}_{(k,l)s}q^i_kq^j_l)(u^{(i,j)r}_{(k,l)s}-p^{(i,j)r}_{(k,l)s}q^i_kq^j_l)^*\\
		=&u^{(i,j)r}_{(k,l)s}{u^{(i,j)r}_{(k,l)s}}^*-u^{(i,j)r}_{(k,l)s}q^j_lq^i_kp^{(i,j)r}_{(k,l)s}-p^{(i,j)r}_{(k,l)s}q^i_kq^j_l{u^{(i,j)r}_{(k,l)s}}^*+p^{(i,j)r}_{(k,l)s}q^i_kq^j_lq^i_kp^{(i,j)r}_{(k,l)s}\\
		=&u^{(i,j)r}_{(k,l)s}{u^{(i,j)r}_{(k,l)s}}^*-u^{(i,j)r}_{(k,l)s}(\sum_{r'=1}^m{u^{(i,j)r'}_{(k,l)s}}^*)p^{(i,j)r}_{(k,l)s}-p^{(i,j)r}_{(k,l)s}(\sum_{r'=1}^mu^{(i,j)r'}_{(k,l)s}){u^{(i,j)r}_{(k,l)s}}^*\\
		&+u^{(i,j)r}_{(k,l)s}{u^{(i,j)r}_{(k,l)s}}^*p^{(i,j)r}_{(k,l)s}\\
		=&u^{(i,j)r}_{(k,l)s}{u^{(i,j)r}_{(k,l)s}}^*-u^{(i,j)r}_{(k,l)s}{u^{(i,j)r}_{(k,l)s}}^*p^{(i,j)r}_{(k,l)s}-p^{(i,j)r}_{(k,l)s}u^{(i,j)r}_{(k,l)s}{u^{(i,j)r}_{(k,l)s}}^*+u^{(i,j)r}_{(k,l)s}{u^{(i,j)r}_{(k,l)s}}^*p^{(i,j)r}_{(k,l)s}\\
		=&0.
	\end{align*}
	We conclude that
	\begin{equation*}
		u^{(i,j)r}_{(k,l)s}-p^{(i,j)r}_{(k,l)s}q^i_kq^j_l=0\quad\text{and hence}\quad	u^{(i,j)r}_{(k,l)s}=p^{(i,j)r}_{(k,l)s}q^i_kq^j_l.
	\end{equation*}
	The second identity follows by using (2) and (4) of corollary \ref{struct_thm1_cor} and similar computation as above.  
\end{proof}
\begin{rem}
	If we consider $\beta$ to be a quantum permutation of the edge set $E$, that is, a co-action on $(V,E)$ preserving its quantum symmetry in Bichon's sense, it follows that $u^{(i,j)r}_{(k,l)s}=p^{(i,j)r}_{(k,l)s}=\hat{p}^{(i,j)r}_{(k,l)s}$.  
\end{rem} 

\subsection{Description of $Q^{Bic}_{(V,E)}$ for uniform multigraphs:}
We  introduce the following notations that we are going to use in this subsection.

\begin{nota}
	Let $(V,E)$ be a uniform multigraph of degree $m$ and $n=|\overline{E}|$ where $(V,\overline{E})$ is the underlying single edged graph of $(V,E)$. We consider $n$ times free product of the quantum permutation group $S^+_m$. We write the canonical inclusion maps of the free product ${S^+_m}^{*n}$ as $\nu_{(i,j)}:S^+_m\rightarrow {S^+_m}^{*n}$ where $(i,j)\in\overline{E}$. Let $(P^r_s)_{r,s=1,..,m}$ be the matrix of generators of $S^+_m$ satisfying quantum permutation relations. We will write,  
	\begin{equation*}
	P^{(i,j)r}_s=\nu_{(i,j)}(P^r_s)\quad\text{where}\quad (i,j)\in \overline{E}\quad\text{and}\quad r,s=1,2,..,m.
	\end{equation*}
\end{nota}

\begin{thm}\label{wr_pdt_thm}
	Let $(V,E)$ be a uniform multigraph of degree $m$. There is a natural co-action of $Q^{Bic}_{(V,\overline{E})}$ on the algebra ${S^+_m}^{*n}$ which is given by  	
	\begin{equation}\label{wr_pdt_co-action}
	\alpha(\nu_{(k,l)}(a))=\sum_{(i,j)\in \overline{E}}\nu_{(i,j)}(a)\otimes x^i_kx^j_l,\quad (k,l)\in \overline{E}, a\in S^+_m.
	\end{equation} 
	where $(x^i_j)_{i,j\in V}$ is the co-representation matrix of the induced co-action of $Q^{Bic}_{(V,\overline{E})}$ on $C(V)$.  
	 It follows that, with respect to the co-action $\alpha$,
	\begin{equation*}
	Q^{Bic}_{(V,E)}\cong S^+_m*_w Q^{Bic}_{(V,\overline{E})}.
	\end{equation*}
\end{thm}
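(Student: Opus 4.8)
<br>

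The plan is to construct mutually inverse $*$-homomorphisms between $Q^{Bic}_{(V,E)}$ and $\mathcal{W}:=S^{+}_{m}\,*_{w}\,S^{Bic}_{(V,\overline{E})}$, one of which is visibly a morphism of compact quantum groups. Throughout, edges are written $(k,l)s$ via a fixed representation of $(V,E)$ (notation \ref{uni_mult}); $(x^{i}_{j})_{i,j\in V}$ is the fundamental matrix of $S^{Bic}_{(V,\overline{E})}$ and $P^{(i,j)r}_{s}:=\nu_{(i,j)}(P^{r}_{s})$ the image in $\mathcal{W}$ of the generators of the $(i,j)$-th free copy of $S^{+}_{m}$. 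I would use that $S^{Bic}_{(V,\overline{E})}$ acts faithfully on $C(\overline{E})$ through $\alpha^{(2)}|_{C(\overline{E})}$, so it is a quantum subgroup of $S^{+}_{|\overline{E}|}$ whose permutation matrix has entries $x^{i}_{k}x^{j}_{l}$ for $(i,j),(k,l)\in\overline{E}$, and the co-action \ref{wr_pdt_co-action} is exactly the canonical wreath co-action of \ref{wr_pft_def_cann_action} for this subgroup. By remark \ref{ban_iden1_rem_bic}, $x^{i}_{k}$ and $x^{j}_{l}$ commute whenever $(i,j),(k,l)\in\overline{E}$, so each $x^{i}_{k}x^{j}_{l}$ is a projection, and the defining relation of the free wreath product reads $P^{(i,j)r}_{s}\,(x^{i}_{k}x^{j}_{l})=(x^{i}_{k}x^{j}_{l})\,P^{(i,j)r}_{s}$ for all $(k,l)\in\overline{E}$.

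First I would build $\Phi\colon Q^{Bic}_{(V,E)}\to\mathcal{W}$. Put $w^{(i,j)r}_{(k,l)s}:=P^{(i,j)r}_{s}\,x^{i}_{k}x^{j}_{l}$. The step is to verify that the matrix $(w^{\sigma}_{\tau})_{\sigma,\tau\in E}$ satisfies every relation of definition \ref{Q^Bic_def}: self-adjointness and idempotence follow from the wreath commutation relation and the fact that $P^{(i,j)r}_{s}$ and $x^{i}_{k}x^{j}_{l}$ are commuting projections; the row and column sums equal $1$ because $\sum_{s}P^{(i,j)r}_{s}=\sum_{r}P^{(i,j)r}_{s}=1$ in $S^{+}_{m}$ while $\sum_{(a,b)\in\overline{E}}x^{a}_{k}x^{b}_{l}=1$ (the edge permutation of $S^{Bic}_{(V,\overline{E})}$ is a quantum permutation) and $x^{a}_{k}x^{b}_{l}=0$ whenever $(a,b)\in\overline{E}$ but $(k,l)\notin\overline{E}$ (invariance of $L^{2}(\overline{E})^{\perp}$ under $\alpha^{(2)}$, i.e.\ theorem \ref{wt_preserve}); and relations (3)--(5) of definition \ref{Q^Bic_def} collapse, after the same bookkeeping, to the quantum permutation relations for $x$ together with $x$ commuting with the adjacency matrix of $(V,\overline{E})$ (theorem \ref{wt_preserve}, corollary \ref{cor_ban_iden2}). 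The universal property of $Q^{Bic}_{(V,E)}$ (theorem \ref{univ_obj_Bic}) then yields a unique morphism of compact quantum groups $\Phi$ with $\Phi(u^{\sigma}_{\tau})=w^{\sigma}_{\tau}$, and $\Phi$ carries the universal co-action $\beta_{Bic}$ to the co-action $\beta_{w}$ of $\mathcal{W}$ on $L^{2}(E)$ with matrix $(w^{\sigma}_{\tau})$.

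Next I would build $\Psi\colon\mathcal{W}\to Q^{Bic}_{(V,E)}$ via the universal property of the free wreath product. Let $(u^{\sigma}_{\tau})$ be the fundamental matrix of $Q^{Bic}_{(V,E)}$ and $(q^{i}_{j})$ its induced vertex permutation. By proposition \ref{banica_iden2}, $q$ commutes with $W=m\cdot(\text{adjacency of }(V,\overline{E}))$, and by remark \ref{ban_iden1_rem_bic} the $q^{i}_{k},q^{j}_{l}$ commute for $(i,j),(k,l)\in\overline{E}$; hence $q$ is a Bichon co-action on $(V,\overline{E})$ and universality of $S^{Bic}_{(V,\overline{E})}$ gives a morphism $x^{i}_{j}\mapsto q^{i}_{j}$. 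For the $S^{+}_{m}$-copies, fix $(i,j)\in\overline{E}$ and set $\widetilde{P}^{(i,j)r}_{s}:=\sum_{(k,l)\in\overline{E}}u^{(i,j)r}_{(k,l)s}$. Combining the bimodule identities \ref{bimod_coeff}, propositions \ref{banica_iden1}, \ref{ortho_lem} and \ref{wr_pdt_proj_prop} — together with the observation (valid here since the $u^{\sigma}_{\tau}$ are self-adjoint) that $u^{\sigma}_{\tau}q^{s(\sigma)}_{k}=0=u^{\sigma}_{\tau}q^{t(\sigma)}_{l}$ for $k\neq s(\tau)$, $l\neq t(\tau)$ — one checks: for fixed $(i,j),r,s$ the projections $\{u^{(i,j)r}_{(k,l)s}:(k,l)\in\overline{E}\}$ are pairwise orthogonal, so $\widetilde{P}^{(i,j)r}_{s}$ is a projection; $\sum_{s}\widetilde{P}^{(i,j)r}_{s}=\sum_{(k,l)}q^{i}_{k}q^{j}_{l}=1$ and likewise $\sum_{r}\widetilde{P}^{(i,j)r}_{s}=1$, so the $\widetilde{P}^{(i,j)r}_{s}$ ($1\le r,s\le m$) generate a copy of $S^{+}_{m}$; $\widetilde{P}^{(i,j)r}_{s}$ commutes with each $q^{i}_{k}q^{j}_{l}$; and, crucially, $u^{(i,j)r}_{(k,l)s}=\widetilde{P}^{(i,j)r}_{s}\,q^{i}_{k}q^{j}_{l}$. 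These data match the relations defining $\mathcal{W}$, so there is a $*$-homomorphism $\Psi$ with $\Psi(x^{i}_{j})=q^{i}_{j}$ and $\Psi(P^{(i,j)r}_{s})=\widetilde{P}^{(i,j)r}_{s}$.

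Finally I would check $\Psi\Phi=\mathrm{id}$ and $\Phi\Psi=\mathrm{id}$ on generators: $\Psi\Phi(u^{(i,j)r}_{(k,l)s})=\widetilde{P}^{(i,j)r}_{s}q^{i}_{k}q^{j}_{l}=u^{(i,j)r}_{(k,l)s}$ by the factorization above; $\Phi\Psi(P^{(i,j)r}_{s})=\sum_{(k,l)\in\overline{E}}w^{(i,j)r}_{(k,l)s}=P^{(i,j)r}_{s}\sum_{(k,l)\in\overline{E}}x^{i}_{k}x^{j}_{l}=P^{(i,j)r}_{s}$; and $\Phi\Psi(x^{i}_{j})$ equals the $(i,j)$-entry of the induced vertex permutation of $\beta_{w}$ (theorem \ref{bi-mod_equiv_source_mult}), which a short computation using $\sum_{s}P^{(i,j')r}_{s}=1$ and $x^{i}_{k}x^{j'}_{l}=0$ for $(k,l)\notin\overline{E}$ shows to be $x^{i}_{j}$. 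Hence $\Phi$ is a bijective morphism of compact quantum groups, i.e.\ $Q^{Bic}_{(V,E)}\cong\mathcal{W}$, and the isomorphism carries $\beta_{Bic}$ to the canonical co-action of $S^{+}_{m}\,*_{w}\,S^{Bic}_{(V,\overline{E})}$ built from $\alpha$ of \ref{wr_pdt_co-action} — the $(i,j)$-th copy of $S^{+}_{m}$ permuting the fibre $E^{i}_{j}$ and $S^{Bic}_{(V,\overline{E})}$ permuting the fibres through $\alpha$. The main obstacle is the double bookkeeping in the middle two paragraphs: verifying that $w^{(i,j)r}_{(k,l)s}=P^{(i,j)r}_{s}x^{i}_{k}x^{j}_{l}$ satisfies every clause of definition \ref{Q^Bic_def} (in particular that it is a projection and that relations (3)--(5) hold, including the loop case), and dually proving the orthogonality of $\{u^{(i,j)r}_{(k,l)s}\}_{(k,l)}$ and the factorization $u^{(i,j)r}_{(k,l)s}=\widetilde{P}^{(i,j)r}_{s}q^{i}_{k}q^{j}_{l}$, which is precisely what forces $\widetilde{P}^{(i,j)r}_{s}$ to depend only on the target edge $(i,j)$ and the labels $r,s$ rather than on the source edge $(k,l)$.
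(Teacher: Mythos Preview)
Your proposal is correct and follows essentially the same route as the paper: define $\Phi$ via the universal property of $Q^{Bic}_{(V,E)}$ using the co-action with matrix entries $P^{(i,j)r}_{s}x^{i}_{k}x^{j}_{l}$, then build the inverse by setting $\widetilde{P}^{(i,j)r}_{s}=\sum_{(k,l)\in\overline{E}}u^{(i,j)r}_{(k,l)s}$ (the paper calls these $R^{(i,j)r}_{s}$), verifying the quantum permutation relations, the factorization $u^{(i,j)r}_{(k,l)s}=\widetilde{P}^{(i,j)r}_{s}q^{i}_{k}q^{j}_{l}$, and the wreath commutation, and finally checking the two maps are mutual inverses on generators. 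The only cosmetic difference is that the paper explicitly verifies the co-product identity for $R^{(i,j)r}_{s}$ (its Claim~3) to obtain $\Psi$ directly as a quantum group homomorphism, whereas you construct $\Psi$ as a bare $*$-homomorphism and deduce the quantum group structure from $\Psi=\Phi^{-1}$; both are fine.
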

\begin{proof} We fix an edge-labeling of the multigraph $(V,E)$ (see definition \ref{uni_mult}).
	The quantum automorphism group $Q^{Bic}_{(V,\overline{E})}$ is generated by coefficients of the quantum permutation matrix $(x^i_j)_{i,j\in V}$ where $(V,\overline{E})$ is the underlying single edged graph of $(V,E)$. As $x^i_k$ and $x^j_l$ commute with each other for all $(i,j),(k,l)\in\overline{E}$, it follows that $\alpha$ is a co-action of $Q^{Bic}_{(V,\overline{E})}$ on the C* algebra ${S^+_m}^{*n}$. 
	\par 
	  We observe that there is a co-action $\gamma$ of $S^+_m*_w Q^{Bic}_{(V,\overline{E})}$  on the multigraph $(V,E)$ which preserves its quantum symmetry in Bichon's sense. We define $\gamma:C(E)\rightarrow C(E)\otimes (S^+_m*_w Q^{Bic}_{(V,\overline{E})})$ to be,
	\begin{equation}\label{wr_pr_co-action}
	\gamma(\chi_{(k,l)s})=\sum^m_{\substack{r=1\\(i,j)\in\overline{E}}}\chi_{(i,j)r}\otimes P^{(i,j)r}_sx^i_kx^j_l;\quad (k,l)\in \overline{E}\:\:\text{and}\:\:s=1,2,..,m.
	\end{equation}
By universality of $Q^{Bic}_{(V,E)}$, we have a quantum group homomorphism $\Phi:Q^{Bic}_{(V,E)}\rightarrow S^+_m*_w Q^{Bic}_{(V,\overline{E})}$ satisfying
	\begin{equation*}
	\Phi(u^{(i,j)r}_{(k,l)s})=P^{(i,j)r}_sx^i_kx^j_l\quad \text{where}\quad (i,j),(k,l)\in\overline{E}\quad\text{and}\quad r,s=1,2,..,m.
	\end{equation*}
	 Let us denote $(q^i_j)_{i,j\in V}$ to be the co-representation matrix of the induced co-action of $Q^{Bic}_{(V,E)}$ on $C(V)$. Now we  construct the inverse of $\Phi$ to show that it is in fact an isomorphism of compact quantum groups. 
	\par  
	For $(i,j)\in\overline {E}$ and $r,s\in\{1,2,..,m\}$ we define,
	\begin{equation*}
	R^{(i,j)r}_s=\sum_{(k,l)\in\overline{E}}u^{(i,j)r}_{(k,l)s}.
	\end{equation*}
	 We proceed through following claims.\par 
	\textbf{Claim 1:}
	Let $(i,j)\in \overline{E}$. The coefficients of the matrix $(R^{(i,j)r}_s)_{r,s=1,..,m}$ satisfy quantum permutation relations.\par 
	
	We observe that,
	\begin{align*}
	{R^{(i,j)r}_s}^{2}=&R^{(i,j)r}_s={R^{(i,j)r}_s}^*\\ 
	\text{and}\quad\sum_{r=1}^m R^{(i,j)r}_s= \sum_{\substack{r=1\\(k,l)\in\overline{E}}}^m u^{(i,j)r}_{(k,l)s}=&\sum_{(k,l)\in\overline{E}}q^i_kq^j_l=1=\sum_{\substack{s=1\\(k,l)\in\overline{E}}}^mu^{(i,j)r}_{(k,l)s}=\sum_{s=1}^m R^{(i,j)r}_s.
	\end{align*}\par 
	\textbf{Claim 2:}
	For $(i,j),(k,l)\in\overline{E}$ and $r,s\in\{1,2,..,m\}$, we have the following relations:
	\begin{equation*}
	u^{(i,j)r}_{(k,l)s}=R^{(i,j)r}_sq^i_kq^j_l\quad\text{and}\quad R^{(i,j)r}_sq^i_kq^j_l=q^i_kq^j_lR^{(i,j)r}_s.
	\end{equation*}\par  
    We observe that,
	\begin{align*}
	R^{(i,j)r}_sq^i_kq^j_l&=(\sum_{(k',l')\in \overline{E}}u^{(i,j)r}_{(k',l')s})(\sum_{s'=1}^m u^{(i,j)r}_{(k,l)s'})=u^{(i,j)r}_{(k,l)s},\\
	q^i_kq^j_lR^{(i,j)r}_s&=(\sum_{s'=1}^m u^{(i,j)r}_{(k,l)s'})(\sum_{(k',l')\in \overline{E}}u^{(i,j)r}_{(k',l')s})=u^{(i,j)r}_{(k,l)s}.
	\end{align*}
	Hence claim 2 follows.\par

	\textbf{Claim 3:}
	Let $\Delta_{Bic}$ denote the co-product on $Q^{Bic}_{(V,E)}$. The co-product identities in theorem \ref{copdt_wr_pdt} hold, that is,
	\begin{align*}
	\Delta_{Bic}(q^i_j)&=\sum_{k\in V}q^i_k\otimes q^k_j\\
	\text{and}\quad 
	\Delta_{Bic}(R^{(i,j)r}_s)&=\sum_{\substack{s'=1\\(k,l)\in\overline{E}}}^m(R^{(i,j)r}_{s'}\otimes R^{(k,l)s'}_s)(q^i_kq^j_l\otimes 1).
	\end{align*}\par 
	
	The first identity is immediate. To prove the second one we observe that,
	\begin{align*}
	\Delta_{Bic}(R^{(i,j)r}_s)=\Delta_{Bic}(\sum_{(k',l')\in\overline{E}}u^{(i,j)r}_{(k',l')s})
	&=\sum_{(k',l')\in\overline{E}}(\sum_{\substack{s'=1\\(k,l)\in \overline{E}}}^mu^{(i,j)r}_{(k,l)s'}\otimes u^{(k,l)s'}_{(k',l')s})\\
	&=\sum_{\substack{s'=1\\(k,l)\in\overline{E}}}^m R^{(i,j)r}_{s'} q^i_kq^j_l\otimes (\sum_{(k',l')\in\overline{E}}u^{(k,l)s'}_{(k',l')s})\\
	&=\sum_{\substack{s'=1\\(k,l)\in\overline{E}}}^m R^{(i,j)r}_{s'} q^i_kq^j_l\otimes R^{(k,l)s'}_s.
	\end{align*}
	Hence the second identity in claim 3 follows. \par 
	
	Using claim 1, claim 2, claim 3 and universality of free wreath product we get a surjective quantum group homomorphism $\Psi:S^+_m*_w Q^{Bic}_{(V,\overline{E})}\rightarrow Q^{Bic}_{(V,E)}$ such that the following hold:
	\begin{equation*}
	\Psi(x^i_j)=q^i_j\quad\text{and}\quad\Psi(P^{(i',j')r}_s)=R^{(i',j')r}_s
	\end{equation*}
	where $i,j\in V$,$(i',j')\in \overline{E}$ and $r,s=1,2,..,m$.\par 
	
	It is clear that $\Phi$ and $\Psi$ are inverses of each other as it is such on the set of generators. Hence theorem \ref{wr_pdt_thm} is proved. 
\end{proof}

\section{\textbf{Examples and computations}}\label{chap_examp_appl}
In this section we compute quantum automorphism groups of a few selected multigraphs.

\subsubsection*{\textbf{Example 1:}}

\begin{figure}[h]
	\centering
	\rotatebox[origin=c]{45}{
		\begin{tikzpicture}[scale=1.5]
			\node at (0,0) {$\bullet$};
			\node at (0,.2) {\rotatebox{-45}{$a$}};
			\draw[thick, out=135, in=180, looseness=1.2] (0,0) to (0,1) node{\backmidarrow};
			\draw[thick, out=0, in=45, looseness=1.2] (0,1) to (0,0);
			\draw[thick, out=135, in=90, looseness=1.2] (0,0) to (-1,0) node{\downmidarrow};
			\draw[thick, out=-90, in=-135, looseness=1.2] (-1,0) to (0,0);
			\draw[dashed, out=-135, in=180, looseness=1.2] (0,0) to (0,-1) node{\midarrow};
			\draw[dashed, out=0, in=-45, looseness=1.2] (0,-1) to (0,0);
			\draw[dashed, out=45, in=90, looseness=1.2] (0,0) to (1,0) node{\upmidarrow};
			\draw[dashed, out=-45, in=-90, looseness=1.2] (0,0) to (1,0); 
	\end{tikzpicture}}
	\caption{A multigraph with $n$ loops on a single vertex}
	\label{4_loops}
\end{figure}
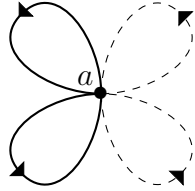
Let us consider the multigraph in figure \ref{4_loops} where the vertex set has a single element $a$ and edge set $E$ has $n$ number of loops, that is, $n$ number of edges with single endpoint vertex $a$. The universal CQG associated with $(V,E)$, $Q^{Ban}_{(V,E)}$ is the universal C* algebra generated by coefficients of the matrix $(u^{\sigma}_{\tau})_{\sigma,\tau\in E}$ satisfying the following relations:
\begin{align*}
	\sum_{\tau\in E}u^{\sigma_1}_{\tau}u^{\sigma_2*}_{\tau}=\delta_{\sigma_1,\sigma_2}1,&\quad\sum_{\tau\in E}u^{\sigma_1*}_{\tau}u^{\sigma_2}_{\tau}=\delta_{\sigma_1,\sigma_2}1,\\
	\sum_{\tau\in E}u^{\tau}_{\sigma_1}u^{\tau*}_{\sigma_2}=\delta_{\sigma_1,\sigma_2}1,&\quad \sum_{\tau\in E}u^{\tau*}_{\sigma_1}u^{\tau}_{\sigma_2}=\delta_{\sigma_1,\sigma_2}1,\\ 
	\text{and}\quad 
	\sum_{\tau\in E}u^{\sigma_1}_{\tau}&=1
\end{align*} 
where $\sigma_1,\sigma_2\in E$. 
\par 

From corollary \ref{wr_pdt_proj}, it follows that the category $\mathcal{C}^{sym}_{(V,E)}$  admits  universal object which is $Q^{Bic}_{(V,E)}$. Moreover, from theorem \ref{wr_pdt_thm} it follows that $Q^{Bic}_{(V,E)}=S^+_n$ where $S^+_n$ is the quantum permutation group on n elements. 

\subsubsection*{\textbf{Example 2:}}
\begin{figure}[h]
	\centering
	\begin{tikzpicture}[scale=2]
		\node at (-1,0) {$\bullet$};
		\node at (1,0) {$\bullet$};
  \draw[dashed, out=70, in=110, looseness=1.6] (-1,0) edge node{\midarrow} (1,0);
  \draw[dashed, out=-70, in=-110, looseness=1.6] (-1,0) edge node{\backmidarrow} (1,0);
		\draw[thick] (-1,0) edge[bend left] node{\midarrow} (1,0);
		\draw[thick] (-1,0) edge[bend right] node{\backmidarrow} (1,0);
		\draw[dashed, out=60, in=120, looseness=1.2]  (-1,0) to node{\midarrow} (1,0);
		\draw[dashed, out=-60, in=-120, looseness=1.2]  (-1,0) to node{\backmidarrow} (1,0);
		\node at (-1.2,0) {$a$};
		\node at (1.2,0) {$b$};
	\end{tikzpicture}
	\caption{A multigraph with two vertices and $2n$ number of edges.}
	\label{2_vertex_multi}
\end{figure}
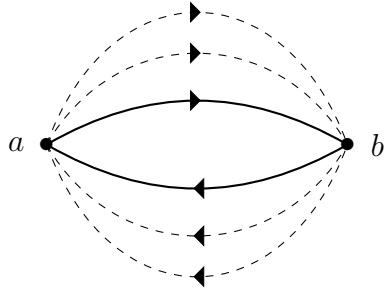

We consider the multigraph $(V,E)$ in figure \ref{2_vertex_multi}. The vertex set has two elements $a$ and $b$, edge set $E$ consists of $n$ edges from $a$ to $b$ and $n$ edges from $b$ to $a.$ Let us fix an edge-labeling for $(V,E)$ (see definition \ref{uni_mult}). Using corollary \ref{wr_pdt_proj} it follows that, $\mathcal{C}^{sym}_{(V,E)}$ admits universal object which is $Q^{Bic}_{(V,E)}$. The quantum automorphism group of the underlying single edged graph $Q^{Ban}_{(V,\overline{E})}$ is $C(\mathbb{Z}_2)$ and is generated by the  coefficients of the following matrix:
$$
\begin{bmatrix}
q&1-q\\
1-q&q

\end{bmatrix}
$$ where $q$ is a projection. Let $(u^{(i,j)r}_{(k,l)s})_{(i,j)r,(k,l)s\in E}$ be the matrix of canonical generators of $Q^{Bic}_{(V,E)}$. For $r,s=1,..,n$ let us define,
\begin{equation*}
P^{(a,b)r}_s=u^{(a,b)r}_{(a,b)s} + u^{(a,b)r}_{(b,a)s}\quad\text{and}\quad P^{(b,a)r}_s=u^{(b,a)r}_{(b,a)s}+ u^{(b,a)r}_{(a,b)s}  
\end{equation*}
We observe that,
\begin{enumerate}
    \item The matrices $(P^{(a,b)r}_s)_{r,s=1,..,n}$ and $(P^{(b,a)r}_s)_{r,s=1,..,n}$ are quantum permutation matrices.
    \item $P^{(a.b)r}_s$ and $P^{(b,a)r}_s$ commute with $q$ for all $r,s=1,..,n$.
\end{enumerate}
Therefore we have, 
\begin{equation*}
    Q^{Bic}_{(V,E)}=C^*\{P^{(a.b)r}_s, P^{(b,a)r}_s, q\: |\: r,s=1,..,n\}= (S^+_n*S^+_n)\otimes C(\mathbb{Z}_2)
\end{equation*}
Moreover. the co-product $\Delta_{Bic}$ on $Q^{Bic}_{(V,E)}$ is given by,
\begin{align*}
    \Delta_{Bic} (q)&=q\otimes q + (1-q) \otimes (1-q)\\
    \Delta_{Bic}(P^{(a,b)r}_s)&=\sum^n_{s'=1} P^{(a,b)r}_sq \otimes P^{(a,b)s'}_s + P^{(a,b)r}_s (1-q) \otimes P^{(b,a)s'}_s\\
    \Delta_{Bic}(P^{(b,a)r}_s)&=\sum^n_{s'=1} P^{(b,a)r}_sq \otimes P^{(b,a)s'}_s + P^{(b,a)r}_s (1-q) \otimes P^{(a,b)s'}_s.
\end{align*}

\subsubsection*{\textbf{Example 3:}}
\begin{figure}[h]
	\centering
	\begin{tikzpicture}[scale=4]
		\node at (-1,0) {$\bullet$};
		\node at (1,0) {$\bullet$};
		\node at (0,1) {$\bullet$};
		\draw[thick] (-1,0) edge node{\backslantmidarrow} (0,1);
		\draw[thick, out=67, in=203, looseness=1.2] (-1,0) edge node{\backslantmidarrow} (0,1);
		\draw[dashed, out=23, in=-113, looseness =1.2] (-1,0) edge node{\backslantmidarrow} (0,1);
		\draw[thick] (-1,0) edge node{\midarrow} (1,0);
		\draw[thick, out=-22, in=-158] (-1,0) edge node{\midarrow} (1,0);
		\draw[dashed, out=22, in=158] (-1,0) edge node{\midarrow} (1,0);
		\draw[thick] (1,0) edge node{\backopslantmidarrow} (0,1);
		\draw[thick, out=113, in=-23, looseness=1.2] (1,0) edge node{\backopslantmidarrow} (0,1);
		\draw[dashed, out=157, in=-67,looseness=1.2] (1,0) edge node{\backopslantmidarrow} (0,1);
		\node at (-1.2,0) {$b$};
		\node at (1.2,0) {$c$};
		\node at (0,1.2) {$a$}; 
	\end{tikzpicture}
	\caption{A multigraph version of a triangle with $n$ edges between two vertices.}
	\label{trian_multi}
\end{figure}
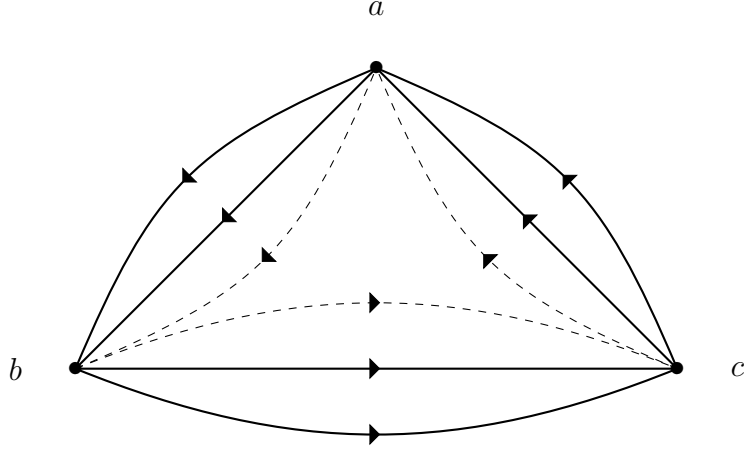
We consider the multigraph $(V,E)$ in figure \ref{trian_multi} where there are three vertices $a,b,c$ and $n$ edges from $a$ to $b$, $b$ to $c$ and $c$ to $a$ respectively. We fix an edge-labeling for $(V,E)$ (see definition \ref{uni_mult}). From corollary \ref{wr_pdt_proj} it follows that $\mathcal{C}^{sym}_{(V,E)}$ admits universal object which is $Q^{Bic}_{(V,E)}$. If $(V,\overline{E})$ is the underlying single edged graph of $(V,E)$, then it follows that $Q^{Ban}_{(V,\overline{E})}$, which is $C(\mathbb{Z}_3)$, is generated by coefficients of the matrix,
$$
\begin{pmatrix}
q_1&q_2&q_3\\
q_3&q_1&q_2\\
q_2&q_3&q_1
\end{pmatrix}
$$
where $q_1, q_2, q_3$ are mutually orthogonal projections and $q_1+q_2+q_3=1$.

Let $(u^{(i,j)r}_{(k,l)s})_{(i,j)r,(k,l)s\in E}$ be the matrix of canonical generators of $Q^{Bic}_{(V,E)}$. For $(i,j)\in \overline{E}$ and $r,s\in \{1,2,..,n\}$, let us define, 
\begin{equation*}
P^{(i,j)r}_s=\sum_{(k,l)\in \overline{E}} u^{(i,j)r}_{(k,l)s}.
 \end{equation*}
The quantum automorphism group $Q^{Bic}_{(V,E)}$ is generated by the following set
of generators:
\begin{equation*}
\cup_{(i,j)\in\overline{E}}\{P^{(i,j)r}_s|r,s=1,2,..,n\}\cup \{q_1,q_2,q_3\}
\end{equation*}
such that the following conditions hold:
\begin{enumerate}
\item $q_1,q_2$ and $q_3$ are mutually orthogonal projections such that $q_1+q_2+q_3=1$.
\item For each $(i,j)\in\overline{E}$, the matrix $(P^{(i,j)r}_s)_{r,s=1,2,..,n}$ is a \textbf{quantum permutation matrix}.
\item  $P^{(i,j)r}_s$ commutes with $q_k$ for all $k=1,2,3; (i,j)\in \overline{E}; r,s=1,2,..,n$.
\end{enumerate}
It is clear that as an algebra $Q^{Bic}_{(V,E)}$ is $(S^+_n * S^+_n * S^+_n)\otimes C(\mathbb{Z}_3)$.
Moreover the co-product $\Delta_{Bic}$ on $Q^{Bic}_{(V,E)}$ is given by
\begin{align*}
&\Delta_{Bic}(q_1)=q_1\otimes q_1+q_2\otimes q_3+q_3\otimes q_2,\quad \Delta_{Bic}(q_2)=q_3\otimes q_3 + q_1\otimes q_2 + q_2\otimes q_1,\\ 
&\qquad \qquad \qquad\qquad\Delta_{Bic}(q_3)=q_2\otimes q_2 + q_1\otimes q_3 + q_3\otimes q_1;\\
&\Delta_{Bic}(P^{(a,b)r}_s)=\sum^n_{s'=1}(P^{(a,b)r}_{s'}q_1\otimes P^{(a,b)s'}_s) + (P^{(a,b)r}_{s'}q_2\otimes P^{(b,c)s'}_s) + (P^{(a,b)r}_{s'}q_3\otimes P^{(c,a)s'}_s),\\
&\Delta_{Bic}(P^{(b,c)r}_s)=\sum^n_{s'=1}(P^{(b,c)r}_{s'}q_1\otimes P^{(b,c)s'}_s) + (P^{(b,c)r}_{s'}q_2\otimes P^{(c,a)s'}_s) + (P^{(b,c)r}_{s'}q_3\otimes P^{(a,b)s'}_s),\\
&\Delta_{Bic}(P^{(c,a)r}_s)=\sum^n_{s'=1}(P^{(c,a)r}_{s'}q_1\otimes P^{(c,a)s'}_s) + (P^{(c,a)r}_{s'}q_2\otimes P^{(a,b)s'}_s) + (P^{(c,a)r}_{s'}q_3\otimes P^{(b,c)s'}_s).
\end{align*}

\section{\textbf{Applications}}\label{graphC*algebra}
\subsection{\textbf{Quantum symmetry of graph C* algebras:}}In the context of quantum symmetry, graph C* algebras are interesting objects to study as they are mostly infinite dimensional although function algebras associated with graphs are not. In this subsection, we will see that our notions of quantum symmetry in multigraphs lift to the level of graph C* algebras. We start by recalling the definition of a graph C* algebra associated with a multigraph $(V,E)$. For more details, see \cite{Raeburn2005}, \cite{Brannan2022}, \cite{Pask2006} and references within.
\begin{defn}\label{graph_C*_algebra_def}
	For a finite multigraph $\Gamma=(V,E)$ the graph C* algebra $C^*(\Gamma)$ is the universal C*algebra generated by a set of partial isometries $\{s_\tau|\tau\in E\}$ and a set of mutually orthogonal projections $\{p_i|i\in V\}$ satisfying the following relations among them:
	\begin{enumerate}
		\item $s^*_\tau s_\tau=p_{t(\tau)}$ for all $\tau\in E$ where $t:E\rightarrow V$ is the target map of  $\Gamma$.
		\item $\sum_{\tau\in E^i}s_\tau s^*_\tau=p_i$ for all $i\in V^s$ where $V^s$ is the set of initial vertices in $\Gamma$.
	\end{enumerate}
\end{defn}
We have the following properties of graph C* algebras (subsection 2.1 of \cite{Pask2006}).
\begin{enumerate}
	\item $\sum_{i\in V}p_i=1$ in $C^*(\Gamma)$.
	\item For any $i\in V^s$, $\{s_{\tau}s^*_{\tau}|\tau\in E^i\}$ is a set of mutually orthogonal projections and $s^*_{\tau_1}s_{\tau_2}=0$ for all $\tau_1\neq \tau_2\in E$.
\end{enumerate}
We will be generalising the main result in \cite{Schmidt2018} in our framework of quantum symmetry in multigraphs using similar arguments.
\begin{thm}
	Let $\Gamma=(V,E)$ be a multigraph and $\beta$ be a co-action of a CQG $(\mathcal{A},\Delta)$ on $(V,E)$ preserving its quantum symmetry in Banica's sense (see definition \ref{maindef}). Then $\beta$ induces a co-action  $\beta':C^*(\Gamma)\rightarrow C^*(\Gamma)\otimes\mathcal{A}$ satisfying,
	\begin{align*}
		\beta'(p_i)&=\sum_{k\in V}p_{k}\otimes q^{k}_{i},\\
		\beta'(s_\tau)&=\sum_{\sigma\in E}s_{\sigma}\otimes u^{\sigma}_{\tau}
	\end{align*}
	where $(u^{\sigma}_{\tau})_{\sigma,\tau\in E}$ and $(q^{k}_i)_{k,i\in V}$ are the co-representation matrices of $\beta$ and its induced co-action $\alpha$ on $C(V)$.
	
\end{thm}
\begin{proof}
	For $\tau\in E,i\in V$, let us define $S_\tau, P_v\in C^*(\Gamma)\otimes \mathcal{A}$ by
	\begin{align*}
		S_\tau=\sum_{\sigma\in E}s_{\sigma}\otimes u^{\sigma}_{\tau}\quad\text{and}\quad
		P_i=\sum_{k\in V}p_{k}\otimes q^{k}_{i}.
	\end{align*}
	For $i, j\in V$, we observe that,
	\begin{equation*}
		P_i P_j=\sum_{k\in V}p_k\otimes q^k_i q^k_j=\delta_{i,j}\sum_{k\in V}p_k\otimes q^k_i=\delta_{i,j}P_{i}.
	\end{equation*}
	Hence $\{P_i|i\in V\}$ is a set of mutually orthogonal projections in $C^*(\Gamma)\otimes \mathcal{A}$. Using properties of $C^*(\Gamma)$, we observe that, for $\tau\in E$,
	\begin{align*}
		  	S^*_{\tau}S_{\tau}=\sum_{\sigma_1,\sigma_2\in E}s^*_{\sigma_1}s_{\sigma_2}\otimes u^{\sigma_1*}_{\tau}u^{\sigma_2}_{\tau}
		=\sum_{\sigma\in E}s^*_{\sigma}s_{\sigma}\otimes u^{\sigma*}_{\tau}u^{\sigma}_{\tau}
		&=\sum_{\sigma\in E}p_{t(\sigma)}\otimes u^{\sigma*}_{\tau}u^{\sigma}_{\tau}\\
		&=\sum_{k\in V^t}p_k\otimes \sum_{\sigma\in E_k}u^{\sigma*}_{\tau}u^{\sigma}_{\tau}\\
		&=\sum_{k\in V}p_k\otimes q^k_{t(\tau)}=P_{t(\tau)}.
  \end{align*}
  For $i\in V^s$, it further follows that,
  \begin{align*}
		 \sum_{\tau\in E^i}S_{\tau}S^*_{\tau}=\sum_{\sigma_1,\sigma_2\in E}s_{\sigma_1}s^*_{\sigma_2}\otimes \sum_{\tau\in E^i}u^{\sigma_1}_{\tau}u^{\sigma_2*}_{\tau}
		&=\sum_{\sigma_1,\sigma_2\in E}s_{\sigma_1}s^*_{\sigma_2}\otimes \delta_{\sigma_1,\sigma_2}q^{s(\sigma_1)}_i\\
		&=\sum_{k\in V^s}(\sum_{\sigma\in E^k}s_{\sigma}s^*_{\sigma})\otimes q^k_i\\
		&=\sum_{k\in V}p_k\otimes q^k_i=P_i\label{src_graphC*algebra_rel}.
	\end{align*}
    By universality of $C^*(\Gamma)$, there exists a C* algebra homomorphism $\beta':C^*(\Gamma)\rightarrow C^*(\Gamma)\otimes\mathcal{A}$ such that,
	\begin{equation*}
		\beta'(s_{\tau})=S_{\tau}\quad \text{and} \quad \beta'(p_i)=P_i
	\end{equation*}
	for all $\tau\in E$ and $i\in V$. It remains to show that $\beta'$ is in fact a co-action of $(\mathcal{A},\Delta)$ on $C^*(\Gamma)$. The co-product identity holds as it is easy to check that on the set of generators of $C^*(\Gamma)$. Let us define
	\begin{equation*}
		\mathcal{S}=\text{linear span}\:\beta'(C^*(\Gamma))(1\otimes \mathcal{A})\subseteq C^*(\Gamma)\otimes \mathcal{A}.
	\end{equation*}
	To conclude that $\beta'$ is a co-action, it is enough to show that $\mathcal{S}$ is norm-dense in $C^*(\Gamma)\otimes\mathcal{A}$. We proceed through  following claims:\par
	\textbf{Claim 1:} $p_i\otimes 1,s_\tau\otimes 1,s^*_\tau\otimes 1\in \mathcal{S}$ for all $i\in V,\tau\in E$.\par
	Let $i\in V,\tau\in E$.  We observe that,
	\begin{align*}
		\sum_{j\in V}\beta'(p_j)(1\otimes q^i_j)&=\sum_{l\in V}p_l\otimes (\sum_{j\in V}q^l_jq^i_j)=p_i\otimes \sum_{j\in V}q^i_j=p_i\otimes 1,\\
		\sum_{\sigma\in E}\beta'(s_\sigma)(1\otimes u^{\tau*}_{\sigma})&=\sum_{\sigma'\in E}s_{\sigma'}\otimes (\sum_{\sigma\in E}u^{\sigma'}_{\sigma}u^{\tau*}_{\sigma})=s_{\tau}\otimes 1,\\
		\sum_{\sigma\in E}\beta'(s^*_\sigma)(1\otimes u^{\tau}_{\sigma})&=\sum_{\sigma'\in E}s^*_{\sigma'}\otimes (\sum_{\sigma\in E}u^{\sigma'*}_{\sigma}u^{\tau}_{\sigma})=s^*_{\tau}\otimes 1.\\
	\end{align*}
	In the above computation we have used the fact that $\beta$ and $\overline{\beta}$ both are unitary co-representations on $L^2(E)$.
	As all the elements mentioned in the left are in $\mathcal{S}$, claim 1 follows.
	\\
	\\
	\textbf{Claim 2:} If $x\otimes 1,y\otimes 1\in \mathcal{S}$, then $xy\otimes 1\in \mathcal{S}$.\par
	Let us assume that,
	\begin{equation*}
		x\otimes 1=\sum_{i=1}^n\beta'(e_i)(1\otimes f_i)\quad \text{and} \quad y\otimes 1=\sum_{j=1}^m\beta'(g_j)(1\otimes h_j)
	\end{equation*}
	where $e_i,g_j\in C^*(\Gamma)$ and $f_i,h_j\in\mathcal{A}$ for all $i,j$. We observe that,
	\begin{align*}
		xy\otimes 1=\sum_i\beta'(e_i)(1\otimes f_i)(y\otimes 1)
		&=\sum_i\beta'(e_i)(y\otimes 1)(1\otimes f_i)\\
		&=\sum_{i,j}\beta'(e_i)\beta'(g_j)(1\otimes g_j)(1\otimes f_i)\\
		&=\sum_{i,j}\beta'(e_ig_j)(1\otimes g_jf_i)\:\:\in\mathcal{S}.
	\end{align*}
	Hence claim 2 follows.\par
	From claim 1 and claim 2 it is clear that,
	\begin{equation*}
		C^*(\Gamma)\otimes 1 \subseteq \text{norm closure of}\:\:\mathcal{S}.
	\end{equation*}
	As for any $T\in \mathcal{S}$ and $x\in\mathcal{A}$, $T(1\otimes x)$ is also in  $\mathcal{S}$, we conclude that,
	\begin{equation*}
		C^*(\Gamma)\otimes \mathcal{A} \subseteq \text{norm closure of}\:\:\mathcal{S}.
	\end{equation*}
	Hence our theorem is proved.
\end{proof}

\subsection{\textbf{Quantum symmetry in undirected multigraphs:}}\label{quan_sym_undir_mult}
An undirected multigraph consists of an edge set $E$, a set of vertices $V$ and a range map $r:E\rightarrow \{\{i,j\}\:|\:i,j\in V\}$ where $\{.,.\}$ is an unordered pair of vertices. In our context we describe undirected multigraph as a  ``doubly directed" multigraph with an inversion map which identifies two oppositely directed edges to produce an undirected edge. 
\begin{defn}
    A ``doubly directed" multigraph $(V,E)$ is a multigraph whose adjacency matrix is symmetric, that is, $|E^i_j|=|E^j_i|$ for all $i,j\in V$. An undirected multigraph $(V,E,j)$ is a ``doubly directed" $(V,E)$ with an inversion map $j:E\rightarrow E$ satisfying the following conditions: 
    \begin{enumerate}
        \item $j^2=id_E$.
        \item For all $\sigma\in E$, $j(\sigma)=\sigma$ if  $s(\sigma)=t(\sigma)$. 
        \item For all $\sigma\in E$ $$\quad s(j(\sigma))=t(\sigma)\quad\text{and}\quad t(j(\sigma))=s(\sigma).$$
    \end{enumerate}
\end{defn}
The inversion map $j$ in an undirected multigraph $(V,E,j)$ induces a linear map $J:L^2(E)\rightarrow L^2(E)$ on vector space level. Any quantum symmetry preserving co-action $\beta$ on an undirected multigraph $(V,E,j)$ can be described as a quantum symmetry preserving co-action on the doubly directed multigraph $(V,E)$ such that the linear map  $J$ intertwines $\beta$ and its contragradient $\overline{\beta}$, that is, 
\begin{equation*}
   \beta\circ J=(J\otimes id)\circ\overline{\beta} 
\end{equation*}
It is also enough to start with only a unitary co-representation instead of a bi-unitary one because any unitary satisfying the intertwinement condition mentioned above is essentially a bi-unitary map. We encourage the reader to look into section 4.6 of \cite{asfaq2023thesis} for more discussions about quantum symmetry of undirected multigraphs. In  theorem 4.6.11 of \cite{asfaq2023thesis} we have shown that a quantum symmetry preserving co-action on a directed multigraph do arise from a co-action on the underlying undirected multigraph preserving the set of ``directed" edges. This is also a classical phenomena which demonstrates consistency between our different constructions in the quantum case.

\bibliographystyle{alphaurl}
\bibliography{quantum_symmetry_in_multigraphs}
\end{document}